\documentclass[a4paper]{article}
\usepackage{geometry}
\geometry{left=2.5cm,right=2.5cm,top=2.5cm,bottom=2.5cm}
\usepackage{amssymb}
\usepackage{latexsym}
\usepackage{amsmath}
\usepackage{indentfirst}
\usepackage{graphicx}
\usepackage{enumerate}
\usepackage[colorlinks=true]{hyperref}
\usepackage{cancel}
\usepackage{mathrsfs} 

\newtheorem{Thm}{Theorem}[section]
\newtheorem{Cor}{Corollary}[section]
\newtheorem{Lem}{Lemma}[section]
\newtheorem{Pro}{Proposition}[section]

\newtheorem{Rek}{Remark}[section]
\newtheorem{Def}{Definition}[section]
\newcommand{\N}{\mathbb{N}}

\newcommand{\R}{\mathbb{R}}

\numberwithin{equation}{section} \numberwithin{figure}{section}

\newenvironment{proof}{\medskip\par\noindent{\bf Proof\/}:}{\qquad
\raisebox{-0.5mm}{\rule{1.5mm}{1mm}}\vspace{6pt}}

\begin{document}
\title{\Large\bf Normalized Solutions to the mixed fractional Schr\"odinger equations with potential and general nonlinear term}
\author{
{~~~~~~Anouar Bahrouni$^{1}$}\thanks{E-mail address: bahrounianouar@yahoo.fr}    {~~~~~~Qi Guo$^{2}$}\thanks{E-mail address: guoqi115@mails.ucas.ac.cn}   {~~~~~~Hichem Hajaiej$^{3}$}\thanks{Corresponding author: hhajaie@calstatela.edu}\\
\small $^1\,$Mathematics Department, University of Monastir,
Faculty of Sciences, 5019 Monastir, Tunisia \\
\small  $^2\,$School of Mathematics, Renmin University of China, Beijing 100086, P. R. China
\\
\small $^3\,$Department of Mathematics, California State University, Los Angeles,\\
\small 5151 State University Drive,
Los Angeles, CA 90032, USA\\
}

\date{} \maketitle
%
{\small  \noindent{\bf Abstract:} The purpose of this paper is to
establish the existence of solutions with prescribed norm to a class
of nonlinear equations involving the mixed fractional Laplacians.
This type of equations arises in various fields ranging from
biophysics to population dynamics. Due to the importance of these
applications, this topic has very recently received an increasing
interest. This work extends the results obtained in \cite{ding,
hichem}
 to the mixed fractional Laplacians. Our method is novel and our results cover all the previous ones.\\
{\bf Keywords:} Normalized solutions, Mixed fractional Laplacians, General potentials.\\
{\bf 2010 Mathematics Subject Classifications}: 35J50; 35Q55;
35Q41.}
\section{Introduction}
This paper concerns the existence of solutions $(u_a,\lambda_a)\in H^{s_1,s_2}(\mathbb{R}^d)\times \mathbb{R}$ to the
following fractional equation
\begin{align}\label{maineq}
\begin{cases}
&(-\Delta)^{s_1}u(x)+(-\Delta)^{s_2}u(x)+\lambda u(x)+V(x)u(x)=g(u(x)),\  x\in \mathbb{R}^{d},\\
&\int_{\mathbb{R}^d} |u(x)|^2dx=a,
\end{cases}
\end{align}
where $0<s_1<s_2<1$,  $2s_1<d<\frac{2s_1s_2}{s_2-s_1}$, $a>0$. The precise condition on $g$ and $V$ will be given
later on. The fractional Laplacian is given by
$$(-\Delta)^{s_i}u(x)=C_{d,s_{i}}\displaystyle \int_{\mathbb{R}^{d}}\frac{u(x)-u(y)}{|x-y|^{d+2s_i}}dy, \ \ i=1,2,$$
with
$C_{d,s_{i}}:=2^{2s_i}\pi^{-\frac{d}{2}}s_i\frac{\Gamma(\frac{d+2s_i}{2})}{\Gamma(1-s_i)}$,
where $\Gamma$ is the Gamma function, see \cite{gamma}.

Our interest in this problem results from \cite{hichem}. The authors
were interested in finding normalized solutions for a class of mixed
fractional equations with simplified potentials. Equation
\eqref{maineq} arises in the superposition of two stochastic process
with a different random walk and a L\'evy flight. The associated
limit diffusion is described by a sum of two fractional Laplacian
with different orders, see \cite{Ssdipierro}. Very recently,
Dipierro et al. in \cite{Sdipierro} tackled another interesting
problem related to \eqref{maineq}. In particular, it turns out that
the mixed fractional Laplacians models the population dynamics, some
heart anomalics caused by artries issues. Those heart problems can
be modeled thanks to the superposition of two to five mixed
fractional Laplacians since it is not necessarily the same anomaly
in the five artries, see \cite{Elshahed}. Equation \eqref{maineq}
also plays a crucial role in other fields, i.e. chemical reaction
design, plasma physics, biophysics, see \cite{Aris, Dipierro,
Wilhelmsson}. The $L^2-$norm is a preserved quantity of the
solutions of the dynamic version of \eqref{maineq}. Moreover, the
normalized solutions are known to provide stable solutions. This is
very attractive for applications and lead many research groups to
focus on this area. The study of fractional problems has gained a
lot of interest after the
    pioneering papers  of Caffarelli et al.
    \cite{Caf1,Caf2,Caf3}. Inspired with
    this, several other works have been published in the nonlocal
    framework, see for instance, \cite{Bahrouni,Radulescu,Bucur,Dipierro,valdinoci}.


There are two different ways to deal with equation \eqref{maineq}
according to the role of $\lambda$:\\
(i) the frequency $\lambda$ is a fixed and assigned parameter;
\\
(ii) the frequency $\lambda$ is considered as an unknown in the problem.\\
For case (i), one can apply variational methods looking for critical
points of the following functional
$$I(u)=\frac{1}{2}\int_{\R^d}|\nabla_{s_1}u|^{2}dx+\frac{1}{2}\int_{\R^d}|\nabla_{s_2}u|^{2}dx+\frac{\lambda}{2}\int_{\R^d}V(x)u^2 dx-\int_{\R^d} G(u)dx,$$
where $G(s):=\int_{0}^{s}g(t)dt,$ for $s\in \R$. Also, we can take
advantage of some other topological methods such as the fixed point
theory, bifurcation or the Lyapunov-Schmidt reduction. A large
number of papers are devoted to study of this kind of problems and
it is impossible to summarize it here since the related literatures
are huge. Alternatively, one can search for solutions to equation
\eqref{maineq} with the frequency $\lambda$ unknown. In this case,
the real parameter $\lambda$ appears as a Lagrange multiplier and
$L^2-$norm of solutions are prescribed.  They are usually called
normalized solutions. Let us introduce and review some important
works in this direction that we are going to use in some way in this
work.\\

In \cite{jeanjean}, Jeanjean  studied the existence  of solutions
with prescribed norm in the framework of semilinear elliptic
equations. In particular, he studied equation \eqref{maineq} where
$s_1=s_2=1$ and $V=0$. In order to overcome the lack of compactness
the author worked in the radial Sobolev space to get some
compactness results. Moreover, he considered the nonradial case by
using a characterization of the Palais-Smale sequence introduced in
\cite{jeanjean2019}. However, he did not work directly with the
functional associated to the problem. In his approach, he considered
a new modified functional to simplify the obtention of Palais-Smale
sequence at a suitable mountain pass level. His ideas were used by
many other papers, but they don't apply to our complex situation.
Further details will be pointed later.

We recall that the number $\overline{p}:=2+\frac{4}{d}$ is called in
the literature as $L^2-$critical exponent, which comes from the
Gagliardo-Nirenberg inequality , see \cite{boulenger}. It is worth
to mention that in \cite{Soave1}, Soave studied the existence of
normalized solutions for the $L^2-$subcritical (that is $q\in
(2,\overline{p}))$ nonlinear Schr\"odinger equation with combined
power nonlinearities of the type
$$g(t)=\mu |t|^{q-2}t+|t|^{p-2}t, \ \ \mbox{with} \ \ \mu>0.$$
In \cite{jeanjean1}, the authors discussed the existence of ground
state normalized solution for the $L^2-$subcritical case while in
\cite{jeanjean3} a multiplicity result is established such that the
second solution is not a ground state. We also refer to
\cite{bartsch,soave2,yang} for the existence of normalized solutions
in the $L^2-$supercritical case. In \cite{bartsh1}, the authors
studied the existence of infinitely many normalized solutions.

When $s_1=s_2=1:$ For the potential case, that is $V\neq 0$,
Pellacci et al. \cite{Pellacci} applied Lyapunov-Schmidt reduction
approach to study problem \eqref{maineq} for the special case where
$g(u)=u^{p}$. In \cite{Ikoma}, Ikoma et al treated the potential
case with general nonlinearity. The case of positive potential and
vanishing at infinity is considered in the very recent paper
\cite{bartsch2}. In such a case, the mountain pass structure
introduced in \cite{jeanjean} is destroyed. By constructing a
suitable linking structure, the authors proved the existence of
normalized solutions with high Morse index. The case of negative
potential is considered in \cite{Molle} with the particular
nonlinearity $g(u)=|u|^p$. Very recently, in \cite{ding} Ding et al.
treated the case of negative potential and a more general
nonlinearity. In \cite{yang2}, Yang et al. studied the existence and
multiplicity of normalized solutions to the Schr\"odinger equations
with potentials and non-autonomous nonlinearities. For important
contributions to the study of normalized solutions we refer to the
works of Hajaiej and Stuart \cite{hs1,hs2}.

Contrary to the local case, the situation seems to be in a
developing state for the fractional Laplacian, see
\cite{Luo,Radulescunor}. It is worth mentioning that there is only
one paper devoted to the study of normalized solution for nonlinear
equations in which the mixed fractional Laplacians is present. In
\cite{hichem}, Hajaiej et al considered the following equation
$$ (-\Delta)^{s_1}u(x)+(-\Delta)^{s_2}u(x)+\lambda u(x)=|u|^{p}u,\  x\in \mathbb{R}^{d} $$
where $s_1<s_2$, $p,d>0$ and $\lambda\in \R.$ The authors discussed
the existence and nonexistence of normalized solution for the above
problem.

The novelty of our work comes from the presence of a general
potential and nonlinearity in Eq. \eqref{maineq}.  This is not an
easy situation to deal with for the mixed fractional Laplacians. To
the best of our knowledge, this is the first paper proving the
existence of normalized solution in this very general context. More
precisely, the main difficulties that arise in treating this problem
are the following: (i) the lack of compactness due to the
unboundedness of the domain; (ii) the absence of the fundamental
properties of the mixed fractional Laplacians such us the
classification of the Palais Smale sequences; (iii) the presence of
the nonradial potential $V$ which forbid the use of the principle of
symmetric criticality and perturb the arguments employed for the
nonpotential case. To avoid the two first difficulties, we prove a
new theorem in which we give a classification of the Palais
sequences (see Theorem \ref{Palais}).
The main key to overcome the third difficulty is to exploit some
properties of the functional associated to equation \eqref{maineq}
with $V=0$. In particular, we will show that there exists $u\in
H^{s_1,s_2}(\mathbb{R}^{d})$, such that
$$m_{a}=\displaystyle \inf_{v\in \mathcal{P}_{\infty,a}}I(v)=I(u),$$
where $I$ and $\mathcal{P}_{\infty,a}$ are defined in Section $3$.
There are two ways to
 prove that $m_a$ is attained; The first idea to establish this result is to work on the radial
 space to get some compactness properties (see Subsection $3.2$). However, the fact that $m_a$ is attained for the
 radial space is not enough to prove the existence of normalized
 solution for problem \eqref{maineq} with potential, that is, $V\neq 0$. For this
 reason, we will give a new methods which combine some new technical lemmas with the argument employed by
 Jeanjean in
 \cite{jeanjean}. 
 \medskip

 For the reader's convenience, we now state
all the
conditions on $g$ and $V$:
\begin{itemize}
        \item[$(G_1)$] $g:\mathbb{R}\rightarrow \mathbb{R}$ is
        continuous and odd.
        \item[$(G_2)$] 
            We assume that there exits  $(\alpha,\beta)\in \mathbb{R}_{+}^{2}$
        satisfying
        $$2+\frac{4s_2}{d}<\alpha<\beta<\frac{2d}{d-2s_1},$$
        such that
        $$\alpha G(s)\leq g(s)s\leq \beta G(s), \ \ \forall s\in \mathbb{R},\ \ \mbox{with} \ \ G(s)=\displaystyle \int_{0}^{s}g(t)dt.$$
\item[$(G_3)$] Let $\widetilde{G}:\mathbb{R}\rightarrow \mathbb{R},$
$\widetilde{G}(s)=\frac{g(s)s}{2}-G(s)$. We assume that $\widetilde{G}^{'}$
exists and
$$\widetilde{G}^{'}(s)s\geq \alpha \widetilde{G}(s), \ \ \forall s\in \mathbb{R},$$
where $\alpha$ is given by $(G_2)$.
\item[$(V_1)$]   $\lim\limits_{|x|\rightarrow +\infty} V(x)=\sup\limits_{x\in \mathbb{R}^d}V(x)=0$ and there exists some
$\sigma_{1}\in \left[0,\frac{d(\alpha-2)-4}{d(\alpha-2)}\right]$ such that
$$\displaystyle \left|\int_{\mathbb{R}^{d}}V(x)u^{2}dx\right|\leq \sigma_1\left(|\nabla_{s_1}u|_{2}^{2}+|\nabla_{s_2}u|_{2}^{2}\right), \ \ \forall u\in H^{s_1,s_2}(\mathbb{R}^{d}).$$

\item[$(V_2)$] $\nabla V(x)$ exists for a.e $x\in \mathbb{R}^{d}$.
Set $W(x)=\frac{1}{2}\langle\nabla V(x),x\rangle$. We assume that
$\lim\limits_{|x|\rightarrow +\infty} W(x)=0$, and there exists a
positive constant $\sigma_{2}$ such that
$$0<\sigma_{2}<\min\left\{s_1-\frac{(\beta-2)d}{2\beta},\frac{d(\alpha-2)(1-\sigma_1)}{4}-s_2\right\}$$ such that $$\left|\int_{\mathbb{R}^{d}}W(x)u^{2}dx\right|\leq
\sigma_2\left(|\nabla_{s_1}u|_{2}^{2}+|\nabla_{s_2}u|_{2}^{2}\right),
\ \ \forall u\in H^{s_1,s_2}(\mathbb{R}^{d}),$$ where $\alpha,
\beta$ and $\sigma_1$ are defined in $(G_1)$ and $(V_1)$.\\
\item[$(V_3)$] $\nabla W(x)$ exists for a.e $x\in \mathbb{R}^{d}$.
Put $Y(x)=(d\alpha/2-d-1)W(x)+\langle\nabla
W(x),x\rangle.$ We suppose that, there exists some $\sigma_3\in
[0,s_{1}^{2}(d\alpha/2-d-2s_2)]$ such that
$$\left|\int_{\mathbb{R}^{d}}Y(x)u^{2}dx\right|\leq
\sigma_3\left(|\nabla_{s_1}u|_{2}^{2}+|\nabla_{s_2}u|_{2}^{2}\right), \ \
\forall u\in H^{s_1,s_2}(\mathbb{R}^{d}).$$
\end{itemize}
As a consequence of the above assumptions, we can deduce the
following two remarks, see \cite{jeanjean, yang}.
\begin{Rek}\label{rem}
$(1)$ It immediately follows from $(G_1)$ and $(G_2)$ that, for all
$t\in \mathbb{R}$ and $s\geq 0$,
\begin{equation*}
\begin{cases}
s^{\beta}G(t)\leq G(ts)\leq s^{\alpha}G(t), \ \ s\leq 1,\\
s^{\alpha}G(t)\leq G(ts)\leq s^{\beta}G(t), \ \ s\geq 1.
\end{cases}
\end{equation*}
$(2)$ There exist some $C_1,C_2>0$ such that, for all $s\in
\mathbb{R}$,
\begin{equation*}
\begin{cases}
C_1\min(|s|^{\alpha},|s|^{\beta})\leq G(s)\leq C_2
\max(|s|^{\alpha},|s|^{\beta})\leq C_2(|s|^{\alpha}+|s|^{\beta}),\\
(\frac{\alpha}{2}-1)G(s)\leq \widetilde{G}(s)\leq
(\frac{\beta}{2}-1)G(s)\leq (\frac{\beta}{2}-1) C_2
(|s|^{\alpha}+|s|^{\beta}).
\end{cases}
\end{equation*}
\end{Rek}
\begin{Rek}
$(1)$ Set $g(t)=|t|^{p-2}t+|t|^{q-2}t$, for $t\in \mathbb{R}$, such
that
$$2+\frac{4s_2}{d}<p<q<\frac{2d}{d-2s_1}.$$ A trivial verification
shows that
$(G_1)-(G_3)$ are satisfied. \\
$(2)$ By Sobolev inequality, under some small conditions on
$|V|_{\frac{N}{2}}, |W|_{\frac{N}{2}}$ and $|Y|_{\frac{N}{2}}$, one
can easily check that $(V_1)-(V_3)$ are satisfied.
\end{Rek}
Our main result reads as follows.
\begin{Thm}\label{mainnthm}
Suppose that assumptions $(G_1)-(G_3)$ and $(V_1)-(V_3)$ are
satisfied.  Then\\
$(1)$ Problem \eqref{maineq} has no nontrivial solution $u\in
H^{s_1,s_2}(\R^d)$ provided that $\lambda \leq 0$.\\
$(2)$
For any $a>0$, there
exists a couple $(\lambda_a,u_a)\in \mathbb{R}^+\times
H^{s_1,s_2}(\R^d)$ solves \eqref{maineq}.
\end{Thm}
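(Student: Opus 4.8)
\medskip
The plan is to derive the two statements from a pair of integral identities valid for any weak solution $u$ of \eqref{maineq}. Testing the equation against $u$ gives
$$|\nabla_{s_1}u|_2^2+|\nabla_{s_2}u|_2^2+\lambda|u|_2^2+\int_{\R^d}V(x)u^2\,dx=\int_{\R^d}g(u)u\,dx,$$
while the Pohozaev identity, obtained by differentiating the full energy $E(u_t)$ at $t=1$ along the $L^2$-preserving dilation $u_t(x)=t^{d/2}u(tx)$ (for which $|\nabla_{s_i}u_t|_2^2=t^{2s_i}|\nabla_{s_i}u|_2^2$ and $|u_t|_2=|u|_2$), reads
$$s_1|\nabla_{s_1}u|_2^2+s_2|\nabla_{s_2}u|_2^2-\int_{\R^d}W(x)u^2\,dx=d\int_{\R^d}\widetilde{G}(u)\,dx,$$
with $W$ and $\widetilde G$ as in $(V_2)$ and $(G_3)$. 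Here $E(u)=\tfrac12|\nabla_{s_1}u|_2^2+\tfrac12|\nabla_{s_2}u|_2^2+\tfrac12\int_{\R^d}Vu^2-\int_{\R^d}G(u)$ is the full energy and $I$ its restriction to $V=0$.

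For part (1), set $\mu_\beta:=\frac{(\beta-2)d}{2\beta}$. The bound $g(s)s\le\beta G(s)$ of $(G_2)$ yields $d\,\widetilde G(s)\le\mu_\beta\,g(s)s$ pointwise (note $g(s)s\ge\alpha G(s)\ge0$), so combining the Pohozaev and test identities gives
$$s_1|\nabla_{s_1}u|_2^2+s_2|\nabla_{s_2}u|_2^2-\int_{\R^d}W(x)u^2\,dx\le\mu_\beta\Big(|\nabla_{s_1}u|_2^2+|\nabla_{s_2}u|_2^2+\lambda|u|_2^2+\int_{\R^d}V(x)u^2\,dx\Big).$$
The upper constraint $\beta<\frac{2d}{d-2s_1}$ in $(G_2)$ is exactly $\mu_\beta<s_1$; moreover $V\le0$ by $(V_1)$ makes $\mu_\beta\int Vu^2\le0$, and $(V_2)$ gives $|\int Wu^2|\le\sigma_2(|\nabla_{s_1}u|_2^2+|\nabla_{s_2}u|_2^2)$ with $\sigma_2<s_1-\mu_\beta$. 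Rearranging leaves
$$(s_1-\mu_\beta-\sigma_2)|\nabla_{s_1}u|_2^2+(s_2-\mu_\beta-\sigma_2)|\nabla_{s_2}u|_2^2\le\mu_\beta\lambda|u|_2^2,$$
whose left-hand side is strictly positive for $u\neq0$; since $\mu_\beta>0$ and $|u|_2^2>0$, this forces $\lambda>0$, which is the nonexistence statement.

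For part (2) I would adapt Jeanjean's constrained min-max \cite{jeanjean} to the mixed operator. Because $(G_2)$ puts the problem in the $L^2$-supercritical regime ($\alpha>2+\frac{4s_2}{d}$), $E$ is unbounded below on $S_a=\{u\in H^{s_1,s_2}(\R^d):|u|_2^2=a\}$ yet has a mountain-pass geometry. Passing to the augmented functional $(s,u)\mapsto E(s\star u)$, with $(s\star u)(x)=e^{ds/2}u(e^sx)$, on $\R\times S_a$ — for which $(G_3)$ guarantees that each fiber $s\mapsto E(s\star u)$ has a unique maximum — produces a bounded Palais–Smale sequence $\{u_n\}\subset S_a$ at the min-max level $c_a$ that in addition satisfies $P(u_n)\to0$, where $P$ is the Pohozaev functional above; its boundedness rests on the mass-supercritical gap $d(\alpha-2)>4s_2$ and the smallness built into $(V_1)$–$(V_2)$. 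In parallel I would solve the limit problem $V=0$ on the radial subspace of $H^{s_1,s_2}(\R^d)$, where the compact embeddings yield that $m_a=\inf_{v\in\mathcal{P}_{\infty,a}}I(v)$ is attained, as announced in the excerpt.

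The crux — and the step I expect to be the main obstacle — is recovering compactness of $\{u_n\}$ in the presence of the \emph{nonradial} potential, which rules out working in the radial subspace and destroys symmetric criticality. Here I would invoke the classification of Palais–Smale sequences, Theorem \ref{Palais}: after translations and passing to a subsequence, $\{u_n\}$ either converges strongly or loses mass to a nontrivial solution of the $V=0$ limit problem. Since $V\le0$ forces $E\le I$ pointwise one has $c_a\le m_a$, and testing the min-max along a path through a translated limit minimizer sharpens this to the strict gap $c_a<m_a$, which is incompatible with any escaping profile; hence $u_n\to u_a$ strongly, $u_a\in S_a$ is a nontrivial solution of \eqref{maineq} with some Lagrange multiplier $\lambda_a$, and part (1) applied to $u_a$ yields $\lambda_a>0$, producing the desired couple $(\lambda_a,u_a)\in\R^+\times H^{s_1,s_2}(\R^d)$.
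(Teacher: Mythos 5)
Your proof of part (1) is correct and follows essentially the paper's own route: the same two identities (testing against $u$, and the Pohozaev identity $P(u)=0$ of Lemma \ref{P}), combined with $V\le 0$ from $(V_1)$, the bound of $(V_2)$, and the fact that $\sigma_2<s_1-\frac{(\beta-2)d}{2\beta}$ makes the gradient coefficients positive. The paper rearranges the algebra differently (it isolates $\int_{\R^d}G(u)\,dx$ with the constant $\tau=d-(\sigma_2+\frac{d-2s_1}{2})\beta>0$ and concludes $G(u)\equiv 0$), but the ingredients and conclusion are identical.

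Part (2), however, contains a genuine gap at precisely the point the paper identifies as its main difficulty. You assert that solving the limit problem on the radial subspace ``yields that $m_a=\inf_{v\in\mathcal{P}_{\infty,a}}I(v)$ is attained.'' Radial compactness (Lemma \ref{rad}) only gives attainment of $m_{a,r}=\inf_{\mathcal{P}_{\infty,a,r}}I$ (Theorem \ref{dec}); a priori $m_{a,r}\ge m_a$, and nothing in your sketch closes this gap. This is fatal to your compactness step: any escaping profile is a nontrivial critical point of the limit functional $I_{\lambda_a}$, hence lies on $\mathcal{P}_{\infty}$ by the Pohozaev identity (Lemma \ref{caractgam}) and costs at least $m_{b}\ge m_a$ by the monotonicity of $a\mapsto m_a$ (Lemma \ref{decc}), where $b\le a$ is its mass. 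Testing your min-max with a translated \emph{radial} minimizer only produces the strict inequality $c_a<m_{a,r}$, which is perfectly compatible with $m_a\le c_a$ and therefore does not exclude the escaping profile. Obtaining the nonradial attainment of $m_a$ is exactly what the paper's Subsection 3.3 (Theorem \ref{attained}) is for, and it needs three further ingredients absent from your proposal: the mountain-pass characterization $\gamma(a)=\inf_{\mathcal{P}_{\infty,a}}I$ (Lemma \ref{gammainf}), the classification of $(PS)_c$-sequences for $I_{\lambda}$ (Theorem \ref{Palais}), and the strict subadditivity $\gamma(a)<\gamma(a_1)+\cdots+\gamma(a_k)$ (Lemma \ref{somme}). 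A secondary issue: Theorem \ref{Palais} is stated for the translation-invariant functional $I_{\lambda}$, not for $J$ with potential, so invoking it directly on your Palais--Smale sequence for the potential problem requires an adapted splitting lemma (checking that escaping pieces decouple from $V$ via $V\to 0$ at infinity), which you do not supply. The paper sidesteps this entirely: it minimizes $J$ over the Pohozaev manifold $\mathcal{P}_a$, which is a natural constraint by Lemma \ref{-} and Proposition \ref{p4.3}, and recovers compactness of the minimizing sequence not through a splitting theorem for $J$ but through the strict inequality $C_a<m_a$ (which uses the \emph{nonradial} minimizer from Theorem \ref{attained}), the continuity and strict monotonicity of $a\mapsto m_a$ (Lemmas \ref{decc}, \ref{con}), and the Br\'ezis--Lieb lemma, ruling out vanishing and partial mass loss in Claims 2 and 5 of Section 4.2.
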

Next, we consider case where the potential $V(x)$ satisfies the
following coercive-type assumptions:
\begin{itemize}
\item[$(V'_1)$] $V(x)\in C(\R^d,\R)$ with $V_0:=\inf_{x\in \R^d} V(x)>0$.
\item [$(V'_2)$] For every $M>0$,
 \[\mu(\{x\in \R^d: V(x)\leq M\})<\infty,\]
where $\mu$ denotes the Lebesgue measure in $\R^d$.
\end{itemize}
Then, we give our second main result.
\begin{Thm}\label{mainnthm2}
 Suppose that assumptions $(G_1)-(G_3)$ and $(V'_1)-(V'_2)$ are satisfied.  Then for any $a>0$, there exists a couple $(\lambda_a,u_a)\in\mathbb{R}^+\times H^{s_1,s_2}(\R^d)$ solves \eqref{maineq}.
\end{Thm}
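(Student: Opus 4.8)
The guiding principle is that, unlike in Theorem~\ref{mainnthm}, assumptions $(V'_1)$--$(V'_2)$ make the potential coercive, and coercivity is precisely what restores the compactness that was missing in the vanishing-potential case. Consequently the delicate classification of Palais--Smale sequences (Theorem~\ref{Palais}) can be bypassed. I would first fix the energy space
$$E:=\Big\{u\in H^{s_1,s_2}(\R^d):\ \int_{\R^d}V(x)u^2\,dx<\infty\Big\},\qquad \|u\|_E^2:=|\nabla_{s_1}u|_2^2+|\nabla_{s_2}u|_2^2+\int_{\R^d}V(x)u^2\,dx,$$
which by $(V'_1)$ is a Hilbert space with $\|u\|_E^2\ge |\nabla_{s_1}u|_2^2+|\nabla_{s_2}u|_2^2+V_0\int_{\R^d}u^2\,dx$, hence continuously embedded in $H^{s_1,s_2}(\R^d)$. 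The crucial preliminary step is a compactness lemma: under $(V'_1)$--$(V'_2)$ the embedding $E\hookrightarrow L^p(\R^d)$ is compact for every $p\in[2,\tfrac{2d}{d-2s_1})$. I would prove it by the Bartsch--Wang argument: a bounded sequence in $E$ is bounded in $H^{s_1,s_2}(\R^d)$ and hence converges strongly in $L^p_{loc}$ by the fractional Rellich theorem, while $(V'_2)$ controls the mass at infinity, since on $\{V>M\}$ one has $\int_{\{V>M\}}u^2\le \tfrac1M\int_{\R^d} V u^2\le \tfrac1M\|u\|_E^2$ and $\{V\le M\}$ has finite measure; interpolating between $L^2$ and $L^{2d/(d-2s_1)}$ then yields compactness over the whole range.

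With compactness in hand, the variational scheme is the one already developed for Theorem~\ref{mainnthm}, now running on $E$ and $S_a:=\{u\in E:\ \int_{\R^d}u^2\,dx=a\}$ with
$$I(u)=\tfrac12|\nabla_{s_1}u|_2^2+\tfrac12|\nabla_{s_2}u|_2^2+\tfrac12\int_{\R^d}V(x)u^2\,dx-\int_{\R^d}G(u)\,dx.$$
Because $(G_2)$ forces $\alpha>2+\tfrac{4s_2}{d}$, the nonlinearity is $L^2$-supercritical and $I$ is unbounded below on $S_a$, so minimization is hopeless; instead I would use the $L^2$-preserving dilation $\mathcal H(\tau)u(x):=\tau^{d/2}u(\tau x)$. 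Along it the kinetic terms scale as $\tau^{2s_1},\tau^{2s_2}$; for a fixed compactly supported $u$ the potential term $\tfrac12\int_{\R^d}V(y/\tau)u^2\,dy$ stays bounded (as $\tau\to\infty$ its argument concentrates near the origin, where $V$ is finite); and by Remark~\ref{rem} the nonlinear term behaves like $\tau^{d(\alpha-2)/2}$ with $\tfrac{d(\alpha-2)}2>2s_2$, so $I(\mathcal H(\tau)u)\to-\infty$ as $\tau\to\infty$ while $I(\mathcal H(\tau)u)>0$ for $\tau$ small. This produces a mountain-pass geometry, and applying Jeanjean's augmented functional on $\R\times S_a$ yields a Palais--Smale sequence $(u_n)\subset S_a$ at a level $c_a>0$ that additionally satisfies the asymptotic Pohozaev relation coming from $\partial_\tau I(\mathcal H(\tau)u_n)\big|_{\tau=1}\to0$.

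Next I would prove that $(u_n)$ is bounded in $E$. The lower Ambrosetti--Rabinowitz-type bound $g(s)s\ge\alpha G(s)$ of $(G_2)$, together with $\alpha>2$, makes the combination $I(u_n)-\tfrac1\alpha\langle I'(u_n),u_n\rangle$ control $\big(\tfrac12-\tfrac1\alpha\big)\|u_n\|_E^2$ up to the Lagrange-multiplier term, which is in turn pinned down by the asymptotic Pohozaev relation and by $(G_3)$ (the latter being exactly what guarantees that the fibre map $\tau\mapsto I(\mathcal H(\tau)u)$ has a single critical point and that $\widetilde G\ge0$); positivity $V\ge V_0>0$ ensures $\|\cdot\|_E$ dominates both seminorms and $\int_{\R^d}u^2$. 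Once $\|u_n\|_E$ is bounded, the compactness lemma upgrades the weak limit $u_n\rightharpoonup u_a$ to strong convergence in $L^p$ for $p\in(2,\tfrac{2d}{d-2s_1})$; in particular $\int_{\R^d}u_n^2\to\int_{\R^d}u_a^2=a$, so no mass escapes to infinity and $u_a\in S_a$. This last point is precisely the step that fails when $V$ vanishes at infinity, and it is where the coercive hypotheses are indispensable.

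Finally I would pass to the limit in the Euler--Lagrange equation $I'(u_n)-\lambda_n u_n\to0$ in $E^\ast$. Strong convergence gives $I'(u_a)=\lambda_a u_a$ with $\lambda_n\to\lambda_a$, so that $(\lambda_a,u_a)$ solves \eqref{maineq}. It remains to check $\lambda_a>0$: combining the identity $\langle I'(u_a),u_a\rangle=\lambda_a a$ with the Pohozaev identity for $u_a$ and the growth control $g(s)s\ge\alpha G(s)$, $\alpha>2+\tfrac{4s_2}{d}$, forces $\lambda_a>0$, by the same computation that yields the nonexistence of solutions for $\lambda\le0$. The main obstacle throughout is the boundedness of the Palais--Smale sequence together with the preservation of the constraint $\int_{\R^d}u^2=a$ in the limit; both are resolved by the compact embedding furnished by $(V'_1)$--$(V'_2)$, which is the heart of the coercive case.
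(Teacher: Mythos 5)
Your starting point coincides with the paper's: your compactness lemma is exactly Lemma~\ref{cptemb} (compact embedding of the weighted space into $L^p$). After that, however, you and the paper part ways completely, and the paper uses none of your machinery: it considers the \emph{global} minimization problem $E_a=\inf\{J(u):u\in S_a\}$ in $H_V^{s_1,s_2}(\R^d)$, asserts that a minimizing sequence is bounded, and passes to the limit with the compact embedding, identifying the Lagrange multiplier at the end. Your objection that minimization is ``hopeless'' is in fact well taken: if $u\in S_a$ is smooth with support in a ball $B_R$, then for $t\ge 1$ the dilation $t*u$ is supported in the same ball, so $\frac12\int_{\R^d}V(x)|t*u|^2dx\le \frac a2\max_{\overline{B}_R}V$, while by Remark~\ref{rem} the nonlinear term grows like $t^{d(\alpha-2)/2}$ with $d(\alpha-2)/2>2s_2$; hence $J(t*u)\to-\infty$ and $E_a=-\infty$. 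So your route is genuinely different from the paper's, and your observation actually exposes a serious difficulty in the paper's own argument (its Claim~1 presupposes a finite infimum).

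The problem is that your replacement has a gap of its own, located precisely at the sentence ``this produces a mountain-pass geometry.'' In the coercive case there is no low-energy endpoint on the small-dilation side: as $\tau\to 0^+$, assumption $(V_2')$ gives $\mu\bigl(\{y:V(y/\tau)\le M\}\bigr)=\tau^d\,\mu\bigl(\{V\le M\}\bigr)\to 0$, so $\int_{\R^d}V(y/\tau)u^2dy\ge M\bigl(a-o(1)\bigr)$ for every $M$, i.e. $I(\mathcal H(\tau)u)\to+\infty$, not to $0$ as in Lemma~\ref{mg1} (which the paper proves only for $V\equiv 0$). This is structural rather than an artifact of the fibre map: if $u\in S_a$ satisfies $|\nabla_{s_1}u|_2^2+|\nabla_{s_2}u|_2^2\le\rho$, then Gagliardo--Nirenberg makes $|u|_q$ small for $q>2$, and H\"older on the finite-measure set $\{V\le M\}$ shows $\int_{\{V\le M\}}u^2\to 0$ as $\rho\to 0$; hence the infimum of $J$ over the small-gradient part of $S_a$ diverges to $+\infty$. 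So the landscape is ``$+\infty$ at spread-out functions, $-\infty$ at concentrated ones'': having $I>0$ at small $\tau$ and $I\to-\infty$ at large $\tau$ gives a descent, not the valley--ridge--valley separation of Lemma~\ref{mg2} that drives Jeanjean's scheme. A second, independent gap: your boundedness argument and your proof of $\lambda_a>0$ both invoke Pohozaev-type information, i.e. the $\tau$-derivative of $\frac12\int_{\R^d}V(x/\tau)u^2dx$, which produces $\int_{\R^d}\langle\nabla V(x),x\rangle u^2dx$; under $(V_1')$--$(V_2')$ the potential is merely continuous, so $\nabla V$, and the bounds on $W$ and $Y$ that power Section~4 via $(V_2)$--$(V_3)$, are simply unavailable (likewise, the nonexistence computation for $\lambda\le 0$ relies on $V\le 0$ and cannot be recycled when $V\ge V_0>0$). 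As written, the proposal cannot be completed; whatever structure replaces minimization here must be built without differentiating $V$ and without a low-energy region at small gradients.
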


The paper is organized as follows. In Section $2$, we state the main
notations and the main results that will be used later. The basic
properties of the associated functional to the auxiliary problem
with $V=0$ are then considered in Section $3$. 
In Sections $4$ and $5$, we will show the proof of Theorems
\ref{mainnthm} and \ref{mainnthm2}.

\section{Preliminary results}

In this section, we will focus on the variational structure of
equation \eqref{maineq}. First we introduce the fractional Sobolev
space
$$H^{s}(\mathbb{R}^d)=\{u\in L^2(\mathbb{R}^d):|\nabla_s u|_2<+\infty\},$$
with $$|\nabla_s u|_2^2=\int_{\mathbb{R}^d\times \mathbb{R}^d}\frac{|u(x)-u(y)|^2}{|x-y|^{d+2s}}dxdy.$$
\medskip
\\
For $0<s_1<s_2<1$, we set
 $$H^{s_1,s_2}(\R^d)=H^{s_1}(\R^d)\cap
H^{s_2}(\R^d).$$ One can check that $H^{s_1,s_2}(\R^d)$ is a Hilbert
space  with respect to the scalar product
\[\langle u,v\rangle_{H^{s_1,s_2}}=\int_{\R^d} u(x)v(x)dx+\frac{1}{2}\sum\limits_{i=1}^{2}\int_{\R^d\times \R^d} \frac{(u(x)-u(y))v(x)}{|x-y|^{d+2s_i}}dxdy,\ \forall u,v\in H^{s_1,s_2}(\R^d),\]
with the induced norm
$$\|u\|^{2}=\|u\|_{H^{s_1,s_2}}^{2}=|\nabla _{s_{1}}u|_2^{2}+|\nabla _{s_{2}}u|_2^{2}+|u|_{2}^{2}.$$
\\
Denote $2^*_s=\frac{2d}{d-2s}$. It is well known that $H^{s}(\mathbb{R}^d)$ is continuously embedded
in $L^{q}(\R^d)$, for any $q\in [2, 2^*_{s }].$ Then
\[H^{s_1,s_2}(\R^d)\subset L^q(\R^d), \ \forall q\in [2,2^*_{s_2}].\]
And we denote $H^{s_1,s_2}_{r}(\mathbb{R}^{d})$ by
$$H^{s_1,s_2}_{r}(\mathbb{R}^{d})=\{u\in H^{s_1,s_2}(\mathbb{R}^{d}) : u(x)=h(|x|), \ \ h:[0,+\infty[\rightarrow \mathbb{R} \}.$$
\begin{Lem}\label{rad}
$H^{s_1,s_2}_{r}(\mathbb{R}^{d})$ is compactly embedding into
$L^{p}(\mathbb{R}^{d})$ for $p\in (2,2^{\ast}_{s_{2}})$.
\end{Lem}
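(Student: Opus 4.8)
The plan is to deduce the statement from the already classical radial compactness of a \emph{single} fractional Sobolev space, exploiting the fact that the intersection norm dominates the $H^{s_2}$-norm. In this way the only genuinely analytic ingredient lives in the space $H^{s_2}_r(\mathbb{R}^d)$, and the passage to the intersection space is purely formal.

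First I would record the continuous inclusion $H^{s_1,s_2}_r(\mathbb{R}^d)\hookrightarrow H^{s_2}_r(\mathbb{R}^d)$. Directly from the definition of the norm,
$$\|u\|_{H^{s_2}}^2=|\nabla_{s_2}u|_2^2+|u|_2^2\leq |\nabla_{s_1}u|_2^2+|\nabla_{s_2}u|_2^2+|u|_2^2=\|u\|^2,$$
so that every sequence bounded in $H^{s_1,s_2}_r(\mathbb{R}^d)$ is automatically bounded in $H^{s_2}_r(\mathbb{R}^d)$, while radial symmetry is of course preserved under this inclusion.

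Second, I would invoke the fractional analogue of the Strauss--Lions radial compactness lemma: for a single order $s\in(0,1)$ with $2s<d$, the embedding $H^{s}_r(\mathbb{R}^d)\hookrightarrow\hookrightarrow L^p(\mathbb{R}^d)$ is compact for every $p\in(2,2^{*}_{s})$. Applying this with $s=s_2$ and combining with the first step yields the claim: if $(u_n)$ is bounded in $H^{s_1,s_2}_r(\mathbb{R}^d)$, then it is bounded in $H^{s_2}_r(\mathbb{R}^d)$, hence admits a subsequence converging strongly in $L^p(\mathbb{R}^d)$ for all $p\in(2,2^{*}_{s_2})$. Since a bounded operator followed by a compact one is compact, the embedding $H^{s_1,s_2}_r(\mathbb{R}^d)\hookrightarrow L^p(\mathbb{R}^d)$ is compact.

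If a self-contained argument were preferred, one would instead establish the $H^{s_2}_r$ compactness directly: a radial decay estimate of the form $|u(x)|\leq C\|u\|_{H^{s_2}}|x|^{-\gamma}$ for some $\gamma>0$ and $|x|$ bounded away from the origin controls the tail $\int_{|x|>R}|u|^p\,dx$ uniformly, while a local fractional Rellich--Kondrachov theorem gives strong $L^p$ convergence on balls; a diagonal extraction then produces the globally convergent subsequence. The main obstacle in either route is precisely this single-space radial compactness, where two failure mechanisms must be excluded: escape of mass to infinity, which is what forces the restriction $p>2$ and is tamed by the radial decay, and critical scaling, which is what forces $p<2^{*}_{s_2}$. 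Once this ingredient is secured, the reduction to the intersection space is immediate from the norm domination recorded above.
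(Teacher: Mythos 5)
Your proposal is correct and takes essentially the same route as the paper: the paper's entire proof is a citation of the radial compactness result of Lions \cite{lions}, which is precisely the single-space fractional Strauss--Lions lemma you invoke for $H^{s_2}_r(\mathbb{R}^d)$. Your explicit reduction from the intersection space to $H^{s_2}_r(\mathbb{R}^d)$ via the norm domination $\|u\|_{H^{s_2}}\leq \|u\|_{H^{s_1,s_2}}$ is the same routine step the paper leaves implicit.
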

\begin{proof}
The proof is similar to that in \cite{lions}.
\end{proof}
\\
Recall that from \cite{boulenger}, if $0<s<1$, $d>2s$,
$p<\frac{2d}{d-2s}$, then the fractional Gagliardo-Nirenberg
inequality
\begin{equation}\label{gagliardo}
|u|_{p+2}^{p+2}\leq B(p,ds)
|\nabla_{s}u|_{2}^{\frac{dp}{2s}}|u|_{2}^{p+2-\frac{dp}{2s}}, \ \
\forall u\in H^{s_1,s_2}(\R^d),
\end{equation}
holds with the optimal constant $B(d,p,s)>0$ given by
$$B(d,p,s)=\left(\frac{2s(p+2)-dp}{dp}\right)^{\frac{dp}{4s}}\frac{2s(p+2)}{\left(2s(p+2)-dp\right)\|\mathcal{Q}_s\|_{2}^{p}},$$
and $\mathcal{Q}_s$ is the ground state of
$$(-\Delta )^{s}u+u=|u|^{p}u, \ \ x\in \R^{d},$$
whose existence and uniqueness have been proved by \cite{frank}.
 For more details on fractional Sobolev spaces, we refer the
  readers to see \cite{Radulescu,valdinoci}.\\
Now, we recall the definition of the Palais Smale sequence.
\begin{Def}
Let $I\in C^1(H^{s_1,s_2}(\R^d),\R)$, we say that the sequence $\{u_n\}\subset  H^{s_1,s_2}(\R^d)$ is a $(PS)_c$-sequence
for $I$ at level $c\in \R$ if
$$I(u_n)\rightarrow c \ \ \mbox{and} \ \ I'(u_n) \rightarrow 0 \ \ \mbox{in} \ \ \left(H^{s_1,s_2}(\R^d)\right)^{*}. $$
\end{Def}
 A solution $u$ to the problem \eqref{maineq} with $\displaystyle \int_{\R^d}
 |u(x)|^{2}dx=a $ corresponds to a critical point of the following
 $C^1$-functional
\[J(u)=\frac{1}{2}|\nabla_{s_1}u|_2^2+\frac{1}{2}|\nabla_{s_2}u|_2^2+\frac{1}{2}\int_{\R^d}V(x)u^2dx-\int_{\R^d}G(u)dx,\]
restricted to the sphere in $L^{2}(\R^d)$ given by  $$S_a:=\{u\in
H^{s_1, s_2}(\R^d): \int_{\R^d} |u(x)|^2dx=a\}.$$ Then the parameter
$\lambda$ appears as Lagrange multiplier.\\
The corresponding functional to \eqref{maineq} when $V=0$ is
\[I(u)=\frac{1}{2}|\nabla_{s_1}u|_2^2+\frac{1}{2}|\nabla_{s_2}u|_2^2 -\int_{\R^d}G(u)dx, \ \ \forall u\in H^{s_1,s_2}(\R^{d}).\]
It is clear that, under assumptions $(G_1)-(G_3)$, the functional
$I$ is of class $C^{1}$.\\
In order to follow the same strategy of \cite{ding}, we need the
following definitions to introduce our variational procedure.\\
$(1)$ The fiber map,
$$u(x)\rightarrow (t*u)(x)=t^\frac{d}{2}u(tx),$$ for $(t,u)\in
\mathbb{R}^{+}\times H^{s_1,s_2}(\mathbb{R}^{d})$, which preserves
the $L^{2}-$norm.\\
$(2)$ The modified functionals associated to equations
\eqref{maineq} and nonpotential case of \eqref{maineq} are
respectively
$$\Psi_{\infty,u}(t)=I(t*u), \ \ \forall (t,u)\in \R \times H^{s_1,s_2}(\R^d)$$
and
$$\Psi_{u}(t)=J(t*u), \ \ \forall (t,u)\in \R \times H^{s_1,s_2}(\R^d)$$
 $(3)$  Let
$$P_\infty(u):=s_1|\nabla_{s_1}u|_2^2+s_2|\nabla_{s_2}u|_2^2-d\int_{\R^d}\widetilde{G}(u)dx,$$
 $$\mathcal{P}_\infty:=\{u\in H^{s_1,s_2}(\R^d):P_\infty (u)=0\},
\ \  \mathcal{P}_{\infty,a}=S_a\cap \mathcal{P}_\infty,$$ and
$$\mathcal{P}_{\infty,a,r}=\mathcal{P}_{\infty,a}\bigcap H^{s_1,s_2}_r(\R^d).$$
 $(4)$ We introduce the following  function:
\[P(u)=s_1|\nabla_{s_1}u|_2^2+s_2|\nabla_{s_2}u|_2^2-\frac{1}{2}\int_{\R^d}\langle \nabla V(x),x\rangle
u^2dx-d\int_{\R^d} \widetilde{G}(u)dx.\]\\
$(5)$ We set
$$\mathcal{P}:=\{u\in H^{s_1,s_2}(\R^d):P (u)=0\},
\ \  \mathcal{P}_{a}=S_a\cap \mathcal{P}.$$

\section{Properties of the  functional $\text{I}$}

In this section, we explore the properties of  the functional
associated to the auxiliary problem \eqref{maineq} without linear
potential. More precisely, we will  prove that
$m_a:=\inf\limits_{u\in \mathcal{P}_{\infty,a }} I(u)$ and
$m_{a,r}:=\inf\limits_{u\in \mathcal{P}_{\infty,a,r}} I(u)$ are
attained. Here, we would like to mention that since the functional
$I$ is considered on the whole space $\R^N$, the qualitative
properties of $I$ are more difficult to establish due to the lack of
compactness. For this reason, we will consider two cases: the radial
and the nonradial cases.

\subsection{Technical Lemmas}
In this subsection, we prove some technical lemmas which will be
useful to prove the main result of Theorem
\ref{dec}. We need to use the radial Sobolev space only in the
proof of Theorem \ref{dec}. So
the rest of Lemmas are considered in $H^{s_1,s_2}(\R^d)$. We establish our results in the spirit of the papers \cite{bartsch,yang}.

First, we  observe that
\begin{align*}
\Psi_{\infty,u}'(t)=s_1 t^{2s_1-1} |\nabla_{s_1}u|_2^2+s_2
t^{2s_2-1} |\nabla_{s_2}u|_2^2-\frac{d}{t^{d+1}}\displaystyle
\int_{\mathbb{R}^{d}}\widetilde{G}(t^{\frac{d}{2}}u(x))dx=\frac{1}{t}P_\infty(t*u)
\end{align*}
for all $(t,u)\in \R\times H^{s_1,s_2}(\R^{d})$.
\begin{Lem}\label{max}
Suppose that $(G_1)-(G_3)$ hold. Then,  for any $u\in S_a$,  there
exists a unique $t_u>0$ such that $t_u*u\in \mathcal{P}_{\infty,
a}$. Moreover,
\[I(t_u*u)=\max\limits_{t>0}I(t*u).\]
Consequently,
\[m_a:=\inf\limits_{u\in S_a}\max\limits_{t>0} I(t*u)=\inf\limits_{u\in \mathcal{P}_{\infty,a}} I(u).\]
\end{Lem}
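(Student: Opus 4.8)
The plan is to fix $u \in S_a$ and study the fiber map $\Psi_{\infty,u}(t) = I(t*u)$ on $(0,\infty)$. Using the scaling identities $|t*u|_2 = |u|_2$ (so the constraint $t*u \in S_a$ is automatic) and $|\nabla_{s_i}(t*u)|_2^2 = t^{2s_i}|\nabla_{s_i}u|_2^2$, together with the change of variables $y = tx$ in the nonlinear term, I get
\[\Psi_{\infty,u}(t) = \tfrac12 t^{2s_1}A_1 + \tfrac12 t^{2s_2}A_2 - t^{-d}\int_{\R^d}G(t^{\frac{d}{2}}u)\,dx,\]
where $A_i := |\nabla_{s_i}u|_2^2 > 0$ since $u \ne 0$. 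The identity $\Psi_{\infty,u}'(t) = t^{-1}P_\infty(t*u)$ already recorded above shows that the positive critical points of $\Psi_{\infty,u}$ are exactly the values $t$ with $t*u \in \mathcal{P}_\infty$, hence with $t*u \in \mathcal{P}_{\infty,a}$. So the whole statement reduces to showing that $\Psi_{\infty,u}$ has a unique critical point, which is its global maximum.

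For existence of a maximizer I examine the endpoints via Remark \ref{rem}(2). The upper bound $G(s) \le C_2(|s|^\alpha + |s|^\beta)$ yields
\[t^{-d}\int_{\R^d}G(t^{\frac{d}{2}}u)\,dx \le C_2\bigl(t^{\frac{d(\alpha-2)}{2}}|u|_\alpha^\alpha + t^{\frac{d(\beta-2)}{2}}|u|_\beta^\beta\bigr),\]
and since $\alpha > 2 + \tfrac{4s_2}{d}$ forces $\tfrac{d(\alpha-2)}{2} > 2s_2 > 2s_1$, the gradient terms dominate the nonlinearity as $t \to 0^+$; thus $\Psi_{\infty,u}(t) > 0$ for small $t$ and $\Psi_{\infty,u}(0^+) = 0$. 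As $t \to \infty$ I use the lower bound $G(s) \ge C_1\min(|s|^\alpha,|s|^\beta)$: on $\{t^{d/2}|u|\ge 1\}$ the nonlinear term grows at least like $t^{d(\alpha-2)/2}$, which beats the gradient growth $t^{2s_2}$, so $\Psi_{\infty,u}(t) \to -\infty$. Hence $\Psi_{\infty,u}$ attains a positive global maximum at some $t_u > 0$ with $t_u*u \in \mathcal{P}_{\infty,a}$.

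The main obstacle is \emph{uniqueness} of $t_u$, and this is precisely where $(G_3)$ enters. Setting $h(t) := P_\infty(t*u) = t\,\Psi_{\infty,u}'(t)$, I show every zero of $h$ is a strict downcrossing. Differentiating gives
\[t h'(t) = 2s_1^2 t^{2s_1}A_1 + 2s_2^2 t^{2s_2}A_2 + d^2 t^{-d}\!\int_{\R^d}\!\widetilde{G}(t^{\frac{d}{2}}u)\,dx - \tfrac{d^2}{2}t^{-d}\!\int_{\R^d}\!\widetilde{G}'(t^{\frac{d}{2}}u)(t^{\frac{d}{2}}u)\,dx.\]
At a zero of $h$ the relation $d\,t^{-d}\int\widetilde{G}(t^{\frac{d}{2}}u)\,dx = s_1 t^{2s_1}A_1 + s_2 t^{2s_2}A_2$ eliminates the $\widetilde{G}$-integral, while $(G_3)$ gives $\int\widetilde{G}'(t^{\frac{d}{2}}u)(t^{\frac{d}{2}}u)\,dx \ge \alpha\int\widetilde{G}(t^{\frac{d}{2}}u)\,dx$ (with $\widetilde{G}\ge 0$ by Remark \ref{rem}(2)). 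Substituting yields
\[t_u h'(t_u) \le s_1 t_u^{2s_1}A_1\bigl(2s_1 + d - \tfrac{\alpha d}{2}\bigr) + s_2 t_u^{2s_2}A_2\bigl(2s_2 + d - \tfrac{\alpha d}{2}\bigr),\]
and each bracket is negative exactly because $\alpha > 2 + \tfrac{4s_2}{d} \ge 2 + \tfrac{4s_i}{d}$. Thus $h'(t_u) < 0$ at every zero; combined with $h > 0$ near $0$ and $h < 0$ near $\infty$, this forces a single zero, so $t_u$ is unique and is the global maximizer. Finally, the variational identity follows from the resulting correspondence: for each $u \in S_a$, $\max_{t>0}I(t*u) = I(t_u*u) \ge m_a$, while each $v \in \mathcal{P}_{\infty,a}$ has $t_v = 1$ by uniqueness, so $I(v) = \max_{t>0}I(t*v) \ge \inf_{u\in S_a}\max_{t>0}I(t*u)$; taking infima gives both inequalities and hence the stated equality.
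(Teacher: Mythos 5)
Your proposal is correct and follows essentially the same route as the paper: endpoint analysis of the fiber map for existence of a critical point, then strict negativity of the second-order quantity at every critical point (via $(G_3)$ and $\alpha>2+\tfrac{4s_2}{d}$) to get uniqueness and the maximum characterization. Your computation with $h(t)=P_\infty(t*u)$ and $h'(t_u)<0$ is algebraically equivalent to the paper's $\Psi''_{\infty,u}(t_u)<0$ (since $h'(t_u)=t_u\Psi''_{\infty,u}(t_u)$ at a critical point); you merely eliminate the $\widetilde{G}$-integral where the paper eliminates the $|\nabla_{s_2}u|_2^2$-term, and your closing derivation of the min-max identity is the standard consequence the paper leaves implicit.
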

\begin{proof}
First, we show that there exists (at least) a $t_u\in \mathbb{R}^+$,
such that $t_u*u\in \mathcal{P}_{\infty,a}$. This follows directly
from assumptions $(G_1)$ and $(G_2)$. Recall that
\begin{align*}
P_{\infty}(t*u)=s_1t^{2s_1}|\nabla_{s_1}u|_2^2+s_2t^{2s_2}|\nabla_{s_2}u|_2^2-\frac{d}{t^{d}}\int_{\R^d}\widetilde{G}(t^{\frac{d}{2}}u(x))dx.
\end{align*}
In light of Remark \ref{rem}, we can deduce that
\begin{align}\label{3.1}
C_1t^{2s_1}+C_2t^{2s_2}-C_3\max\{t^{\frac{\alpha-2}{2}d},t^{\frac{\beta-2}{2}d}\}\leq
P_\infty (t*u)\leq
C_1t^{2s_1}+C_2t^{2s_2}-C_3\min\{t^{\frac{\alpha-2}{2}d},t^{\frac{\beta-2}{2}d}\},
\end{align}
where $C_1, C_2,C_3>0$ are independent of $t$. Hence, for $t$ large
enough and using \eqref{3.1}, we obtain
\begin{align*}
 P_\infty
(t*u)\leq C_1t^{2s_1}+C_2t^{2s_2}-C_3t^{t^{\frac{\alpha-2}{2}d}},
\end{align*}
which proves that
\begin{align}\label{3.2}
\lim\limits_{t\rightarrow +\infty}P_\infty(t*u)=-\infty.
\end{align}
Again, for $t$ small enough and using \eqref{3.1}, we get
\begin{align*}
C_1t^{2s_1}+C_2t^{2s_2}-C_3t^{\frac{\alpha-2}{2}d}\leq P_\infty
(t*u)\leq C_1t^{2s_1}+C_2t^{2s_2}-C_3t^{\frac{\beta-2}{2}d},
\end{align*}
which implies that
\begin{align}\label{3.3}
\lim\limits_{t\rightarrow 0+}P_\infty(t*u)=0 \ \ \mbox{and} \ \
P_\infty(t*u)>0, \ \ \mbox{for} \ \ t \ \ \mbox{small enough}.
\end{align}
 In the above inequalities, we used the fact that $2s_1<2s_2<\frac{\alpha-2}{2}d<\frac{\beta-2}{2}d$.  Consequently, by combining \eqref{3.2} and \eqref{3.3}, there must
exist $t_u>0$, such that $P_\infty(t_u*u)=0$. Due to
$\Psi'_{\infty,u}(t)=\frac{1}{t} P_\infty(t*u)$, we have $t_u$ is a
critical point of $\Psi_{\infty,u}$. Therefore, using this fact and
$t*u\in S_a$, we can infer that $t_u*u\in \mathcal{P}_{\infty, a}$.
\\
Next, we show that $t_u$ is unique and $\Psi_{\infty,u}$ reaches its
maximum at $t_u$. To reach this goal, we only need to show that for
any critical point
 $t_u$ of $\Psi_{\infty,u}(t)$, we have $\Psi_{\infty,u}''(t)|_{t=t_u}<0$.\\
Let $t_u$ be a critical point of $\Psi_{\infty,u}$. Then
\begin{align}\label{3.4}
s_1t_u^{2s_1}|\nabla_{s_1}u|_2^2+s_2t_u^{2s_2}|\nabla_{s_2}u|_2^2-\frac{d}{t_u^{d}}\int_{\R^d}\widetilde{G}(t_u^{\frac{d}{2}}u(x))dx=0.
\end{align}
Now, by direct computation, we have
\begin{align}\label{3.5}
\Psi''_{\infty,u}(t)=&s_1(2s_1-1)t^{2s_1-2}|\nabla_{s_1}u|_2^2+s_2(2s_2-1)t^{2s_2-2}|\nabla_{s_2}u|_2^2
+\frac{d(d+1)}{t^{d+2}}\int_{\R^d}\widetilde{G}(t^{\frac{d}{2}}u(x))dx\\
&-\frac{d^2}{2t^{\frac{d}{2}+2}}\int_{\R^d}\widetilde{G}'(t^{\frac{d}{2}}u(x))\cdot
u(x)dx. \nonumber
\end{align}
Therefore, from \eqref{3.4} and \eqref{3.5},  we infer that
\begin{align}\label{3.6}
\Psi''_{\infty,u}(t)|_{t=t_u}=2(s_1-s_2)s_1t_u^{2s_1-2}|\nabla_{s_1}u|_2^2+\frac{(2s_2+d)d}{t_u^{d+2}}\int_{\R^d}\widetilde{G}(t_u^{\frac{d}{2}}u)dx
-\frac{d^2}{2t_u^{d+2}}\int_{\R^d}\widetilde{G}'(t_u^{\frac{d}{2}}u)\cdot
t_u^\frac{d}{2}u dx.
\end{align}
On the other hand, by assumption $(G_3)$, we obtain
\begin{align}\label{3.7}
\frac{(2s_2+d)d}{t_u^{d+2}}\int_{\R^d}\widetilde{G}(t_u^{\frac{d}{2}}u)dx
-\frac{d^2}{2t_u^{d+2}}\int_{\R^d}\widetilde{G}'(t_u^{\frac{d}{2}}u)\cdot
t_u^\frac{d}{2}u dx\leq
(2s_2+d-\frac{d\alpha}{2})\frac{d}{t_u^{d+2}}
\int_{\R^d}\widetilde{G}(t_u^{\frac{d}{2}}u)dx.
\end{align}
Consequently, by the fact that $\alpha>2+\frac{4s_2}{d}$,
$s_1<s_2$ and using \eqref{3.6} and \eqref{3.7}, we conclude that
$$\Psi''_{\infty,u}(t)|_{t=t_u}<0.$$
This ends the proof.
\end{proof}

 \begin{Rek}
 From the proof of Lemma \ref{max}, we have that if $P_{\infty}(u)\leq 0$ ($P_{\infty,r}(u)\leq 0$), then $\exists t_u\in (0,1]$, such that $t_u*u\in \mathcal{P}_{\infty,a}$ ($t_u*u\in \mathcal{P}_{\infty,a,r}$).
 \end{Rek}

\begin{Lem}\label{decc}
Suppose that $(G_1)-(G_3)$ hold and consider the function
$m:\mathbb{R}^+\rightarrow \R$ defined by $m(a):=m_a$, then $m$ is
strictly decreasing.
\end{Lem}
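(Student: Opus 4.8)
The plan is to prove the equivalent statement that $a_1<a_2$ implies $m_{a_2}<m_{a_1}$, working throughout with the variational characterization $m_a=\inf_{u\in S_a}\max_{t>0}I(t*u)$ supplied by Lemma \ref{max}. Fix $\varepsilon>0$ and choose a near-optimal $u\in S_{a_1}$ with $\max_{t>0}I(t*u)\le m_{a_1}+\varepsilon$; replacing $u$ by $t_u*u$ (which stays in $S_{a_1}$ since the fiber map preserves the $L^2$-norm, and which leaves the fiber maximum unchanged by the group property $t*(t_u*u)=(t\,t_u)*u$), I may assume $u\in\mathcal{P}_{\infty,a_1}$, so that $I(u)=\max_{t>0}I(t*u)$ and $P_\infty(u)=0$. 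Setting $c=\sqrt{a_2/a_1}>1$, I take the competitor $v=c\,u\in S_{a_2}$. Since the fiber map is linear in $u$, one has $t*v=c\,(t*u)$ for all $t>0$, whence $m_{a_2}\le\max_{t>0}I(c\,(t*u))$.

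The next step is to feed this into Remark \ref{rem}(1). Because $c>1$, we have $G(cs)\ge c^{\alpha}G(s)$ for every $s$, so $\int_{\R^d}G(c\,(t*u))\,dx\ge c^{\alpha}\int_{\R^d}G(t*u)\,dx$. Writing $K(t)=\tfrac12\bigl(|\nabla_{s_1}(t*u)|_2^2+|\nabla_{s_2}(t*u)|_2^2\bigr)$ and $\mathcal{G}(t)=\int_{\R^d}G(t*u)\,dx\ge0$, this gives the pointwise bound $I(c\,(t*u))\le c^{2}K(t)-c^{\alpha}\mathcal{G}(t)=:F_c(t)$, and therefore $m_{a_2}\le\max_{t>0}F_c(t)$. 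The algebraically useful form is $F_c(t)=c^{2}I(t*u)-(c^{\alpha}-c^{2})\mathcal{G}(t)$, which already isolates a strictly negative correction $-(c^{\alpha}-c^{2})\mathcal{G}(t)<0$ (here $c^{\alpha}>c^{2}$ precisely because $\alpha>2$) reflecting the super-quadratic growth of $G$.

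The heart of the argument is to show $\max_{t>0}F_c(t)<I(u)$. Let $t_c>0$ realize the maximum. Since $I(t_c*u)\le\max_{t}I(t*u)=I(u)$, the displayed form yields $F_c(t_c)\le c^{2}I(u)-(c^{\alpha}-c^{2})\mathcal{G}(t_c)$, so it suffices to prove the strict quantitative bound $\mathcal{G}(t_c)>\frac{(c^{2}-1)\,I(u)}{c^{\alpha}-c^{2}}$. To control $\mathcal{G}(t_c)$ I would first exploit that, by the computation of $\Psi_{\infty,u}'$ in the excerpt together with $K'(t)=s_1t^{2s_1-1}|\nabla_{s_1}u|_2^2+s_2t^{2s_2-1}|\nabla_{s_2}u|_2^2$, one has $\mathcal{G}'(t)=\frac{d}{t}\int_{\R^d}\widetilde{G}(t*u)\,dx\ge0$, so $\mathcal{G}$ is nondecreasing; the stationarity $c^{2}K'(t_c)=c^{\alpha}\mathcal{G}'(t_c)$ with $c^{\alpha-2}>1$, combined with the fact that the kinetic orders $2s_1<2s_2$ lie strictly below the growth exponent of $\mathcal{G}$, locates $t_c\le1$ and pins its decay as $c\to\infty$. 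I would then quantify the decay rate of $\mathcal{G}(t_c)$ using $(G_3)$: differentiating $\int_{\R^d}\widetilde{G}(t*u)\,dx$ and invoking $\widetilde{G}'(s)s\ge\alpha\widetilde{G}(s)$ yields $\bigl(\int_{\R^d}\widetilde{G}(t*u)\,dx\bigr)'\ge\frac{d(\alpha-2)}{2t}\int_{\R^d}\widetilde{G}(t*u)\,dx$, a one-sided growth law that, after integration, matches $\mathcal{G}(t_c)$ rate-by-rate against the explicit quantity $(c^{2}-1)/(c^{\alpha}-c^{2})$. Granting this, $m_{a_2}\le\max_{t}F_c(t)<I(u)\le m_{a_1}+\varepsilon$; letting $\varepsilon\to0$ gives $m_{a_2}\le m_{a_1}$, and the strictness persists because the correction $(c^{\alpha}-c^{2})\mathcal{G}(t_c)$ stays bounded away from $0$.

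I expect the main obstacle to be exactly this last quantitative estimate. In the single-order case the competitor $F_c$ is a genuine two-power function of $t$ whose maximum scales as an explicit negative power of $c$, and the conclusion is immediate by homogeneity; here the two distinct orders $s_1<s_2$ destroy self-similarity, so $F_c$ has no closed-form maximum and a naive coefficient comparison using only the pointwise sandwich in $(G_2)$ is not sharp enough to beat the quadratic amplitude gain $c^2$. The decisive inputs that replace homogeneity are the $L^2$-supercriticality $\alpha>2+\tfrac{4s_2}{d}$ (which, through the coercivity it forces on $\mathcal{P}_\infty$, keeps $I(u)$ comparable to the kinetic energy) together with the integrated growth law from $(G_3)$; reconciling these into the clean strict inequality $\max_{t}F_c(t)<I(u)$ uniformly in $c>1$ is the delicate point of the proof.
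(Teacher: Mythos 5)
Your reduction is sound up to the point you flag, but the flagged step is a genuine gap, and it is not merely a missing computation: the estimate $\max_{t>0}F_c(t)<I(u)$ cannot be derived from the hypotheses, because the dominating function $F_c$ you chose is already too lossy. Since $t=1$ is admissible in the maximum and $u\in\mathcal{P}_{\infty,a_1}$, a \emph{necessary} condition for your inequality is $F_c(1)<I(u)$, i.e. $(c^2-1)K(1)<(c^{\alpha}-1)\mathcal{G}(1)$, where $K(1)=\tfrac12\bigl(|\nabla_{s_1}u|_2^2+|\nabla_{s_2}u|_2^2\bigr)$ and $\mathcal{G}(1)=\int_{\R^d}G(u)\,dx$. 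Strict monotonicity must in particular be proved for $a_2$ close to $a_1$, i.e. $c\to1^+$, and in that limit the necessary condition becomes $|\nabla_{s_1}u|_2^2+|\nabla_{s_2}u|_2^2\le\alpha\int_{\R^d}G(u)\,dx$. But the only control you have on a (near-)minimizer is the constraint $s_1|\nabla_{s_1}u|_2^2+s_2|\nabla_{s_2}u|_2^2=d\int_{\R^d}\widetilde{G}(u)\,dx$ together with $\tfrac{\alpha-2}{2}G\le\widetilde{G}\le\tfrac{\beta-2}{2}G$, which yields only $|\nabla_{s_1}u|_2^2+|\nabla_{s_2}u|_2^2\le\tfrac{d(\beta-2)}{2s_1}\int_{\R^d}G(u)\,dx$. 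So your needed inequality is guaranteed only when $\alpha\ge\tfrac{d(\beta-2)}{2s_1}$, which fails when $\alpha$ is near its lower bound $2+\tfrac{4s_2}{d}$ and $\beta$ near its upper bound $\tfrac{2d}{d-2s_1}$ --- a regime the assumptions allow (that window is nonempty precisely because $d<\tfrac{2s_1s_2}{s_2-s_1}$). The root cause is that pure amplitude rescaling $v=cu$ gains only $c^{\alpha}$ on the potential term against the cost $c^{2}$ on the kinetic term; when $G$ has genuine $\beta$-growth on the minimizer (e.g. $G=|s|^{\alpha}+|s|^{\beta}$ with the $\beta$-part dominant) this is too weak, and the integrated growth law from $(G_3)$ cannot repair a deficit in the constants. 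Your final strictness claim (``the correction stays bounded away from $0$'') suffers the same unproven uniformity as $\varepsilon\to0$.

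The paper circumvents all of this with a different competitor that mixes amplitude and dilation: $u_2(x)=\theta^{A}u_1(\theta^{B}x)$ with $\theta=a_2/a_1>1$, $A=\tfrac{2s_1-d}{4s_1}$, $B=-\tfrac{1}{2s_1}$, chosen so that the mass multiplies exactly by $\theta$, the $s_1$-seminorm is invariant, and the $s_2$-seminorm strictly decreases. The decisive gain is that the amplitude factor $\theta^{A}$ is $\le 1$, so Remark \ref{rem}(1) applies with the \emph{upper} exponent $\beta$, and the dilation Jacobian gives $-\theta^{-Bd}\int_{\R^d}G(\theta^{A}u_1)\,dx\le-\theta^{A\beta-Bd}\int_{\R^d}G(u_1)\,dx\le-\int_{\R^d}G(u_1)\,dx$, where $A\beta-Bd=\tfrac{d}{2s_1}-\tfrac{d-2s_1}{4s_1}\beta>0$ uses exactly the subcriticality $\beta<\tfrac{2d}{d-2s_1}$. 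This produces $I(t*u_2)\le I(t*u_1)$ along the \emph{whole} fiber, hence $m(a_2)\le m(a_1)$ with no maximization estimate at all; strictness is then handled separately, since equality would force a minimizing sequence with $\int_{\R^d}G\to0$, hence vanishing gradients on $\mathcal{P}_{\infty}$ and $I\to0$, contradicting $m_{a_2}>0$ (Lemma \ref{Ipositive}). If you want to salvage your approach, you must build a dilation into your competitor so that the gain on the potential term is measured by $\beta$ rather than $\alpha$; with amplitude scaling alone the key estimate is out of reach.
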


\begin{proof}
Let $0<a_1<a_2<\infty$ and $\theta=\frac{a_2}{a_1}>1$. For any
$u_1\in S_{a_1}$, set
\[ u_2(x)=\theta^A u_1(\theta^B x),\]
where $A=\frac{2s_1-d}{4s_1}<0$, $B=-\frac{1}{2s_1}<0$. It is clear
that
\[2A-B d=1, \ 2A +B(2s_1-d)=0.\]
This implies that
\begin{align*}
|\nabla_s(\theta^A u(\theta^B
x))|_2^2=\int_{\R^d\times\R^d}\frac{\theta^{2A}|u(\theta^B
x)-u(\theta^B y)|^2}{|x-y|^{d+2s}}dxdy =\theta^{2A+B(2s
-d)}|\nabla_s u|_2^2,
\end{align*}
and
\[ |\theta^A u(\theta^B x)|_2^2=\theta^{2A-B d}|u|_2^2.   \]
Therefore
\[ |\nabla_{s_1}u_2(x)|_2^2=|\nabla_{s_1}u_1(x)|_2^2,\ |\nabla_{s_2}u_2(x)|_2^2=\theta^{\frac{s_1-s_2}{s_1}}|\nabla_{s_2}u_1(x)|_2^2,\ |u_2(x)|_2^2=a_2. \]
Recall that, from Remark \ref{rem}, we have
$$ A\beta-Bd=\frac{d}{2s_1}-\frac{d-2s_1}{4s_1}\beta>0.$$
Then, by Remark \ref{rem} and Lemma \ref{max}, we obtain
\begin{align*}
m(a_2)&\leq \max_{t>0} I(t*u_2)=I(t_{u_2}*u_2)=I(t_{u_2}^\frac{d}{2}\theta^A u_1(\theta^B t_{u_2}x))\\
&=\frac{1}{2}|\nabla_{s_1}(t_{u_2}*u_1)|_2^2+\frac{1}{2}\theta^{\frac{s_1-s_2}{s_1}}|\nabla_{s_2}(t_{u_2}*u_1)|_2^2 -\theta^{-B d}\int_{\R^d}G(t_{u_2}^\frac{d}{2}\theta^A u_1( t_{u_2}x))dx\\
&\leq \frac{1}{2}|\nabla_{s_1}(t_{u_2}*u_1)|_2^2+\frac{1}{2}|\nabla_{s_2}(t_{u_2}*u_1)|_2^2-\theta^{A\beta-Bd}\int_{\R^d}G(t_{u_2}^\frac{d}{2} u_1( t_{u_2}x))dx\\
&\leq \frac{1}{2}|\nabla_{s_1}(t_{u_2}*u_1)|_2^2+\frac{1}{2}|\nabla_{s_2}(t_{u_2}*u_1)|_2^2-\int_{\R^d}G(t_{u_2}*u_1 )dx\\
&=I(t_{u_2}*u_1)\leq \max\limits_{t>0}I(t*u_1),
\end{align*}

It follows, since $u_1\in S_{a_1}$ is arbitrary, that
\[m(a_2)\leq \inf_{u\in S_{a_1}}\max_{t>0} I(t*u)=m(a_1).\]
The equality holds only if there exists $u_{1,n}\in S_{a_1}$ such
that $\int_{\R^d} G(t_n*u_{1,n})dx\rightarrow 0$, where $t_n$
satisfies $t_n*u_{2,n}\in \mathcal{P}_{a_2}$. Then we have
\[\int_{\R^d} \widetilde{G}(t_n*u_{2,n})dx\rightarrow 0\ \text{as}\ n\rightarrow \infty.\]
Then by $t_n*u_{2,n}\in \mathcal{P}_{a_2}$, we have
\[|\nabla_{s_1} (t_n*u_{2,n})|_2\rightarrow 0,  \ |\nabla_{s_2} (t_n*u_{2,n})|_2\rightarrow 0, \ \text{as}\ n\rightarrow \infty.\]
Therefore, $I(t_n*u_{2,n})\rightarrow 0$. This contradict the fact
that $\inf_{u\in \mathcal{P}_{a_2} } I(u)>0$, which means $m$ is
strictly decreasing.
\end{proof}

\begin{Lem}\label{con}
Suppose that $(G_1)-(G_3)$ hold, then $m$ is continuous on $\R^+$.
\end{Lem}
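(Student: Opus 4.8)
The plan is to prove continuity at an arbitrary fixed $a>0$ by showing $m(a_n)\to m(a)$ for every sequence $a_n\to a$, split into upper and lower semicontinuity, using throughout the characterisation $m_b=\inf_{u\in S_b}\max_{t>0}I(t*u)$ from Lemma \ref{max} together with the scalar rescaling $u\mapsto\sqrt{b'/b}\,u$, which maps $S_b$ onto $S_{b'}$ and tends to the identity as $b'\to b$.

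The engine of the argument is a coercivity estimate on the Pohozaev set. For $u\in\mathcal P_{\infty,b}$ the constraint $P_\infty(u)=0$ gives $\int_{\R^d}\widetilde G(u)dx=\frac1d\big(s_1|\nabla_{s_1}u|_2^2+s_2|\nabla_{s_2}u|_2^2\big)$, and since $G\ge0$ and $\widetilde G\ge(\frac\alpha2-1)G$ by Remark \ref{rem}, we obtain
\[I(u)\ge\Big(\tfrac12-\tfrac{2s_1}{d(\alpha-2)}\Big)|\nabla_{s_1}u|_2^2+\Big(\tfrac12-\tfrac{2s_2}{d(\alpha-2)}\Big)|\nabla_{s_2}u|_2^2,\]
whose two coefficients are strictly positive because $\alpha>2+\frac{4s_2}{d}$. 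Hence any family on which $I$ is bounded and whose $L^2$-norm is bounded is bounded in $H^{s_1,s_2}(\R^d)$; moreover, combining $I(u)\ge m_b>0$ with $P_\infty(u)=0$ rules out both gradient norms tending to $0$, so on such families $|\nabla_{s_1}u|_2^2+|\nabla_{s_2}u|_2^2$ also stays bounded away from $0$.

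For upper semicontinuity I would fix $\varepsilon>0$, pick $u\in\mathcal P_{\infty,a}$ with $I(u)<m(a)+\varepsilon$, set $u_n=\sqrt{a_n/a}\,u\in S_{a_n}$ so that $u_n\to u$ in $H^{s_1,s_2}$, and estimate $m(a_n)\le\max_{t>0}I(t*u_n)$. For lower semicontinuity I would choose near-minimisers $u_n\in\mathcal P_{\infty,a_n}$ with $I(u_n)<m(a_n)+\frac1n$; by the coercivity estimate and $\sup_n a_n<\infty$ these are bounded in $H^{s_1,s_2}$, and $v_n=\sqrt{a/a_n}\,u_n\in S_a$ satisfies $\|v_n-u_n\|\to0$, whence $m(a)\le\max_{t>0}I(t*v_n)$ while $\max_{t>0}I(t*u_n)=I(u_n)$ by Lemma \ref{max}. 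In both cases the remaining point, and the only real obstacle, is to pass the maximum over $t$ to the limit: to show $\max_{t>0}I(t*w_n)\to\max_{t>0}I(t*w)$ when $w_n\to w$ strongly with gradients bounded and bounded away from $0$. Writing the maximiser $t_{w_n}$ through $P_\infty(t_{w_n}*w_n)=0$ and inserting the power bounds of Remark \ref{rem} exactly as in the proof of Lemma \ref{max}, but now uniformly in $n$ via $2s_1<2s_2<\frac{(\alpha-2)d}2<\frac{(\beta-2)d}2$, I would confine $t_{w_n}$ to a compact subinterval $[\delta,T]\subset(0,\infty)$; the local uniform convergence of $t\mapsto I(t*w_n)$ then yields the claim. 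Letting $\varepsilon\to0$ and $n\to\infty$ gives $\limsup_n m(a_n)\le m(a)\le\liminf_n m(a_n)$, proving continuity.
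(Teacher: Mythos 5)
Your proposal is correct and follows essentially the same route as the paper: both rest on the inf--max characterization of Lemma \ref{max}, rescale (near-)minimizers between nearby $L^2$-spheres by $u\mapsto\sqrt{b'/b}\,u$, confine the fiber maximizer $t$ to a compact interval via the growth bounds of Remark \ref{rem}, and then pass to the limit in the fiber maximum (your ``uniform convergence on $[\delta,T]$'' step is exactly the paper's ``$t_h$ bounded plus $\int_{\R^d}(G(t_h*u_h)-G(t_h*u))dx=o(1)$'' step). If anything, your bookkeeping with explicit $\varepsilon$-near-minimizers on $\mathcal{P}_{\infty,a_n}$ is slightly more careful than the paper's write-up, which takes an arbitrary $u\in\mathcal{P}_{\infty,a}$ yet concludes with $I(u)=m(a)$, and which structures the argument through the monotonicity of $m$ (Lemma \ref{decc}) to obtain one-sided limits; conversely, you should note that the uniform positivity of $m_{a_n}$ (needed for your lower bound on the gradients) is most easily justified by that same monotonicity.
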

\begin{proof}
Since $m$ is strictly decreasing (see Lemma \ref{dec}), for any
fixed $a>0$, $m(a-h)$ and $m(a+h)$ are monotonic and bounded as
$h\rightarrow 0+$, thus they have limits. Moreover, we know that
$$m(a-h)\geq m(a)\geq m(a+h).$$ Thus we have
\[\lim\limits_{h\rightarrow 0+} m(a-h)\geq m(a)\geq \lim\limits_{h\rightarrow 0_+}m(a+h).\]
Then we only need to show that $\lim\limits_{h\rightarrow 0+} m(a-h)\leq m(a)$ and $\lim\limits_{h\rightarrow 0_+}m(a+h)\geq m(a).$\\
Take $u\in \mathcal{P}_{\infty,a}$, for $h>0$, let
\[u_h(x):=\sqrt{1-\frac{h}{a}} u(x).\]
Evidently, $|u_h|_2^2=a-h$, and by Lemma \ref{max}, there exists
$t_h>0$, such that $t_h*u_h\in \mathcal{P}_{\infty,a-h}$.\\
\textbf{Claim:} $t_h$ is bounded in $\mathbb{R}$ and $t_h*u$ is
bounded in $H^{s_1,s_2}(\mathbb{R}^{d})$ as $h\rightarrow 0+$.\\
If $t_h<1$ the result follows directly. So, we can assume that
$t_h>1$. Now, since $t_h*u_h\in \mathcal{P}_{\infty,a-h}$, we get
\[s_1|\nabla_{s_1}(t_h*u_h)|_2^2+s_2|\nabla_{s_2}(t_h*u_h)|_2^2=d\int_{\R^d}\widetilde{G}(t_h*u_h)dx.\]
Therefore, we have
\begin{align*}
s_1t_h^{2s_1}|\nabla_{s_1}u_h|_2^2+s_2t_h^{2s_2}|\nabla_{s_2}
u_h|_2^2&=d\int_{\R^d}\widetilde{G}(t_h*u_h)dx
\geq d(\frac{\alpha}{2}-1)Ct_h^{\frac{\alpha-2}{2}d} |u_h|_\alpha^\alpha\\
&=Cd(\frac{\alpha}{2}-1) t_h^{\frac{\alpha-2}{2}d}
(1-\frac{h}{a})^\frac{\alpha}{2} |u|_\alpha^\alpha.
\end{align*}
It follows, for $h$ small enough, that
\begin{equation}\label{3.2.1}
s_1t_h^{2s_1} |\nabla_{s_1}u_h|_2^2+s_2t_h^{2s_2} |\nabla_{s_2}
u_h|_2^2  \geq
\frac{C}{2}d(\frac{\alpha}{2}-1)t_h^{\frac{\alpha-2}{2}d}
|u|_\alpha^\alpha.
\end{equation}
On the other hand, from the fact that  $\alpha>2+\frac{4s_2}{d}$
(see $(G_2)$), one has
\begin{equation}\label{3.2.2}
\frac{\alpha-2}{2}d>2s_2>2s_1.
\end{equation}
Thus, using \eqref{3.2.1} and \eqref{3.2.2}, $t_h$ is bounded
as $h\rightarrow 0+$, and so, $t_h*u$ is also bounded in $H^{s_1,s_2}(\R^d)$.  This proves the claim.\\
Similar to \cite{yang}, we obtain
 \begin{equation}\label{3.2.3}
 \int_{\R^d}(G(t_h*u_h)-G(t_h*u))dx=o(1),\ \text{as}\ \ h\rightarrow 0+.
\end{equation}
 Then, combining \eqref{3.2.3} with the claim, we obtain
\[I(t_h*u)-I(t_h*u_h)=\frac{h}{2a}(|\nabla_{s_1}(t_h*u)|_2^2+|\nabla_{s_2}(t_h*u)|_2^2)+\int_{\R^d} (G(t_h*u_h)-G(t_h*u))dx=o(1).\]
Thus,
\[m(a-h)\leq I(t_h*u_h)\leq I(t_h*u)+o(1)\leq I(u)+o(1)=m(a)+o(1).\]
This implies $$\lim\limits_{h\rightarrow 0+} m(a-h)\leq m(a).$$ Now,
take $u\in \mathcal{P}_{\infty,a}$ and
$v_h(x)=\frac{1}{\sqrt{1+\frac{h}{a}}}u(x)$. By the same argument
used above, we can deduce that
$$\lim\limits_{h\rightarrow 0_+}m(a+h)\geq m(a).$$
Therefore, $m$ is continuous on $\mathbb{R}^+$ and so we get the
proof of our desired result.
\end{proof}
\begin{Rek}
Similarly, we have $m_r:a\rightarrow m_{a,r}$ is continuous and strictly decreasing.
\end{Rek}
\begin{Lem}\label{coercive}
Let $(G_1)-(G_3)$ be satisfied. Then, the functional $I$ restricted
to $\mathcal{P}_{\infty,a}$ is coercive.
\end{Lem}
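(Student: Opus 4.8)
The plan is to exploit the Pohozaev-type constraint $P_\infty(u)=0$ to trade the nonlinear term $\int_{\R^d}G(u)\,dx$ for the two Gagliardo seminorms, and then observe that on $\mathcal{P}_{\infty,a}$ the $L^2$-norm is frozen: since $u\in S_a$ we have $|u|_2^2=a$, so $\|u\|^2=|\nabla_{s_1}u|_2^2+|\nabla_{s_2}u|_2^2+a$. Consequently, proving coercivity of $I|_{\mathcal{P}_{\infty,a}}$ amounts to showing that $I(u)\to+\infty$ as soon as $|\nabla_{s_1}u|_2^2+|\nabla_{s_2}u|_2^2\to+\infty$ along $\mathcal{P}_{\infty,a}$.

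First I would use the membership $u\in\mathcal{P}_\infty$, i.e. $P_\infty(u)=0$, to write
\[ d\int_{\R^d}\widetilde{G}(u)\,dx=s_1|\nabla_{s_1}u|_2^2+s_2|\nabla_{s_2}u|_2^2. \]
Next, by Remark \ref{rem}$(2)$ we have $G(s)\ge 0$ and $(\tfrac{\alpha}{2}-1)G(s)\le\widetilde{G}(s)$, hence $G(s)\le\frac{2}{\alpha-2}\widetilde{G}(s)$. Integrating and combining with the previous identity gives the key estimate
\[ \int_{\R^d}G(u)\,dx\le\frac{2}{d(\alpha-2)}\bigl(s_1|\nabla_{s_1}u|_2^2+s_2|\nabla_{s_2}u|_2^2\bigr). \]
Substituting this into the expression for $I$ yields
\[ I(u)\ge\Bigl(\tfrac12-\tfrac{2s_1}{d(\alpha-2)}\Bigr)|\nabla_{s_1}u|_2^2+\Bigl(\tfrac12-\tfrac{2s_2}{d(\alpha-2)}\Bigr)|\nabla_{s_2}u|_2^2. \]

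To finish, I would check positivity of the two coefficients. Condition $(G_2)$ states $\alpha>2+\frac{4s_2}{d}$, that is $d(\alpha-2)>4s_2>4s_1$, so each coefficient $\tfrac12-\tfrac{2s_i}{d(\alpha-2)}$ is strictly positive; letting $c>0$ denote the smaller one, we obtain $I(u)\ge c\bigl(|\nabla_{s_1}u|_2^2+|\nabla_{s_2}u|_2^2\bigr)=c(\|u\|^2-a)$, which tends to $+\infty$ as $\|u\|\to+\infty$. This establishes coercivity.

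The crux of the argument is the positivity of those coefficients, which rests entirely on the $L^2$-supercritical lower bound $\alpha>2+\frac{4s_2}{d}$ from $(G_2)$; this is exactly the threshold that guarantees the gradient terms in $I$ dominate the nonlinearity once the constraint is imposed. Everything else is the routine mechanism of replacing $\int G(u)$ via $\int\widetilde{G}(u)$ and the constraint, so I expect no further obstacle beyond carefully invoking the two comparison inequalities of Remark \ref{rem} to control $G$ by $\widetilde{G}$.
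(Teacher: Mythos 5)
Your proof is correct and follows essentially the same strategy as the paper: use the constraint $P_\infty(u)=0$ together with the comparison between $G$ and $\widetilde{G}$ coming from $(G_2)$ to absorb $\int_{\R^d}G(u)\,dx$ into the gradient terms, with positivity of the resulting coefficients guaranteed by $\alpha>2+\tfrac{4s_2}{d}$. Your chaining is in fact slightly cleaner than the paper's (you use only the $\alpha$-side inequality $G\le\tfrac{2}{\alpha-2}\widetilde{G}$ and obtain the bound $I(u)\ge\bigl(\tfrac12-\tfrac{2s_2}{d(\alpha-2)}\bigr)\bigl(|\nabla_{s_1}u|_2^2+|\nabla_{s_2}u|_2^2\bigr)$, whereas the paper detours through $\int_{\R^d}G(u)\,dx$ and also invokes the $\beta$-side bound), but this is a streamlining of the same argument rather than a different route.
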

\begin{proof}
In light of $(G_2)$, we can deduce that for any $u\in
\mathcal{P}_{\infty,a}$
\begin{align}\label{Ico}
s_1|\nabla_{s_{1}}u|_{2}^{2}+s_2|\nabla_{s_{2}}u|_{2}^{2}&=d\int_{\R^{d}}\widetilde{G}(u)dx \leq \frac{d}{2}\beta \int_{\R^{d}}G(u)dx. 
\end{align}
It follows, using again $(G_2)$, that
\begin{align}\label{Icoer}
I(u)&= \frac{1}{2}|\nabla_{s_{1}}u|_{2}^{2}+
\frac{1}{2}|\nabla_{s_{2}}u|_{2}^{2}- \int_{\R^{d}}G(u)dx\\
&\geq\frac{1}{2s_2}\left[s_1|\nabla_{s_{1}}u|_{2}^{2}+
s_2|\nabla_{s_{2}}u|_{2}^{2}\right]- \int_{\R^{d}}G(u)dx\nonumber\\
&\geq
\frac{d}{4s_2}\int_{\R^{d}}\left(g(u)udx-(2+\frac{4s_2}{d})G(u)\right)dx\nonumber\\
&\geq \frac{d}{4s_2}(\alpha-2-\frac{4s_2}{d})\int_{\R^{d}}G(u)dx\nonumber\\
&\geq \frac{s_1}{2\beta} (\alpha-2-\frac{4s_2}{d})
\left[|\nabla_{s_{1}}u|_{2}^{2}+|\nabla_{s_{2}}u|_{2}^{2}\right],\nonumber
\end{align}
for any $u\in \mathcal{P}_{\infty,a}$. Here, we used the fact $\int_{\R^{d}} G(u)dx\geq0$ for any $u\in \mathcal{P}_{\infty,a}$.
This end the proof.
\end{proof}
\begin{Lem}\label{Ipositive}
Assume that the conditions of Lemma \ref{coercive} are fulfilled,
then
$$m_a>0.$$
\end{Lem}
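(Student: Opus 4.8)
The plan is to combine the coercivity estimate established in the proof of Lemma~\ref{coercive} with a uniform lower bound on the fractional gradients along $\mathcal{P}_{\infty,a}$. Writing $T(u):=|\nabla_{s_1}u|_2^2+|\nabla_{s_2}u|_2^2$, the chain of inequalities \eqref{Icoer} gives, for every $u\in\mathcal{P}_{\infty,a}$,
\[
I(u)\geq C_0\,T(u),\qquad C_0:=\frac{s_1}{2\beta}\Big(\alpha-2-\frac{4s_2}{d}\Big)>0,
\]
the positivity of $C_0$ being guaranteed by $(G_2)$. Thus it suffices to show that $\inf_{u\in\mathcal{P}_{\infty,a}}T(u)>0$, since then $m_a\geq C_0\inf T(u)>0$.

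To bound $T$ from below, I would exploit the Pohozaev-type constraint $P_\infty(u)=0$, namely
\[
s_1|\nabla_{s_1}u|_2^2+s_2|\nabla_{s_2}u|_2^2=d\int_{\R^d}\widetilde{G}(u)\,dx.
\]
Since $s_1<s_2$, the left-hand side is at least $s_1 T(u)$, while the second estimate in Remark~\ref{rem}$(2)$ bounds $\widetilde{G}(u)$ by $C(|u|_\alpha^\alpha+|u|_\beta^\beta)$. Applying the fractional Gagliardo--Nirenberg inequality \eqref{gagliardo} with $s=s_1$ (admissible because $\alpha,\beta<\frac{2d}{d-2s_1}$) and using $|u|_2^2=a$, one obtains
\[
|u|_\alpha^\alpha\leq C_\alpha\,|\nabla_{s_1}u|_2^{\frac{d(\alpha-2)}{2s_1}}\leq C_\alpha\,T(u)^{\frac{d(\alpha-2)}{4s_1}},\qquad |u|_\beta^\beta\leq C_\beta\,T(u)^{\frac{d(\beta-2)}{4s_1}},
\]
with constants $C_\alpha,C_\beta$ depending only on $a,d,s_1,\alpha,\beta$. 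Combining these yields
\[
s_1\,T(u)\leq C\Big(T(u)^{\frac{d(\alpha-2)}{4s_1}}+T(u)^{\frac{d(\beta-2)}{4s_1}}\Big).
\]

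The crucial point is that both exponents exceed $1$: from $(G_2)$ we have $\alpha-2>\frac{4s_2}{d}$, hence $\frac{d(\alpha-2)}{4s_1}>\frac{s_2}{s_1}>1$, and a fortiori $\frac{d(\beta-2)}{4s_1}>1$. Dividing the last inequality by $T(u)>0$ therefore forces $T(u)$ to stay away from $0$: were there a sequence $u_n\in\mathcal{P}_{\infty,a}$ with $T(u_n)\to0^+$, the right-hand side would tend to $0$ while the left-hand side equals the fixed positive constant $s_1$, a contradiction. Hence there exists $\delta>0$ with $T(u)\geq\delta$ for all $u\in\mathcal{P}_{\infty,a}$, and consequently $m_a\geq C_0\delta>0$. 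I expect the only delicate step to be verifying the supercriticality threshold $\frac{d(\alpha-2)}{4s_1}>1$; this is precisely where the hypothesis $\alpha>2+\frac{4s_2}{d}$ (together with $s_1<s_2$) is used, and it is what distinguishes the present situation from the $L^2$-subcritical regime, in which $m_a$ could vanish.
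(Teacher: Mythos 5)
Your proof is correct and follows essentially the same route as the paper: both use the constraint $P_\infty(u)=0$ together with the growth bounds of Remark \ref{rem} and the Gagliardo--Nirenberg inequality \eqref{gagliardo} to deduce a uniform positive lower bound on $|\nabla_{s_1}u|_2^2+|\nabla_{s_2}u|_2^2$ over $\mathcal{P}_{\infty,a}$, and then conclude via the coercivity estimate \eqref{Icoer}. The only cosmetic difference is that the paper applies \eqref{gagliardo} separately with $s=s_1$ and $s=s_2$ to bound each gradient norm on its own, whereas you apply it only with $s=s_1$ and argue directly with the sum $T(u)$; both arguments hinge on the same fact that the resulting exponents are supercritical because $\alpha>2+\tfrac{4s_2}{d}$.
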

\begin{proof}
Let $u\in \mathcal{P}_{\infty,a}$. Then, in view of $(G_2)$, we
have
\begin{align*}
s_{1}|\nabla_{s_{1}}u|_{2}^{2}&\leq
s_1|\nabla_{s_{1}}u|_{2}^{2}+s_2|\nabla_{s_{2}}u|_{2}^{2}\leq\frac{d}{2}\int_{\R^{d}}g(u)udx\leq
\frac{d\beta}{2}\int_{\R^d} G(u)dx\\
&\leq C\int_{\R^d}\left(|u|^{\alpha}+ |u|^{\beta} \right)dx.
\end{align*}
Here we used Remark \ref{rem} and the fact that $G(u)\geq 0$. To
estimate the right hand side, we apply \eqref{gagliardo} with
$p+2=\alpha$ and $p+2=\beta$, obtaining
\begin{equation}\label{positive1}
|\nabla_{s_{1}}u|_{2}^{2}\leq
C\left(|\nabla_{s_1}u|_{2}^{\frac{d(\alpha-2)}{2s_1}}+|\nabla_{s_1}u|_{2}^{\frac{d(\beta-2)}{2s_1}}
\right).
\end{equation}
Using the same argument, we get
\begin{equation}\label{positive2}
|\nabla_{s_{2}}u|_{2}^{2}\leq
C\left(|\nabla_{s_{2}}u|_{2}^{\frac{d(\alpha-2)}{2s_{2}}}+|\nabla_{s_{2}}u|_{2}^{\frac{d(\beta-2)}{2s_{2}}}\right).
\end{equation}
Hence, combining \eqref{positive1} and \eqref{positive2} and having
in mind that  $\frac{d(\alpha-2)}{2s_{1}}$,
$\frac{d(\alpha-2)}{2s_2}$, $\frac{d(\beta-2)}{2s_1}$ and
$\frac{d(\beta-2)}{2s_2}$ are strictly larger than $2$, we infer
that
\begin{equation}\label{positive3}
|\nabla_{s_{1}}u|_{2}^{2}+|\nabla_{s_{2}}u|_{2}^{2}\geq \delta,
\end{equation}
for some $\delta>0$. Therefore the result follows by combining
\eqref{Icoer} and \eqref{positive3}.
\end{proof}
\begin{Rek}
Actually, the functional $I$ restricted to $\mathcal{P}_{\infty,a,r}$ is also coercive, therefore, $m_{a,r}>0$.
\end{Rek}

\subsection{The radial case}
It should is clear that the lemmas established so far remain
unchanged if we work in the subspace
$H^{s_1,s_2}_{r}(\mathbb{R}^{d})$.
\begin{Thm}\label{dec}
Suppose that $(G_1)-(G_3)$ hold, then $m_{a,r}$\ is attained, i.e.
there exists $u\in H^{s_1,s_2}_{r}(\mathbb{R}^{d})$, such that
$$m_{a,r}=\displaystyle \inf_{v\in \mathcal{P}_{\infty,a,r}}I(v)=I(u),$$
where $\mathcal{P}_{\infty,a,r}= H^{s_1,s_2}_{r}(\R^{d})\bigcap \mathcal{P}_{\infty,a}.$\\
\end{Thm}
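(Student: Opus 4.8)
The plan is to run a direct minimization on the radial Pohozaev manifold $\mathcal{P}_{\infty,a,r}$, exploiting the compact embedding of Lemma \ref{rad} to pass to the limit in the nonlinear terms, and then to rule out loss of $L^2$ mass by a scaling comparison combined with the strict monotonicity of $a\mapsto m_{a,r}$. First I would pick a minimizing sequence $\{u_n\}\subset \mathcal{P}_{\infty,a,r}$ with $I(u_n)\to m_{a,r}$; this set is nonempty by (the radial form of) Lemma \ref{max}, and $m_{a,r}>0$ is finite. Since $I$ restricted to $\mathcal{P}_{\infty,a,r}$ is coercive (Lemma \ref{coercive} and the ensuing remark), $\{u_n\}$ is bounded in $H^{s_1,s_2}_r(\mathbb{R}^d)$, so up to a subsequence $u_n\rightharpoonup u$ weakly, $u_n\to u$ a.e., and, crucially, $u_n\to u$ strongly in $L^p(\mathbb{R}^d)$ for every $p\in(2,2^{*}_{s_2})$ by Lemma \ref{rad}. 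As $2<\alpha<\beta<\tfrac{2d}{d-2s_1}=2^{*}_{s_1}<2^{*}_{s_2}$ by $(G_2)$, the exponents $\alpha,\beta$ fall in this range, so the growth bounds of Remark \ref{rem} yield $\int_{\mathbb{R}^d}G(u_n)\to\int_{\mathbb{R}^d}G(u)$ and $\int_{\mathbb{R}^d}\widetilde{G}(u_n)\to\int_{\mathbb{R}^d}\widetilde{G}(u)$.

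Next I would show the weak limit is nontrivial. Since $P_\infty(u_n)=0$, we have $d\int_{\mathbb{R}^d}\widetilde{G}(u_n)\,dx = s_1|\nabla_{s_1}u_n|_2^2+s_2|\nabla_{s_2}u_n|_2^2 \ge s_1\bigl(|\nabla_{s_1}u_n|_2^2+|\nabla_{s_2}u_n|_2^2\bigr)\ge s_1\delta>0$, using the uniform lower bound \eqref{positive3} from the proof of Lemma \ref{Ipositive}. Passing to the limit through the strong $L^\alpha$, $L^\beta$ convergence gives $\int_{\mathbb{R}^d}\widetilde{G}(u)\,dx\ge \tfrac{s_1\delta}{d}>0$, hence $u\neq 0$. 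Set $b:=|u|_2^2$; by weak lower semicontinuity of the $L^2$-norm, $b\in(0,a]$.

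The heart of the argument, and the step I expect to be the main obstacle, is excluding the strict loss of mass $b<a$, since the embedding into $L^2$ is \emph{not} compact. I would resolve this as follows. As $u\in S_{b}\cap H^{s_1,s_2}_r$ is nonzero, the radial form of Lemma \ref{max} furnishes a unique $t_u>0$ with $t_u*u\in\mathcal{P}_{\infty,b,r}$, whence $m_{b,r}\le I(t_u*u)$. To bound $I(t_u*u)$ from above, observe that because each $u_n\in\mathcal{P}_{\infty,a,r}$, the value $t=1$ maximizes $t\mapsto I(t*u_n)$ (Lemma \ref{max}), so $I(t_u*u_n)\le I(u_n)$ for every $n$. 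Moreover $t_u*u_n\rightharpoonup t_u*u$ weakly while $t_u*u_n\to t_u*u$ in $L^\alpha$ and $L^\beta$; since $|\nabla_{s_i}(t_u*u)|_2^2=t_u^{2s_i}|\nabla_{s_i}u|_2^2\le t_u^{2s_i}\liminf_n|\nabla_{s_i}u_n|_2^2$ and $\int_{\mathbb{R}^d}G(t_u*u_n)\to\int_{\mathbb{R}^d}G(t_u*u)$, weak lower semicontinuity gives $I(t_u*u)\le\liminf_n I(t_u*u_n)\le\liminf_n I(u_n)=m_{a,r}$. Combining, $m_{b,r}\le m_{a,r}$. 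If $b<a$, the strict monotonicity of $a\mapsto m_{a,r}$ (Lemma \ref{decc} together with the remark following Lemma \ref{con}) forces $m_{b,r}>m_{a,r}$, a contradiction; therefore $b=a$. Consequently $t_u*u\in\mathcal{P}_{\infty,a,r}$, so $m_{a,r}\le I(t_u*u)\le m_{a,r}$, i.e. $I(t_u*u)=m_{a,r}$, and $t_u*u$ is the desired minimizer, which completes the proof.
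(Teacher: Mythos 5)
Your proposal is correct, and at the decisive step---ruling out the loss of $L^2$ mass---it takes a genuinely different route from the paper. The paper records that the weak limit $u$ satisfies $P_\infty(u)\le \liminf_n P_\infty(u_n)=0$, projects $u$ back onto the manifold with a parameter $t_u\in(0,1]$ (remark after Lemma \ref{max}), and then proves the quantitative inequality $I(u)-I(t_u*u)\ge \frac{1-t_u^{2s_2}}{2s_2}P_\infty(u)$; this step requires the $(G_3)$-based monotonicity of $s\mapsto \big(g(s)s-2G(s)\big)/|s|^{2+\frac{4s_2}{d}}$ and an integral identity in the dilation variable. Feeding this into a chain of inequalities built on the weakly lower semicontinuous combination $I-\frac{1}{2s_2}P_\infty$ yields $m_{\mu,r}=m_{a,r}$, whence $\mu=a$ by strict monotonicity. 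You instead fix the dilation $t_u$ furnished by Lemma \ref{max} applied to $u$ at its own mass $b$, and compare along the sequence: since each $u_n$ lies on $\mathcal{P}_{\infty,a,r}$, the map $t\mapsto I(t*u_n)$ is maximized at $t=1$, so $I(t_u*u_n)\le I(u_n)$, and weak lower semicontinuity of the fractional seminorms together with the compact embedding of Lemma \ref{rad} (which makes $\int_{\R^d}G(t_u*u_n)dx\to\int_{\R^d}G(t_u*u)dx$) passes this to the limit, giving $m_{b,r}\le I(t_u*u)\le m_{a,r}$; strict monotonicity (Lemma \ref{decc} and the remark after Lemma \ref{con}) then forces $b=a$, and the same two-sided bound identifies $t_u*u$ as the minimizer. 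Your argument is the more elementary of the two: it only reuses the maximality statement of Lemma \ref{max}, where $(G_3)$ has already been spent, and entirely bypasses the paper's separate pointwise-monotonicity computation and the auxiliary inequality. What the paper's version buys in exchange is the quantitative comparison $I(u)\ge I(t_u*u)+\frac{1-t_u^{2s_2}}{2s_2}P_\infty(u)$ for limits with $P_\infty(u)\le 0$, an estimate of the type it leans on again in the nonradial and potential cases (Sections 3.3 and 4); your proof delivers the theorem itself equally well but produces no such intermediate estimate.
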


\begin{proof}
Let $\{u_n\}\subset \mathcal{P}_{\infty,a,r}$ be a minimizing
sequence of $m_{a,r}$, i.e.
\[I(u_n)\rightarrow m_{a,r},\ P_\infty(u_n)=0,\ |u_n|_2^2=a.\]
 Then, in view of Lemma \ref{coercive},   $\{u_n\}$ is bounded in
$H^{s_1,s_2}_{r}(\mathbb{R}^{d})$. Subtracting if necessary  a
subsequence, $u_n\rightharpoonup u$ in
$H^{s_1,s_2}(\mathbb{R}^{d})$. Thus, using $(G_1)-(G_2)$, we can
deduce that
$$\lim_{n \rightarrow +\infty} \int_{\R^{d}}G(u_n)dx=\int_{\R^{d}}G(u)dx,$$
and
$$\lim_{n \rightarrow +\infty} \int_{\R^{d}}g(u_n) u_ndx=\int_{\R^{d}}g(u)udx.$$
Therefore
\[I(u)\leq \liminf_{n\rightarrow \infty} I(u_n)=m_{a,r},\]
\[P_\infty(u)\leq\liminf_{n\rightarrow \infty}P_\infty(u_n)=0,\]
and
\[|u|_2^2\leq \liminf_{n\rightarrow \infty}|u_n|_2^2=a.\]
Now, from Lemma \ref{Ipositive} and the fact $P_{\infty}(u_n)=0$, we
can deduce that $u\neq 0.$ Then, let $\mu=|u|_{2}^{2}\in (0,a]$.
Observe that from the proof of Lemma \ref{max} and the fact that
$P_\infty(u)\leq 0$, there is $t_u\in (0,1]$ such that
$t_u*u\in \mathcal{P}_{\infty,\mu,r}$. Therefore, having in mind
  $\frac{1-t_{u}^{2s_2}}{2s_2}\leq \frac{1-t_{u}^{2s_1}}{2s_1}$,
  we obtain
\begin{align}\label{in1}
I(u)-I(t_u*u)&= \frac{1-t_{u}^{2s_1}}{2s_1}
s_1|\nabla_{s_1}u|_{2}^{2}+ \frac{1-t_{u}^{2s_2}}{2s_2}
s_2|\nabla_{s_2}u|_{2}^{2}+t_{u}^{-d}\int_{\R^d}
G(t_{u}^{\frac{d}{2}}u)dx-\int_{\R^d} G(u)dx\\
&\geq \frac{1-t_{u}^{2s_2}}{2s_2} \left[
s_1|\nabla_{s_1}u|_{2}^{2}+s_2|\nabla_{s_2}u|_{2}^{2}
\right]+t_{u}^{-d}\int_{\R^d} G(t_{u}^{\frac{d}{2}}u)dx-\int_{\R^d}
G(u)dx\nonumber\\
&= \frac{1-t_{u}^{2s_2}}{2s_2}
P_{\infty}(u)+d(\frac{1-t_{u}^{2s_2}}{4s_2})\int_{\R^d}g(u)udx-(d\frac{1-t_{u}^{2s_2}}{2s_2}+1)\int_{\R^d}G(u)dx\nonumber\\
&+t_{u}^{-d}\int_{\R^d}G(t_{u}^{\frac{d}{2}}u)dx. \nonumber
\end{align}
By $(G_3)$, we have
$$\frac{g(s)s-2G(s)}{|s|^{1+\frac{4s_2}{d}}s} \ \ \mbox{is increasing for} \ \ s\in \R\setminus \{0\},$$
and so
\begin{align*}
&d\left(\frac{1-t_{u}^{2s_2}}{4s_2}\right)\int_{\R^d}g(u)udx-\left(d\frac{1-t_{u}^{2s_2}}{2s_2}+1\right)\int_{\R^d}G(u)dx
+t_{u}^{-d}\int_{\R^d}G(t_{u}^{\frac{d}{2}}u)dx\\
&=\int_{\R^d}
\int_{t_u}^{1}\frac{d}{2}t^{2s_2-1}|u|^{2+\frac{4s_2}{d}}\left[
\frac{g(u)u-2G(u)}{|u|^{2+\frac{4s_2}{d}}}-\frac{g(t^{\frac{d}{2}}u)t^{\frac{d}{2}}u-2G(t^{\frac{d}{2}}u)}{|t^{\frac{d}{2}}u|^{2+\frac{4s_2}{d}}}\right]dt
dx\\
&\geq 0.
\end{align*}
Consequently,
$$I(u)-I(t_u*u)\geq \frac{1-t_{u}^{2s_2}}{2s_2}P_{\infty}(u),$$
which implies, using Lemma \ref{decc}, that
\begin{align*}
m_{\mu,r}\geq m_{a,r}&=\lim_{n\rightarrow \infty}
\Big(I(u_n)-\frac{1}{2s_2}P_\infty(u_n)\Big)\\
&\geq
\liminf_{n\rightarrow \infty}\left[ \left(\frac{s_2-s_1}{2s_2}\right)|\nabla_{s_{1}}u_n|_{2}^{2}+\frac{d}{2s_2}\int_{\R^d} \widetilde{G}(u_n)dx-\int_{\R^d}G(u_n)dx\right]\\
&\geq\left(\frac{s_2-s_1}{2s_2}\right)|\nabla_{s_{1}}u|_{2}^{2}+\frac{d}{2s_2}\int_{\R^d} \widetilde{G}(u )dx-\int_{\R^d}G(u )dx\\
&=I(u)-\frac{1}{2s_2}P_\infty(u)\geq
I(t_u*u)-\frac{t_u^{2s_2}}{2s_2}P_\infty(u)\\
&\geq I(t_u*u) \geq m_{\mu,r}.
\end{align*}
This proves that $m_{\mu,r}=m_{a,r}$. Hence, by Lemma \ref{decc}, we
deduce that $\mu=a$, which means $t_u*u\in \mathcal{P}_{\infty,a,r}$
and $I(t_u*u)=m_{a,r}$. Thus $I$ attains its minimum at $u\in
\mathcal{P}_{\infty, a,r}$. This proves our desired result.
\end{proof}

\subsection{The nonradial case}

In this part, we will try to give a version of Theorem \ref{dec} in
the nonradial Sobolev space $H^{s_1,s_2}(\R^d)$. 
We divide this subsection into two parts. Due to the lack of
compactness, the main purpose of the first part is to classify the
$(PS)_c$-sequences for a suitable functional $I_{\lambda}$. In the second
part, we prove that $m_a:= \inf\limits_{u\in \mathcal{P}_{\infty,a}}
I(u)$ is attained .\\

The main result of this subsection reads as follow.
\begin{Thm}\label{attained}
Suppose that $(G_1)-(G_3)$ hold, 
then $$m_a:=
\inf\limits_{u\in \mathcal{P}_{\infty,a}} I(u),$$ is attained.
\end{Thm}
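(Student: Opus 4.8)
The plan is to sidestep the lack of compactness in the full space altogether by reducing the nonradial problem to the radial one already solved in Theorem~\ref{dec}. Concretely, I would prove the identity $m_a=m_{a,r}$ and then simply invoke the radial minimizer. Since $\mathcal{P}_{\infty,a,r}\subset\mathcal{P}_{\infty,a}$, the inequality $m_a\le m_{a,r}$ is immediate from taking infima over a smaller set. The substance is the reverse inequality $m_a\ge m_{a,r}$, which I would obtain from symmetric decreasing rearrangement combined with the minimax description of $m_a$ provided by Lemma~\ref{max}.

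Here is how I expect the reverse inequality to go. By Lemma~\ref{max}, and by its radial counterpart (valid since every preceding lemma transfers verbatim to $H^{s_1,s_2}_r(\R^d)$), one has $m_a=\inf_{u\in S_a}\max_{t>0}I(t*u)$ and $m_{a,r}=\inf_{v\in S_{a,r}}\max_{t>0}I(t*v)$, where I set $S_{a,r}:=S_a\cap H^{s_1,s_2}_r(\R^d)$. Fix $u\in S_a$ and let $u^*$ denote its symmetric decreasing rearrangement. Then $u^*\in S_{a,r}$: rearrangement preserves the $L^2$-norm, while the fractional Pólya--Szegő inequality gives $|\nabla_{s_i}u^*|_2\le|\nabla_{s_i}u|_2$ for $i=1,2$, so $u^*\in H^{s_1,s_2}_r(\R^d)$. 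Moreover rearrangement commutes with the fiber map, $t*u^*=(t*u)^*$, because the dilation and the positive scalar factor defining $t*$ both commute with $(\cdot)^*$. Finally, since $g$ is odd by $(G_1)$ the primitive $G$ is even, so $G(w)=G(|w|)$ pointwise and equimeasurability yields $\int_{\R^d}G(w^*)\,dx=\int_{\R^d}G(w)\,dx$ for every $w$. Applying this with $w=t*u$ and using Pólya--Szegő on the two Gagliardo seminorms, I obtain
\[
I(t*u^*)=I\big((t*u)^*\big)\le I(t*u)\qquad\text{for all }t>0,
\]
hence $\max_{t>0}I(t*u^*)\le\max_{t>0}I(t*u)$. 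Taking the infimum over $u\in S_a$ and using $\{u^*:u\in S_a\}\subset S_{a,r}$ gives $m_{a,r}\le m_a$.

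Combining the two inequalities yields $m_a=m_{a,r}$. By Theorem~\ref{dec} there is a radial $u_0\in\mathcal{P}_{\infty,a,r}$ with $I(u_0)=m_{a,r}=m_a$; since $\mathcal{P}_{\infty,a,r}\subset\mathcal{P}_{\infty,a}$, this $u_0$ attains $m_a$ and the theorem follows.

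The main nontrivial input along this route is the fractional Pólya--Szegő inequality $|\nabla_{s_i}u^*|_2\le|\nabla_{s_i}u|_2$; everything else is bookkeeping built on the oddness of $g$ and the minimax characterization. If one prefers to avoid rearrangement, the alternative is a direct concentration--compactness argument on a minimizing sequence $\{u_n\}\subset\mathcal{P}_{\infty,a}$, which is bounded by Lemma~\ref{coercive}: vanishing is excluded because a vanishing sequence satisfies $\int_{\R^d}\widetilde G(u_n)\,dx\to0$ by the fractional Lions lemma together with $(G_2)$ and \eqref{gagliardo}, forcing $|\nabla_{s_1}u_n|_2^2+|\nabla_{s_2}u_n|_2^2\to0$ through the constraint $P_\infty(u_n)=0$ and contradicting $m_a>0$ (Lemma~\ref{Ipositive}); after translating to restore a nonzero weak limit $u$, one repeats the $t_u\in(0,1]$ projection computation of Theorem~\ref{dec}, and the remaining step—ruling out dichotomy, i.e.\ showing that no $L^2$-mass escapes to infinity—would be settled using the strict monotonicity of $m$ from Lemma~\ref{decc}. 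In either approach the delicate point is precisely the recovery of a minimizer carrying the full mass $a$ rather than some $\mu<a$.
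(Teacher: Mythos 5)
Your proposal is correct, but it takes a genuinely different route from the paper. The paper never uses rearrangement: it first gives a mountain-pass characterization $\gamma(a)=\inf_{u\in\mathcal{P}_{\infty,a}}I(u)$ (Lemmas \ref{mg1}--\ref{gammainf}), then runs Jeanjean's machinery to produce a Palais--Smale sequence on $S(a)$ at level $\gamma(a)$ with converging Lagrange multipliers (Lemmas \ref{PSF}--\ref{PS}), and finally applies its new splitting theorem for $(PS)_c$-sequences of $I_\lambda$ (Theorem \ref{Palais}) together with the strict subadditivity $\gamma(a)<\gamma(a_1)+\cdots+\gamma(a_k)$ (Lemma \ref{somme}) to force $k=1$, i.e.\ compactness up to translation; the limit then attains $m_a$ and, as a by-product, solves the Euler--Lagrange equation with an explicit multiplier $\lambda_a$. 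Your route instead proves $m_a=m_{a,r}$ and quotes Theorem \ref{dec}: the inequality $m_a\le m_{a,r}$ is trivial, and the reverse one follows from the minimax description in Lemma \ref{max}, the commutation $t*(u^*)=(t*u)^*$, equimeasurability of $\int G$ (legitimate since $g$ odd makes $G$ even and $G\ge 0$), and the fractional P\'olya--Szeg\H{o} inequality for Gagliardo seminorms (applied after replacing $u$ by $|u|$, using $\bigl||u(x)|-|u(y)|\bigr|\le|u(x)-u(y)|$). All these steps are sound, so your argument is correct, considerably shorter, and yields for free a nonnegative, radially decreasing minimizer; it also suffices for the downstream use of Theorem \ref{attained} in Section 4, which only needs attainment of $m_a$. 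What it costs: it imports the fractional P\'olya--Szeg\H{o} inequality as a nontrivial external input, and it exploits symmetry structure (oddness of $g$, quadratic rearrangement-friendly kinetic terms) that the paper's method does not need, so it would not extend to non-odd nonlinearities or to settings where rearrangement fails; it also produces less information, since the Lagrange multiplier and the fact that the minimizer solves the stationary equation must then be recovered separately (via Lemma \ref{caractgam} and the natural-constraint property), and it bypasses Theorem \ref{Palais}, which the authors advertise as a contribution of independent interest. Your fallback concentration--compactness sketch in the closing paragraph is much closer in spirit to what the paper actually does; as you correctly note, the delicate point there is ruling out dichotomy, and that is precisely the role played in the paper by Theorem \ref{Palais} combined with Lemma \ref{somme}.
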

 \subsubsection{Classification of $(PS)_c$-sequences}

 Let $I_{\lambda}:H^{s_1,s_2}(\R^d)\rightarrow \R$ be defined by
 \begin{equation}\label{Ilambda}
 I_\lambda(u)=\frac{1}{2}|\nabla_{s_1}u|_{2}^{2}+\frac{1}{2}|\nabla_{s_2}u|_{2}^{2}+\frac{\lambda}{2}|u|_{2}^{2}- \int_{\R^d} G(u )dx,
 \end{equation}
 where $\lambda>0$.\\

  The main purpose of this part, is to classify $(PS)_c$-sequences for the functional $I_{\lambda}$. Here, we use some ideas coming from
 \cite{benci,bhakta}.

 \begin{Thm}\label{Palais}
Let $\{u_n\}\subset H^{s_1,s_2}(\R^{d})$ be a $(PS)_c$-sequence. Then, there
exists $k\in \mathbb{N}$, $k$ functions $u^{1},..., u^{k}$ in
$H^{s_1,s_2}(\R^{d})$ and a subsequence (still denoted $\{u_n\}$) such
that\\
$(1)$ $I_{\lambda}^{'}(u^{i})=0$, for $i=1,...,k.$\\
$(2)$ $|\nabla_{s_{1}}u_{n}|_{2}^{2}\rightarrow
|\nabla_{s_{1}}u^{1}|_{2}^{2}+...+|\nabla_{s_{1}}u^{k}|_{2}^{2}$ and
$|\nabla_{s_{2}}u_{n}|_{2}^{2}\rightarrow
|\nabla_{s_{2}}u^{1}|_{2}^{2}+...+|\nabla_{s_{2}}u^{k}|_{2}^{2}$.\\
$(3)$ $|u_{n}|_{2}^{2}\rightarrow
|u^{1}|_{2}^{2}+...+|u^{k}|_{2}^{2}$.\\
$(4)$ $I_{\lambda}(u_n) \rightarrow I_{\lambda}(u^{1})+...+I_{\lambda}(u^{k}).$\\
$(5)$ If $k=1$, then there exists a sequence $\{y_n\}\subset \R^{d}$
such that $u_n(x)-u^{1}(x-y_{n})\rightarrow 0$ strongly in
$H^{s_1,s_2}(\R^{d})$.
 \end{Thm}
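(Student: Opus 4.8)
The plan is to follow the classical global-compactness (concentration-compactness) scheme of Benci--Cerami type, as in \cite{benci, bhakta}, exploiting crucially that $I_\lambda$ carries no potential and is therefore invariant under translations. First I would establish that any $(PS)_c$-sequence $\{u_n\}$ is bounded in $H^{s_1,s_2}(\R^d)$. Since $\lambda>0$, the quadratic part of $I_\lambda$ is equivalent to $\|\cdot\|^2$, and combining $I_\lambda(u_n)=c+o(1)$ with $\langle I_\lambda'(u_n),u_n\rangle=o(\|u_n\|)$ and the Ambrosetti--Rabinowitz-type inequality $g(s)s\ge \alpha G(s)$ from $(G_2)$ (with $\alpha>2$) yields a bound of the form $(\tfrac12-\tfrac1\alpha)\min\{1,\lambda\}\|u_n\|^2\le c+o(1)+o(\|u_n\|)$, hence boundedness. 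Passing to a subsequence, $u_n\rightharpoonup u^1$ in $H^{s_1,s_2}(\R^d)$ and a.e.\ in $\R^d$; using the subcritical growth coming from $(G_2)$ together with the local compact embeddings, one shows that the nonlinear term passes to the limit against test functions, so $I_\lambda'(u^1)=0$.

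Next I would set $v_n^1:=u_n-u^1$ and invoke the Brezis--Lieb lemma, in its form for the Gagliardo seminorms $|\nabla_{s_i}\cdot|_2^2$ and the $L^2$-norm, as well as for the functional $\int_{\R^d}G(\cdot)\,dx$ (the latter via the splitting $\int_{\R^d}G(u_n)\,dx=\int_{\R^d}G(u^1)\,dx+\int_{\R^d}G(v_n^1)\,dx+o(1)$, justified by $(G_2)$ and the growth bounds in Remark \ref{rem}). This gives the decompositions in items $(2)$--$(4)$ at this first stage, namely $I_\lambda(u_n)=I_\lambda(u^1)+I_\lambda(v_n^1)+o(1)$, and moreover $I_\lambda'(v_n^1)\to 0$ in the dual, so that $\{v_n^1\}$ is itself a $(PS)$-sequence for $I_\lambda$ at the level $c-I_\lambda(u^1)$ with $v_n^1\rightharpoonup 0$.

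The core of the argument is then a vanishing/non-vanishing dichotomy for $\{v_n^1\}$. If $\sup_{y\in\R^d}\int_{B_R(y)}|v_n^1|^2\,dx\to 0$ for some (hence every) $R>0$, then a Lions-type lemma gives $v_n^1\to 0$ in $L^p(\R^d)$ for every $p\in(2,2^*_{s_2})$, whence $\int_{\R^d}G(v_n^1)\,dx\to 0$ and, testing $I_\lambda'(v_n^1)$ with $v_n^1$, $\|v_n^1\|\to 0$; the process stops and we are in the single-profile case $(5)$. Otherwise there are $\delta>0$ and $\{y_n^1\}\subset\R^d$ with $\int_{B_R(y_n^1)}|v_n^1|^2\,dx\ge\delta$; since $v_n^1\rightharpoonup 0$ one must have $|y_n^1|\to\infty$, and by translation invariance the shifted sequence $v_n^1(\cdot+y_n^1)$ is again a bounded $(PS)$-sequence, now with a nonzero weak limit $u^2$ satisfying $I_\lambda'(u^2)=0$. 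Iterating this construction produces profiles $u^1,u^2,\dots$ and remainders whose energies and norms split additively at each stage.

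Finally I would prove that the procedure terminates after finitely many steps. Each nontrivial critical point $u^i$ ($i\ge 2$) satisfies $\|u^i\|^2\ge\rho$ for a fixed $\rho>0$ (testing $I_\lambda'(u^i)=0$ with $u^i$ and using the Gagliardo--Nirenberg inequality \eqref{gagliardo} exactly as in the proof of Lemma \ref{Ipositive}), and hence $I_\lambda(u^i)\ge\rho'>0$ for a fixed $\rho'$. Because the energy splits as $c=\sum_{i} I_\lambda(u^i)+o(1)$ while the remainder energies remain bounded, only finitely many bubbles can occur, producing the integer $k$ and, by summing the stagewise splittings, conclusions $(2)$--$(4)$; the strong convergence of the terminal remainder yields $(5)$ when $k=1$. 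The main obstacle I anticipate is the bookkeeping of the dichotomy in the mixed-order fractional setting: verifying the Brezis--Lieb splitting simultaneously for both seminorms and for $\int_{\R^d}G$, and establishing the uniform lower bound $\rho'$ that forces termination, must be handled carefully because the two fractional orders scale differently and the general nonlinearity $g$ is controlled only through the two-sided bounds of $(G_2)$.
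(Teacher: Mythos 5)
Your proposal is correct and follows essentially the same route as the paper's own proof: boundedness via the Ambrosetti--Rabinowitz-type bound from $(G_2)$, Brezis--Lieb splitting of the norms and of $\int_{\R^d}G(\cdot)\,dx$, the vanishing/non-vanishing dichotomy via the Lions-type lemma (the paper's Lemma \ref{lions}) combined with translation invariance to extract successive profiles, and a uniform lower bound for nontrivial critical points (the paper's Lemma \ref{arret}) to force termination. The only cosmetic difference is that you stop the iteration through an energy lower bound $I_\lambda(u^i)\geq \rho'>0$ deduced from the norm bound, while the paper uses the norm splitting \eqref{p7} together with Lemma \ref{arret} directly; the two are equivalent.
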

 Theorem \ref{Palais} is also called Representation Lemma in some references. To prove Theorem \ref{Palais}, we first give some auxiliary lemmas.
 \begin{Lem}\label{lions}
Let $t>0$ and $2\leq q<2^{\ast}_{s_{2}}$. If $\{u_n\}$ is a bounded
sequence in $H^{s_{1},s_{2}}(\R^{d})$ and if
$$\displaystyle \sup_{y\in \R^{d}}\int_{B(y,t)}|u_n|^{q}dx \rightarrow 0 \ \ \mbox{as} \ \ n\rightarrow +\infty,$$
then $\{u_n\}\rightarrow 0$ in $L^{r}(\R^{d})$ for all $r\in
(2,2^{\ast}_{s_{2}})$. 
 \end{Lem}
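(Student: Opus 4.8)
The plan is to adapt Lions' vanishing lemma to the mixed fractional space by a covering argument. First I would fix a countable covering of $\mathbb{R}^d$ by balls $\{B_i=B(y_i,t)\}$ of radius $t$ with finite overlap, i.e. every point of $\mathbb{R}^d$ belongs to at most $\kappa=\kappa(d)$ of the $B_i$. The two facts I rely on are the local fractional Sobolev embedding $H^{s_2}(B_i)\hookrightarrow L^{2^{*}_{s_2}}(B_i)$ on each ball and the elementary subadditivity $\sum_i [u]_{H^{s_2}(B_i)}^2\le \kappa\,[u]_{H^{s_2}(\mathbb{R}^d)}^2$, which holds because the doubly-local Gagliardo integrals over $B_i\times B_i$ are dominated, up to the overlap constant, by the global one. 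Since $\{u_n\}$ is bounded in $H^{s_1,s_2}(\mathbb{R}^d)$, the quantities $\|u_n\|_{H^{s_2}(B_i)}^2$ are summable with a bound independent of $n$.

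Next, I would prove the conclusion first for a single exponent $p_0$ chosen close to $2^{*}_{s_2}$. For $q<p_0<2^{*}_{s_2}$, Hölder interpolation of Lebesgue norms on each ball gives $\|u_n\|_{L^{p_0}(B_i)}\le \|u_n\|_{L^q(B_i)}^{1-\sigma}\|u_n\|_{L^{2^{*}_{s_2}}(B_i)}^{\sigma}$ with $\tfrac{1}{p_0}=\tfrac{1-\sigma}{q}+\tfrac{\sigma}{2^{*}_{s_2}}$, and then the local Sobolev embedding replaces $\|u_n\|_{L^{2^{*}_{s_2}}(B_i)}$ by $C\|u_n\|_{H^{s_2}(B_i)}$. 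Raising to the power $p_0$ yields $\int_{B_i}|u_n|^{p_0}\le C\big(\int_{B_i}|u_n|^{q}\big)^{\gamma}\|u_n\|_{H^{s_2}(B_i)}^{\sigma p_0}$ with $\gamma=(1-\sigma)p_0/q>0$. The decisive point is that $\sigma p_0\to 2^{*}_{s_2}>2$ as $p_0\uparrow 2^{*}_{s_2}$, so for $p_0$ close enough to $2^{*}_{s_2}$ one has $\sigma p_0\ge 2$; then $\|u_n\|_{H^{s_2}(B_i)}^{\sigma p_0}=\big(\|u_n\|_{H^{s_2}(B_i)}^2\big)^{\sigma p_0/2}$ carries an exponent $\ge 1$. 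Summing over $i$, pulling $\sup_i\int_{B_i}|u_n|^q$ out of the $L^q$ factor and using $\sum_i a_i^{\theta}\le(\sum_i a_i)^{\theta}$ for $\theta\ge1$ on the seminorm factor, I obtain $\int_{\mathbb{R}^d}|u_n|^{p_0}\le C\big(\sup_{y}\int_{B(y,t)}|u_n|^q\big)^{\gamma}\to 0$.

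Finally, for an arbitrary target $r\in(2,2^{*}_{s_2})$ I would choose the auxiliary exponent $p_0$ above with $\max\{r,q\}<p_0<2^{*}_{s_2}$ and $\sigma p_0\ge2$; the previous step gives $u_n\to0$ in $L^{p_0}$, and since $2<r<p_0$ the classical interpolation $\|u_n\|_r\le\|u_n\|_2^{1-\nu}\|u_n\|_{p_0}^{\nu}$ together with the boundedness of $\{u_n\}$ in $L^2(\mathbb{R}^d)$ (the $L^2$-norm being part of $\|\cdot\|_{H^{s_1,s_2}}$) forces $u_n\to0$ in $L^r$. The main obstacle is precisely the summation step: the naive Hölder interpolation produces exponents satisfying $a+c=1$, so splitting the product by Hölder in sequence spaces loses the smallness at this borderline. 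This is why one must route the critical factor through the localized $H^{s_2}$-seminorm, whose square is summable with the finite-overlap constant, and restrict the direct argument to exponents near $2^{*}_{s_2}$ before propagating to the whole range $(2,2^{*}_{s_2})$ by interpolation.
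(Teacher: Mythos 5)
Your proof is correct and follows essentially the same route as the paper: both arguments discard the $s_1$-component entirely (boundedness in $H^{s_1,s_2}(\R^d)$ is used only to obtain boundedness in $H^{s_2}(\R^d)$, including the $L^2$ bound needed at the end) and then run the fractional Lions vanishing lemma in $H^{s_2}$. The paper's proof consists of exactly this reduction followed by a citation of Lemma 2.1 in \cite{bhakta}, whereas you write out the standard proof behind that citation — finite-overlap covering by balls of radius $t$, ball-wise H\"older interpolation routed through the local embedding $H^{s_2}(B_i)\hookrightarrow L^{2^{*}_{s_2}}(B_i)$ with the exponent $\sigma p_0\ge 2$ so that the localized seminorms can be summed, and a final $L^2$--$L^{p_0}$ interpolation to reach every $r\in(2,2^{*}_{s_2})$ — so the content matches.
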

 \begin{proof}
 It is easy to see that $\{u_n\}$ is bounded in $H^{s_{2}}(\R^{d})$,
 then the rest of the proof is similar to Lemma $2.1$ in
 \cite{bhakta}.
 \end{proof}
 \begin{Lem}
 Suppose that $(G_1)$-$(G_3)$ hold, then $g:H^{s_1,s_2}\rightarrow (H^{s_1,s_2})^*, u\rightarrow g(u)$ is a compact operator.
 \end{Lem}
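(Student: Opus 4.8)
The plan is to prove that $g$ is completely continuous, that is, to show that whenever $u_n\rightharpoonup u$ weakly in $H^{s_1,s_2}(\R^d)$ the images $g(u_n)$ converge strongly to $g(u)$ in $(H^{s_1,s_2}(\R^d))^{*}$. First I would record the pointwise growth of $g$. Combining $(G_2)$ with Remark \ref{rem}, one has $0\leq g(s)s\leq \beta G(s)\leq C(|s|^{\alpha}+|s|^{\beta})$, so that $|g(s)|\leq C(|s|^{\alpha-1}+|s|^{\beta-1})$ for all $s\in\R$. Since $(G_2)$ forces $2<\alpha<\beta<2^{*}_{s_1}<2^{*}_{s_2}$, the embedding $H^{s_1,s_2}(\R^d)\hookrightarrow L^{q}(\R^d)$ from Section $2$ is available for $q\in\{\alpha,\beta\}$. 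Splitting $g(u)=g(u)\mathbf{1}_{\{|u|\leq 1\}}+g(u)\mathbf{1}_{\{|u|>1\}}$ and using that the first regime is dominated by $|u|^{\alpha-1}$ and the second by $|u|^{\beta-1}$, one gets $g(u)\in L^{\alpha'}(\R^d)+L^{\beta'}(\R^d)$ with $\alpha'=\frac{\alpha}{\alpha-1}$, $\beta'=\frac{\beta}{\beta-1}$, and this sum space embeds continuously into $(H^{s_1,s_2}(\R^d))^{*}$. By Hölder's inequality against $v\in H^{s_1,s_2}(\R^d)$ with $\|v\|\leq 1$, it therefore suffices to prove that $g(u_n)\to g(u)$ in $L^{\alpha'}(\R^d)+L^{\beta'}(\R^d)$.

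Next I would establish the convergence locally. Fix $R>0$. On the ball $B_R$ the embedding $H^{s_1,s_2}(B_R)\hookrightarrow L^{q}(B_R)$ is compact for $q<2^{*}_{s_2}$, hence $u_n\to u$ strongly in $L^{\alpha}(B_R)\cap L^{\beta}(B_R)$ and, along a subsequence, a.e.\ in $B_R$. Consequently $|u_n|^{\alpha}\to|u|^{\alpha}$ and $|u_n|^{\beta}\to|u|^{\beta}$ in $L^{1}(B_R)$, so both families are uniformly integrable on $B_R$. Because of the two growth regimes above, $|g(u_n)\mathbf{1}_{\{|u_n|\leq1\}}|^{\alpha'}\leq C|u_n|^{\alpha}$ and $|g(u_n)\mathbf{1}_{\{|u_n|>1\}}|^{\beta'}\leq C|u_n|^{\beta}$, so the Vitali convergence theorem (equivalently, continuity of the Nemytskii operator between Lebesgue spaces on a bounded domain) yields $g(u_n)\to g(u)$ in $(L^{\alpha'}+L^{\beta'})(B_R)$.

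The hard part will be upgrading this local statement to convergence on all of $\R^d$, since the embedding into $L^{q}(\R^d)$ is \emph{not} compact on the full space: one must rule out mass of $|u_n|^{\alpha}$ and $|u_n|^{\beta}$ escaping to infinity, i.e.\ obtain uniform integrability of these families near infinity so that the tail $\int_{|x|>R}|g(u_n)-g(u)|$-contribution to the dual norm is small uniformly in $n$. In the radial setting this obstacle disappears: Lemma \ref{rad} provides the compact embedding $H^{s_1,s_2}_{r}(\R^d)\hookrightarrow\hookrightarrow L^{q}(\R^d)$ for every $q\in(2,2^{*}_{s_2})$, which forces $u_n\to u$ strongly in $L^{\alpha}(\R^d)\cap L^{\beta}(\R^d)$ directly and kills the tails, giving the full strong convergence $g(u_n)\to g(u)$ in $(H^{s_1,s_2}(\R^d))^{*}$.

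In the nonradial situation I would control the escaping mass through the Gagliardo--Nirenberg inequality \eqref{gagliardo} together with the vanishing alternative of Lemma \ref{lions}: on the tail region $\{|x|>R\}$ the $L^{q}$-mass is either uniformly small, in which case the tail contribution is negligible and the previous step closes the argument by letting $R\to\infty$, or it is detected by a nonvanishing concentration, which is precisely the piece that the splitting procedure of Theorem \ref{Palais} subtracts off. Thus the compactness of $g$ is the exact mechanism that, combined with Lemma \ref{lions}, drives the iterative decomposition: each time the residual sequence vanishes in the sense of Lemma \ref{lions}, the corresponding nonlinear term tends to zero in the dual, and the induction terminates after finitely many steps.
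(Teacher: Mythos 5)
Your first three paragraphs are essentially correct, and they reproduce the argument the paper is actually invoking: the paper's entire proof is a citation of Lemma 2.6 of \cite{jeanjean}, and that argument rests on the compact embedding of the \emph{radial} Sobolev space into $L^{q}$; transplanted here (via Lemma~\ref{rad}), it yields the growth bound $|g(s)|\leq C(|s|^{\alpha-1}+|s|^{\beta-1})$, the embedding $L^{\alpha'}(\R^d)+L^{\beta'}(\R^d)\hookrightarrow (H^{s_1,s_2}(\R^d))^{*}$, and strong dual convergence of $g(u_n)$ along weakly convergent sequences in $H^{s_1,s_2}_{r}(\R^d)$. So the radial case you settle is exactly what the cited argument proves, no more.

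The genuine gap is your last paragraph, and it cannot be repaired, because on the whole space the statement is false: compactness is destroyed by translation invariance. Fix $u\in C_c^\infty(\R^d)\setminus\{0\}$ and set $u_n(x)=u(x-y_n)$ with $|y_n|\rightarrow\infty$. Then $\{u_n\}$ is bounded and $u_n\rightharpoonup 0$ in $H^{s_1,s_2}(\R^d)$ (a standard fact for translates), while by translation invariance, $(G_2)$ and Remark~\ref{rem},
\begin{equation*}
\langle g(u_n),u_n\rangle=\int_{\R^d}g(u)u\,dx\geq \alpha\int_{\R^d}G(u)\,dx>0\quad\mbox{for every } n.
\end{equation*}
If some subsequence $g(u_{n_k})$ converged strongly in $(H^{s_1,s_2}(\R^d))^{*}$ to some $w$, then $\langle g(u_{n_k}),u_{n_k}\rangle\rightarrow\langle w,0\rangle=0$ (strong--weak pairing with $\sup_k\|u_{n_k}\|<\infty$), contradicting the displayed lower bound; hence $\{g(u_n)\}$ has no convergent subsequence and $g$ is not a compact operator on all of $H^{s_1,s_2}(\R^d)$. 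Your proposed fix is, in addition, circular: you appeal to the vanishing dichotomy of Lemma~\ref{lions} and to the splitting of Theorem~\ref{Palais}, but Theorem~\ref{Palais} is proved after, and with the help of, the present lemma; what its proof actually uses in the nonradial setting is local compactness (to pass to the limit in $I_\lambda'$ and to justify the Br\'ezis--Lieb type splittings) together with Lemma~\ref{lions} --- never global compactness of $g$. The honest conclusion, which your counterpart of Jeanjean's argument does deliver, is the lemma stated on $H^{s_1,s_2}_{r}(\R^d)$, or else the weaker local/vanishing statements that the nonradial analysis genuinely requires.
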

 \begin{proof}
The proof follows by applying the same argument used in the proof of
Lemma $2.6$ in \cite{jeanjean}.
 \end{proof}
\begin{Lem}\label{arret}
Let $u\in H^{s_1,s_2}(\R^{d})\setminus \{0\}$ be a critical point of $I_{\lambda}$.
Then, there exists a positive constant $M>0$ independent of $u$ such
that
$$\|u\|\geq M.$$
\end{Lem}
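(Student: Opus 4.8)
The plan is to test the Euler--Lagrange equation satisfied by the critical point $u$ against $u$ itself, extract a Nehari-type identity, and then bound the nonlinear term from above by a superquadratic power of $\|u\|$; comparing the two sides will force $\|u\|$ to stay away from $0$.

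First I would record that, since $u$ is a critical point of $I_\lambda$, we have $I_\lambda'(u)=0$ in $(H^{s_1,s_2}(\R^d))^*$, so choosing the test function $\varphi=u$ yields
\begin{equation}\label{arret1}
|\nabla_{s_1}u|_2^2+|\nabla_{s_2}u|_2^2+\lambda|u|_2^2=\int_{\R^d}g(u)u\,dx.
\end{equation}
Because $\lambda>0$, the left-hand side of \eqref{arret1} is bounded below by $\min\{1,\lambda\}\|u\|^2$, recalling that $\|u\|^2=|\nabla_{s_1}u|_2^2+|\nabla_{s_2}u|_2^2+|u|_2^2$.

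Next I would estimate the right-hand side of \eqref{arret1} from above. By $(G_2)$ we have $g(u)u\le\beta G(u)$, and Remark \ref{rem}$(2)$ gives $G(u)\le C_2(|u|^\alpha+|u|^\beta)$, whence
\[\int_{\R^d}g(u)u\,dx\le \beta C_2\left(|u|_\alpha^\alpha+|u|_\beta^\beta\right).\]
Since $2<\alpha<\beta<\frac{2d}{d-2s_1}=2^*_{s_1}<2^*_{s_2}$, both exponents lie in the admissible range for the embedding $H^{s_1,s_2}(\R^d)\hookrightarrow L^q(\R^d)$; hence $|u|_\alpha\le C\|u\|$ and $|u|_\beta\le C\|u\|$, and we obtain $\int_{\R^d}g(u)u\,dx\le C(\|u\|^\alpha+\|u\|^\beta)$ for a constant $C$ depending only on $\alpha,\beta,d,s_1,s_2$ but not on $u$.

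Finally, combining the two bounds gives $\min\{1,\lambda\}\|u\|^2\le C(\|u\|^\alpha+\|u\|^\beta)$. As $u\neq 0$, dividing by $\|u\|^2$ produces
\[\min\{1,\lambda\}\le C\left(\|u\|^{\alpha-2}+\|u\|^{\beta-2}\right),\]
and since $\alpha-2>0$ and $\beta-2>0$ the right-hand side tends to $0$ as $\|u\|\to 0$. Consequently $\|u\|$ cannot be arbitrarily small: distinguishing the cases $\|u\|\le 1$ (where $\|u\|^{\beta-2}\le\|u\|^{\alpha-2}$) and $\|u\|>1$ yields the explicit lower bound $M=\min\{1,(\min\{1,\lambda\}/2C)^{1/(\alpha-2)}\}>0$, which is independent of the particular critical point $u$. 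This argument is essentially routine; the only points that require care are verifying that $\alpha,\beta$ stay strictly below the critical exponent $2^*_{s_2}$ so the Sobolev embedding applies, and using $\lambda>0$ to control the $L^2$ part of the norm, so no genuine obstacle arises.
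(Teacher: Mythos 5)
Your proof is correct and follows essentially the same route as the paper: both test $I_\lambda'(u)=0$ against $u$ to get the Nehari identity, bound $\int_{\R^d}g(u)u\,dx$ by $C\left(|u|_\alpha^\alpha+|u|_\beta^\beta\right)$ via $(G_2)$ and Remark \ref{rem}, and use the Sobolev embedding (the paper packages this as the constants $M'_\alpha,M'_\beta$ for the $\lambda$-weighted quadratic form, you as $\min\{1,\lambda\}$ times the standard embedding constants) to obtain $\|u\|^2\leq C\left(\|u\|^\alpha+\|u\|^\beta\right)$ with $\alpha,\beta>2$, which forces $\|u\|$ away from zero. Your write-up is in fact slightly more careful than the paper's, since you make the final case distinction and the explicit value of $M$ precise.
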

\begin{proof}
Fix $\lambda>0$, by the Sobolev embedding theorem, we can put
$$M'_p=\inf\{|\nabla_{s_{1}}v|_{2}^{2}+|\nabla_{s_{2}}v|_{2}^{2}+\lambda \int_{\R^{d}}|v(x)|^{2}dx: v\in H^{s_1,s_2}(\R^{d}), \ \ \int_{\R^{d}}|v(x)|^{p}dx=1 \}.$$
We can see that $M'_p>0$. Then
\begin{equation}\label{arret1}
|\nabla_{s_{1}}u|_{2}^{2}+|\nabla_{s_{2}}u|_{2}^{2}+\lambda
\int_{\R^{d}}|u(x)|^{2}dx\geq M'_\alpha|u|_\alpha^2.
\end{equation}
\begin{equation}\label{arret2}
|\nabla_{s_{1}}u|_{2}^{2}+|\nabla_{s_{2}}u|_{2}^{2}+\lambda
\int_{\R^{d}}|u(x)|^{2}dx\geq M'_\beta|u|_\beta^2.
\end{equation}
On the other hand, if $u$ is a critical point of $I_\lambda$, we have
\begin{equation}\label{arret3}
|\nabla_{s_{1}}u|_{2}^{2}+|\nabla_{s_{2}}u|_{2}^{2}+\lambda
\int_{\R^{d}}|u(x)|^{2}dx=\int_{\R^{d}}g(u)udx\leq \beta\int_{\R^{d}}G(u)dx\leq C(|u|_\alpha^\alpha+|u|_\beta^\beta).
\end{equation}
Hence, combining \eqref{arret1}, \eqref{arret2} and \eqref{arret3}, we obtain
$$\|u\|^2\leq C\left( \frac{\|u\|^\alpha}{(M_\alpha')^\frac{\alpha}{2}}+ \frac{\|u\|^\beta}{(M_\beta')^\frac{\beta}{2}} \right).$$
Thus, there exists $M>0$, such that $\|u\|\geq M$.
\end{proof}
\\
\textbf{ Proof of
Theorem \ref{Palais}:}\\
\textbf{Claim $1$:} The $(PS)_c$-sequence $\{u_n\}$ is bounded in
$H^{s_{1},s_{2}}(\R^{d})$. We have:
\begin{align*}
c+o(1)\|u_n\|&\geq I_{\lambda}(u_n)-\frac{1}{\alpha}\langle I^{'}_{\lambda}(u_n),u_n\rangle\\
&=\left(\frac{1}{2}-\frac{1}{\alpha}\right)\left(|\nabla_{s_1} u|_2^2+|\nabla_{s_2} u|_2^2+\lambda|u|_2^2\right)+\int_{\R^d} \left(\frac{1}{\alpha}g(u)u-G(u)\right)dx\\
&\geq C\|u\|^2.
\end{align*}
Hence boundedness follows. Thus we may extract a subsequence $\{u_n\}$
such that
$$u_n\rightharpoonup u^0\ \ \mbox{in} \ \ H^{s_{1},s_{2}}(\R^{d}), $$
and $$u_n(x) \rightarrow u^{0}(x) \ \ \mbox{a.e. in} \ \ \R^{d}.$$
By standard arguments, $u$ is a critical point of $I_{\lambda}.$ Now
let
$$\Psi^{1}_{n}(x)= (u_n-u^{0})(x), \ \ x\in \R^{d}.$$
Then
\begin{equation}\label{pp1}
\Psi_{n}^{1} \rightharpoonup 0 \ \ \mbox{in} \ \ H^{s_{1},s_{2}}(\R^{d}).
\end{equation}
Furthermore, by applying the Br$\acute{\text{e}}$zis-Lieb lemma \cite{brezis}, we
get
\begin{equation}\label{pp2}
\|\Psi_{n}^{1}\|^{2}= \|u_{n} \|^{2}-\|u^{0}\|^{2}+o(1),
\end{equation}
and
\begin{equation}\label{p3}
\int_{\R^d}G(\Psi_{n}^{1})dx=\int_{\R^d}G(u_n)dx-\int_{\R^d}G(u^0)dx+o(1).
\end{equation}
Thus, by taking into account of \eqref{pp1}, \eqref{pp2} and
\eqref{p3}, we infer that
\begin{equation}\label{p4}
I_{\lambda}(\Psi_{n}^{1})=I_{\lambda}(u_n)-I_{\lambda}(u^{0})+o(1),
\end{equation}
and
\begin{equation}\label{p5}
I_{\lambda}'(\Psi_{n}^{1})=I_{\lambda}'(u_n)-I_{\lambda}'(u^0)+o(1).
\end{equation}
Now, using Lemma \ref{lions} we have, either $\Psi_{n}^{1}
\rightarrow 0 $ in $H^{s_1,s_2}(\R^{d})$, in that case the proof is
over or there exists $\alpha^{'}>0$, such that up to a subsequence
$$\displaystyle \sup_{y\in \R^{d}}\int_{B(y,1)}{|\Psi_{n}^{1}(x)|}^{2}dx>\alpha^{'}>0.$$
Therefore we can find a sequence $\{y_n\}\subset \R^{d}$ such that
\begin{equation}\label{p6}
\int_{B(y_n,1)} {|\Psi_{n}^{1}(x)|}^{2}dx\geq \alpha^{'}.
\end{equation}
It follows,  from the fact that $\Psi_{n}^{1}\rightharpoonup 0$ in
$H^{s_1,s_2}(\R^{d})$ and \eqref{p6}, that
$$|y_n|\rightarrow+\infty \ \ \mbox{as} \ \ n\rightarrow +\infty.$$
Let us now call $u^1$ the weak limit in $H^{s_1,s_2}(\R^{d})$ of the
sequence $\Psi_{n}^{1}(.+y_n)$.\\
\textbf{Claim $2$:} the weak limit $u^1\neq 0.$ Indeed, since
$H^{s_1,s_2}(B(0,1))\hookrightarrow L^{2}(B(0,1))$ compactly
embedded,  \eqref{p6} concludes the claim. Moreover, by
standard argument, $u^{1}$ is a critical point of $I_{\lambda}$.\\
Iterating this procedure, we obtain sequences
$\Psi_{n}^{j}=\Psi^{j-1}_{n}(x+y_n)-u^{j-1}$ $j\geq 2$ and sequences
of points $y_{n}^{j}$ such that $|y_{n}^{j}|\rightarrow +\infty$ as
$n\rightarrow +\infty$ and
$$\Psi_{n}^{j}(\cdot+y_{n}^{j})\rightharpoonup u^{j} \ \ \mbox{in} \ \ H^{s_1,s_2}(\R^{d}), $$
where each $u^j$ is a critical point of $I_{\lambda}$. Moreover, in
view of \eqref{pp2} and \eqref{p4}, we get
\begin{equation} \label{p7}
\|\Psi_{n}^{j}\|^{2}=\|\Psi_{n}^{j-1}\|^{2}-\|u^{j-1}\|^{2}+o(1)=\|u_n\|^{2}-\|u^0\|^{2}-\displaystyle
\sum_{i=1}^{j-1}\|u^{i}\|^{2}+o(1),
\end{equation}
and
\begin{equation}\label{p8}
I(\Psi_{n}^{j})=I(\Psi_{n}^{j-1})-I(u^{j-1})+o(1)=I(u_n)-I(u_0)-
\displaystyle \sum_{i=1}^{j-1} I(u^{i})+o(1).
\end{equation}
Thus  Lemma \ref{arret}, Claim $1$, and \eqref{p7} tell us that the
iteration must terminate at some index $k\geq 0$.\\
If \ \ $k=0$, \ \ put \ \ $u_{n}^{0}\equiv u_{n}$.\\
If \ \ $k>0,$ \ \ put \ \ $u_{n}^{k}\equiv\Psi^{k}_{n}(.+y_{n}^{k})$, $u_{n}^{i}\equiv\Psi_{n}^{i}(.+y_{n}^{i})-\displaystyle
\sum_{j=i+1}^{k}u_{n}^{j}(\cdot-y_{n}^{j})$, for $0<i<k$, and $$u_{n}^{0}\equiv u_{n}-\displaystyle
\sum_{j=1}^{k}u_{n}^{j}(\cdot-y_{n}^{j}).$$\\
Then, the sequences $\{u_{n}^{j}\}, \ \ 0\leq j\leq K$ are the desired
sequences.

\subsection{Proof of Theorem \ref{attained}}
The purpose of this part is to conclude the proof of Theorem
\ref{attained}. The main idea is to give a mountain pass
characterization of $m_a$ which will be helpful to prove our desired
result. Here, we use some ideas coming from \cite{jeanjean}. 
\\

We need the following definition to introduce our variational
procedure.\\

$\bullet$ $\mathcal{H}: H^{s_1,s_2}(\R^d)\times \R \rightarrow
H^{s_1,s_2}(\R^d)$ with
$$\mathcal{H}(u,t)(x)=e^{\frac{dt}{2}}u(e^{t}x).$$

It is clear that $\left|\mathcal{H}(u,t)\right|_2=|u|_2$, for any $t\in \mathbb{R}$. We start by proving some technical lemmas which will be useful in the sequel.
\begin{Lem}\label{mg1}
Assume that $(G_1)-(G_2)$ hold and let $u\in S(a)$ be arbitrary.
Then we have:\\
$(1)$
$|\nabla_{s_1}\mathcal{H}(u,t)|_{2}+|\nabla_{s_2}\mathcal{H}(u,t)|_{2}
\rightarrow 0$, and $I(\mathcal{H}(u,t))\rightarrow 0$  as $t
\rightarrow -\infty$.\\
$(2)$
$|\nabla_{s_1}\mathcal{H}(u,t)|_{2}+|\nabla_{s_2}\mathcal{H}(u,t)|_{2}
\rightarrow +\infty$, and $I(\mathcal{H}(u,t))\rightarrow -\infty$
as $t \rightarrow +\infty$.
\end{Lem}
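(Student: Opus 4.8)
The key observation is that $\mathcal{H}(u,t) = (e^t)*u$, where $*$ denotes the fiber map of Section $2$, so the whole statement reduces to tracking the behavior as the scaling parameter $\tau := e^t$ runs to $0^+$ and to $+\infty$. The plan is to turn every quantity into an explicit exponential in $t$ and then to compare exponents using $(G_2)$. First I would record the scaling of the Gagliardo seminorms: exactly as in the computation of $P_\infty(t*u)$ in Lemma \ref{max}, the change of variables $y=e^t x$ gives
$$|\nabla_{s_i}\mathcal{H}(u,t)|_2^2 = e^{2s_i t}|\nabla_{s_i}u|_2^2, \qquad i=1,2.$$
Since $s_1,s_2>0$, this at once yields the two seminorm assertions, the sum tending to $0$ as $t\to-\infty$ and to $+\infty$ as $t\to+\infty$.

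Next I would treat the nonlinear term. The same substitution $y=e^t x$ gives
$$\int_{\R^d} G(\mathcal{H}(u,t))\,dx = e^{-dt}\int_{\R^d} G\big(e^{\frac{dt}{2}}u(y)\big)\,dy,$$
and to control the integrand I would apply Remark \ref{rem}$(1)$ with scaling factor $s=e^{dt/2}$. For $t\le 0$ (so $s\le 1$) this gives $G(e^{\frac{dt}{2}}u)\le e^{\frac{d\alpha}{2}t}G(u)$, hence
$$0 \le \int_{\R^d} G(\mathcal{H}(u,t))\,dx \le e^{d(\frac{\alpha}{2}-1)t}\int_{\R^d}G(u)\,dx,$$
the left inequality coming from $G\ge 0$ in Remark \ref{rem}$(2)$. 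As $\alpha>2$, the exponent $d(\frac{\alpha}{2}-1)$ is positive, so the right-hand side tends to $0$ as $t\to-\infty$. Combined with the seminorm decay this gives $I(\mathcal{H}(u,t))\to 0$, which is assertion $(1)$.

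For assertion $(2)$ I would use the reverse bound: for $t\ge 0$ (so $s=e^{dt/2}\ge 1$), Remark \ref{rem}$(1)$ gives $G(e^{\frac{dt}{2}}u)\ge e^{\frac{d\alpha}{2}t}G(u)$, whence
$$I(\mathcal{H}(u,t)) \le \tfrac{1}{2}e^{2s_1 t}|\nabla_{s_1}u|_2^2 + \tfrac{1}{2}e^{2s_2 t}|\nabla_{s_2}u|_2^2 - e^{d(\frac{\alpha}{2}-1)t}\int_{\R^d}G(u)\,dx.$$
Here $u\in S(a)$ with $a>0$ forces $u\neq 0$, so $\int_{\R^d}G(u)\,dx>0$ by the lower bound in Remark \ref{rem}$(2)$. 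The main (and essentially only) obstacle is the exponent comparison, and this is precisely where $(G_2)$ is used: the condition $\alpha>2+\frac{4s_2}{d}$ is equivalent to $d(\frac{\alpha}{2}-1)>2s_2>2s_1$, so the negative exponential grows strictly faster than both kinetic terms. Therefore $I(\mathcal{H}(u,t))\to-\infty$ as $t\to+\infty$, completing the proof.
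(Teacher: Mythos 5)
Your proposal is correct and follows essentially the same route as the paper: the same scaling identities $|\nabla_{s_i}\mathcal{H}(u,t)|_2^2=e^{2s_it}|\nabla_{s_i}u|_2^2$, the same use of Remark \ref{rem} to bound $\int_{\R^d}G(\mathcal{H}(u,t))\,dx$ by $e^{d(\frac{\alpha}{2}-1)t}\int_{\R^d}G(u)\,dx$ from the appropriate side, and the same exponent comparison $d(\frac{\alpha}{2}-1)>2s_2>2s_1$ coming from $\alpha>2+\frac{4s_2}{d}$. If anything, you are slightly more careful than the paper in making explicit that $\int_{\R^d}G(u)\,dx>0$ for $u\in S(a)$, which is indeed needed for the conclusion of part $(2)$.
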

\begin{proof}
By a straightforward calculation, it follows that
\begin{equation}\label{mg11}
\int_{\R^{d}} |\mathcal{H}(u,t)(x)|^{2}dx=a^{2} \ \ \mbox{and} \ \
|\nabla_{s_1}\mathcal{H}(u,t)|_{2}^{2}+|\nabla_{s_2}\mathcal{H}(u,t)|_{2}^{2}=e^{2ts_1}
|\nabla_{s_1}u|_{2}^{2}+e^{2ts_2}|\nabla_{s_2}u|_{2}^{2} .
\end{equation}
Therefore
$$|\nabla_{s_1}\mathcal{H}(u,t)|_{2}+|\nabla_{s_2}\mathcal{H}(u,t)|_{2}
\rightarrow 0 \ \mbox{as} \ \ t \rightarrow -\infty.$$
 Using again \eqref{mg11} and Remark \ref{rem}, we find that
\begin{align*}
|I(\mathcal{H}(u,t))|&=\left|\frac{1}{2}
|\nabla_{s_1}\mathcal{H}(u,t)|_{2}^{2}+\frac{1}{2}|\nabla_{s_2}\mathcal{H}(u,t)|_{2}^{2}-\int_{\R^{d}}G(\mathcal{H}(u,t)(x))dx\right|\\
&\leq \frac{e^{2ts_{1}}}{2}
|\nabla_{s_1}u|_{2}^{2}+\frac{e^{2ts_{2}}}{2}|\nabla_{s_2}u|_{2}^{2}+
e^{dt (\frac{\alpha-2}{2})} \int_{\R^{d}} G(u)dx.
\end{align*}
Thus $$I(\mathcal{H}(u,t)) \rightarrow 0 \ \ \mbox{ as} \ \
t\rightarrow -\infty,$$ showing $(1)$.\\
In order to show $(2)$, note that by \eqref{mg11},
$$|\nabla_{s_1}\mathcal{H}(u,t)|_{2}^{2}+|\nabla_{s_2}\mathcal{H}(u,t)|_{2}^{2} \rightarrow +\infty \ \ \mbox{as} \ \ t \rightarrow +\infty.$$
On the other hand,
$$I(\mathcal{H}(u,t))\leq \frac{e^{2ts_{1}}}{2}
|\nabla_{s_1}u|_{2}^{2}+\frac{e^{2ts_{2}}}{2}|\nabla_{s_2}u|_{2}^{2}-
e^{dt (\frac{\alpha-2}{2})} \int_{\R^{d}} G(u)dx.$$ Taking into
account the fact that $\alpha
>2+\frac{4s_2}{d}$, it shows that
$I(\mathcal{H}(u,t))\rightarrow -\infty \ \ \mbox{as} \ \
t \rightarrow +\infty.$
\end{proof}
\begin{Lem}\label{mg2}
Suppose that the assumptions of Lemma \ref{mg1} are fulfilled. Then,
there exists $K(a)>0$ small enough such that
$$\displaystyle 0<\sup_{u\in A}I(u)<\inf_{u\in B}I(u)$$
with
$$A=\{u\in S(a): |\nabla_{s_1}u|_{2}^{2}+|\nabla_{s_2}u|_{2}^{2}\leq K(a)\} \ \ \mbox{and} \ \ B=\{u\in S(a): |\nabla_{s_1}u|_{2}^{2}+|\nabla_{s_2}u|_{2}^{2}=2 K(a).\}$$
\end{Lem}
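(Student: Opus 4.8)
The plan is to establish a uniform energy gap between a small-norm region $A$ and a fixed-norm sphere $B$ (in terms of the Gagliardo seminorms) on the constraint sphere $S(a)$. The key structural fact I would exploit is the fractional Gagliardo--Nirenberg inequality \eqref{gagliardo}, which lets me dominate $\int_{\R^d}G(u)\,dx$ by powers of the seminorms that are strictly supercritical. Concretely, by Remark \ref{rem}(2) we have $\int_{\R^d}G(u)\,dx\le C_2\int_{\R^d}(|u|^\alpha+|u|^\beta)\,dx$, and applying \eqref{gagliardo} with $p+2=\alpha$ and $p+2=\beta$ (using the $s_2$-seminorm, so that the constraint $\beta<2^\ast_{s_1}\le 2^\ast_{s_2}$ guarantees admissibility) bounds this by a finite sum of terms of the form $|\nabla_{s_2}u|_2^{\theta}a^{\kappa}$, where each exponent $\theta\in\{\tfrac{d(\alpha-2)}{2s_2},\tfrac{d(\beta-2)}{2s_2}\}$ is strictly larger than $2$ because $\alpha>2+\tfrac{4s_2}{d}$ (see $(G_2)$ and the inequality \eqref{3.2.2} already recorded in Lemma \ref{con}).

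First I would introduce the abbreviation $D(u):=|\nabla_{s_1}u|_2^2+|\nabla_{s_2}u|_2^2$ and write, for $u\in S(a)$,
\begin{equation*}
I(u)=\frac{1}{2}D(u)-\int_{\R^d}G(u)\,dx\ge \frac{1}{2}D(u)-C\bigl(D(u)^{\theta_1/2}+D(u)^{\theta_2/2}\bigr),
\end{equation*}
where $\theta_1,\theta_2>2$ are the two exponents above (absorbing the fixed powers of $a$ into the constant $C=C(a)$, and using $|\nabla_{s_2}u|_2^2\le D(u)$). Since $\theta_i/2>1$, the subtracted terms are of higher order than the leading quadratic term as $D(u)\to 0$. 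From this I would immediately read off two facts: for $D(u)$ small the quadratic term dominates, so $I(u)>0$ on $A\setminus\{0\}$; and the map $\rho\mapsto \tfrac12\rho-C(\rho^{\theta_1/2}+\rho^{\theta_2/2})$ is increasing on a neighbourhood of $0$. I would then pick $K(a)>0$ small enough that this scalar function is strictly increasing on $[0,2K(a)]$ and positive on $(0,2K(a)]$.

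The ordering $\sup_A I<\inf_B I$ then follows by comparing the lower bound on $B$ (where $D(u)=2K(a)$) against an upper bound on $A$ (where $D(u)\le K(a)$). The upper bound on $A$ comes from the elementary estimate $I(u)\le \tfrac12 D(u)\le \tfrac12 K(a)$ together with positivity, while on $B$ I use the lower bound displayed above evaluated at $\rho=2K(a)$. Because the scalar comparison function is strictly increasing and vanishes to first order, choosing $K(a)$ small makes the gap strictly positive; the key point is that the value at $\rho=2K(a)$ exceeds $\tfrac12 K(a)$ precisely when $K(a)$ is small, since the leading behaviour is $\rho\mapsto\rho/2$ and $\tfrac12\cdot 2K(a)=K(a)>\tfrac12 K(a)$, with the correction terms negligible.

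The main obstacle I anticipate is purely quantitative bookkeeping rather than conceptual: I must verify that the higher-order Gagliardo--Nirenberg corrections on $B$ do not overwhelm the quadratic gain, which forces the smallness of $K(a)$ and makes $K(a)$ genuinely depend on $a$ (through the constant $C(a)$ arising from the $a^\kappa$ factors). A secondary subtlety is ensuring the lower estimate on $B$ is controlled from below rather than merely from above; here I would need $|\nabla_{s_2}u|_2^2$ to be comparable to $D(u)$ from below as well, which is not automatic, so instead I would phrase the lower bound on $I$ on all of $S(a)$ directly in terms of $D(u)$ and rely only on the monotonicity of the scalar envelope function. Once $K(a)$ is fixed with these inequalities in force, both the positivity $0<\sup_A I$ and the strict separation $\sup_A I<\inf_B I$ drop out, completing the proof.
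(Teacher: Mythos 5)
Your proposal is correct and follows essentially the same route as the paper: both bound $\int_{\R^d}G(u)\,dx$ via Remark \ref{rem} and the fractional Gagliardo--Nirenberg inequality \eqref{gagliardo} by powers $(|\nabla_{s_1}u|_2^2+|\nabla_{s_2}u|_2^2)^{\theta/2}$ with $\theta>2$ (using $\alpha>2+\tfrac{4s_2}{d}$), and then exploit $G\ge 0$ to compare the quadratic leading terms on $A$ and $B$, choosing $K(a)$ small so the supercritical corrections are negligible. The only difference is presentational: the paper estimates the difference $I(v)-I(u)\ge \tfrac{K(a)}{2}-C\bigl[(K(a))^{\frac{d(\alpha-2)}{4s_2}}+(K(a))^{\frac{d(\beta-2)}{4s_2}}\bigr]$ directly, whereas you bound $\sup_A I$ and $\inf_B I$ separately, which amounts to the same computation.
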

\begin{proof}
Recall  from Remark \ref{rem} that
$$\int_{\R^d}G(u)dx\leq C\left[ \int_{\R^d}|u|^{\alpha}dx+\int_{\R^d}|u|^{\beta}dx\right],$$
where $C$ is a positive constant. Then, by the Gagliardo-Nirenberg
inequality \eqref{gagliardo}, we have $$\int_{\R^{d}}G(u)dx\leq
C\left[ \left(
|\nabla_{s_{1}}u|^2_{2}+|\nabla_{s_{2}}u|^2_{2}\right)^{d\frac{(\alpha-2)}{4s_{2}}}+\left(
|\nabla_{s_{1}}u|^2_{2}+|\nabla_{s_{2}}u|^2_{2}\right)^{d\frac{(\beta-2)}{4s_{2}}}\right].$$
Since $G(u)\geq 0$ for any $u\in H^{s_{1},s_{2}}(\R^{d})$, we can
deduce that
\begin{align*}
I(v)-I(u)&=
\frac{1}{2}\left[|\nabla_{s_{1}}v|_{2}^{2}+|\nabla_{s_{2}}v|_{2}^{2}
\right]-\frac{1}{2}\left[|\nabla_{s_{1}}u|_{2}^{2}+|\nabla_{s_{2}}u|_{2}^{2}
\right]- \int_{\R^d}G(v)dx+\int_{\R^d}G(u)dx\\
&\geq
\frac{1}{2}\left[|\nabla_{s_{1}}v|_{2}^{2}+|\nabla_{s_{2}}v|_{2}^{2}
\right]-\frac{1}{2}\left[|\nabla_{s_{1}}u|_{2}^{2}+|\nabla_{s_{2}}u|_{2}^{2}
\right]- \int_{\R^d}G(v)dx, \ \ \forall u,v\in
H^{s_{1},s_{2}}(\R^{d}).
\end{align*}
Therefore, if we fix $u\in A$, $v\in B$, then
$
|\nabla_{s_{1}}u|_{2}^{2}+|\nabla_{s_{2}}u|_{2}^{2}\leq K(a)$ and
$|\nabla_{s_{1}}v|_{2}^{2}+|\nabla_{s_{2}}v|_{2}^{2}=2K(a)$,
$$I(v)-I(u)\geq \frac{K(a)}{2}-C\left[(K(a))^{d\frac{\alpha-2}{4s_{2}}}+(K(a))^{d\frac{\beta-2}{4s_{2}}}\right],$$
where $C=C(\alpha,\beta, d,s_2,a)>0$. Due to $\beta>\alpha> 2+\frac{4s_2}{d}$, for $K(a)$ sufficiently small, there exists $\delta>0$, such that
$$I(v)-I(u)\geq \delta>0.$$
Moreover, exploiting the above
inequalities, we deduce that
$$\sup_{u\in A}I(u) >0, $$
where $K(a)$ small enough.
\end{proof}

As a consequence of the above two lemmas we get:
\begin{Lem}
Let $(G_1)-(G_2)$ be satisfied. Then there exist $u_1,u_2\in S(a)$
such that\\
$(1)$ $|\nabla_{s_{1}}u_1|_{2}^{2}+|\nabla_{s_{2}}u_1|_{2}^{2}\leq
K(a)$ ($K(a)$ is defined in Lemma \ref{mg2}).\\
$(2)$ $|\nabla_{s_{1}}u_2|_{2}^{2}+|\nabla_{s_{2}}u_2|_{2}^{2}>2 K(a)$.\\
$(3)$ $I(u_1)>0>I(u_2)$.\\
Moreover setting
$$\gamma(a)=\displaystyle \inf_{h\in \Gamma(a)}\max_{t\in[0,1]}I(h(t)),$$
with
$$\Gamma(a)=\{h\in C([0,1], S(a)), \ \ h(0)=u_1, \ \ h(1)=u_2\}.$$
Then
$$\gamma(a)>\max\{I(u_1),I(u_2)\}>0.$$
\end{Lem}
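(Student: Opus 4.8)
The plan is to produce the two endpoints by dilating a fixed profile with the map $\mathcal{H}$, and then to extract the strict mountain pass inequality from the level separation of Lemma \ref{mg2} together with an intermediate value argument along admissible paths.

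First I would fix an arbitrary $w\in S(a)$ and follow the curve $t\mapsto \mathcal{H}(w,t)$, which remains in $S(a)$ since $|\mathcal{H}(w,t)|_2=|w|_2$. By Lemma \ref{mg1}$(1)$, as $t\rightarrow -\infty$ both $|\nabla_{s_1}\mathcal{H}(w,t)|_2^2+|\nabla_{s_2}\mathcal{H}(w,t)|_2^2\rightarrow 0$ and $I(\mathcal{H}(w,t))\rightarrow 0$; moreover, since the kinetic part scales like $e^{2ts_1}$ and $e^{2ts_2}$ while the nonlinear part decays at least like $e^{dt(\alpha-2)/2}$ with $d(\alpha-2)/2>2s_2>2s_1$ by $(G_2)$, the kinetic part dominates for $t$ very negative, so $I(\mathcal{H}(w,t))>0$ there. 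Hence I can choose $t_-$ so negative that $u_1:=\mathcal{H}(w,t_-)$ satisfies $|\nabla_{s_1}u_1|_2^2+|\nabla_{s_2}u_1|_2^2\leq K(a)$, i.e. $u_1\in A$, together with $I(u_1)>0$, which gives $(1)$. Dually, Lemma \ref{mg1}$(2)$ furnishes, for $t$ large positive, gradient norm tending to $+\infty$ and $I(\mathcal{H}(w,t))\rightarrow -\infty$, so there is $t_+$ with $u_2:=\mathcal{H}(w,t_+)$ obeying $|\nabla_{s_1}u_2|_2^2+|\nabla_{s_2}u_2|_2^2>2K(a)$ and $I(u_2)<0$, yielding $(2)$ and, with the previous step, $(3)$, namely $I(u_1)>0>I(u_2)$.

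Next I would establish $\gamma(a)>\max\{I(u_1),I(u_2)\}$. As $I(u_1)>0>I(u_2)$, this maximum equals $I(u_1)$. The decisive observation is topological: for any $h\in\Gamma(a)$ the real-valued function $t\mapsto |\nabla_{s_1}h(t)|_2^2+|\nabla_{s_2}h(t)|_2^2$ is continuous on $[0,1]$, because $h$ is continuous into $H^{s_1,s_2}(\R^d)$ and the seminorms are continuous, and it takes a value $\leq K(a)$ at $t=0$ and a value $>2K(a)$ at $t=1$. By the intermediate value theorem there is $t^*\in(0,1)$ with $|\nabla_{s_1}h(t^*)|_2^2+|\nabla_{s_2}h(t^*)|_2^2=2K(a)$, that is $h(t^*)\in B$. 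Therefore $\max_{t\in[0,1]}I(h(t))\geq I(h(t^*))\geq \inf_{u\in B}I(u)$, and taking the infimum over $h\in\Gamma(a)$ gives $\gamma(a)\geq \inf_{u\in B}I(u)$. Invoking Lemma \ref{mg2} then yields $\inf_{u\in B}I(u)>\sup_{u\in A}I(u)\geq I(u_1)$, the last inequality because $u_1\in A$, so $\gamma(a)>I(u_1)=\max\{I(u_1),I(u_2)\}>0$, as claimed.

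The only genuinely delicate point is the crossing of $B$: it rests on the gap $K(a)<2K(a)$ separating the gradient-norm levels of $A$ and $B$, and on the admissibility of paths in the $H^{s_1,s_2}$-topology, which makes the gradient-norm functional vary continuously along them. Everything else is a direct transcription of Lemmas \ref{mg1} and \ref{mg2}; the one computation I would carry out with care is the scaling estimate guaranteeing $I(u_1)>0$ rather than merely $I(u_1)\rightarrow 0$, which uses the exponent inequality $d(\alpha-2)/2>2s_2$ coming from $(G_2)$.
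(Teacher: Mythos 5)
Your proposal is correct and follows exactly the route the paper intends: the paper's own proof is the one-line remark that the result ``follows directly from Lemmas \ref{mg1} and \ref{mg2}'', and your argument—endpoints produced by dilating a fixed profile via $\mathcal{H}$ (with the exponent gap $d(\alpha-2)/2>2s_2$ guaranteeing $I(u_1)>0$), plus the intermediate value theorem forcing every admissible path to cross the set $B$—is precisely the standard filling-in of that remark. Your careful treatment of the crossing of $B$ and of the strict positivity of $I(u_1)$ supplies details the paper leaves implicit, but there is no difference in method.
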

\begin{proof}
The proof follows directly from Lemmas \ref{mg1} and \ref{mg2}.
\end{proof}

To get our result  we shall draw an additional variational
characterization of the mountain pass level $\gamma(a)$.
\begin{Lem}\label{caractgam}
If $u\in H^{s_1,s_2}(\mathbb{R}^{d})$ is a critical point of $I$,
then
\[P_{\infty}(u)=0.\]
\end{Lem}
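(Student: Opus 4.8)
The plan is to read off $P_\infty(u)=0$ directly from the scaling (fiber) structure already set up in Section~2, rather than computing a fractional Pohozaev identity by hand. The crucial observation is the formula derived just before Lemma~\ref{max}, namely $\Psi_{\infty,u}'(t)=\frac{1}{t}P_\infty(t*u)$, where $\Psi_{\infty,u}(t)=I(t*u)$; evaluating it at $t=1$ gives at once $\Psi_{\infty,u}'(1)=P_\infty(u)$. Thus it suffices to show that the derivative of $t\mapsto I(t*u)$ vanishes at $t=1$ whenever $u$ is a critical point of $I$ on $S(a)$.

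First I would exploit that the fiber map preserves the $L^2$-norm: since $|t*u|_2=|u|_2$ for every $t>0$, the curve $t\mapsto t*u$ lies entirely on the sphere $S(a)$ and passes through $u$ at $t=1$. Consequently its velocity $w:=\frac{d}{dt}(t*u)|_{t=1}$ is tangent to $S(a)$ at $u$; equivalently $\langle u,w\rangle_{L^2}=\tfrac{1}{2}\frac{d}{dt}|t*u|_2^2|_{t=1}=0$. Being a constrained critical point, $u$ satisfies the Euler--Lagrange identity $I'(u)=\lambda u$ in $(H^{s_1,s_2}(\R^d))^*$ for some Lagrange multiplier $\lambda\in\R$. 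The chain rule then yields $\Psi_{\infty,u}'(1)=\langle I'(u),w\rangle=\lambda\langle u,w\rangle_{L^2}=0$, and combining this with $\Psi_{\infty,u}'(1)=P_\infty(u)$ gives the claim. (If instead the statement is read as an unconstrained critical point, then $I'(u)=0$ and the same computation applies verbatim.)

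The step requiring care---and which I expect to be the main obstacle---is the chain rule $\Psi_{\infty,u}'(1)=\langle I'(u),w\rangle$. Indeed $w=\tfrac{d}{2}u+x\cdot\nabla u$ is the infinitesimal generator of the dilations $t*u$, and for a generic $u\in H^{s_1,s_2}(\R^d)$ it is not clear that $w\in H^{s_1,s_2}(\R^d)$, so both the differentiability of $t\mapsto t*u$ in the $H^{s_1,s_2}$-norm and the pairing $\langle I'(u),w\rangle$ must be justified. I would resolve this in one of two ways. Either invoke the additional regularity of critical points: a critical point of $I$ solves the mixed fractional equation, and an elliptic bootstrap renders $w$ an admissible test function, so the pairing is well defined and the chain rule holds. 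Or, more robustly, avoid $w$ altogether by noting that $\Psi_{\infty,u}(t)=\tfrac{t^{2s_1}}{2}|\nabla_{s_1}u|_2^2+\tfrac{t^{2s_2}}{2}|\nabla_{s_2}u|_2^2-t^{-d}\int_{\R^d}G(t^{d/2}u)\,dx$ is smooth in $t$ with derivative at $t=1$ equal unconditionally to $P_\infty(u)$, and then establishing that this derivative vanishes directly from the weak form of $I'(u)=\lambda u$ tested against the dilation family (a regularised Pohozaev computation), where the $\lambda$-term drops out precisely because the fiber preserves the $L^2$-norm. Either route delivers $P_\infty(u)=0$.
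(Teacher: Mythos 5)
Your proposal really contains two different arguments, and it is worth separating them, because only one of them closes and that one coincides with the paper's proof. The paper argues exactly as in your ``more robust'' second option: a critical point (which the paper, like you, implicitly treats as a constrained critical point, so that it weakly solves $(-\Delta)^{s_1}u+(-\Delta)^{s_2}u+\lambda u=g(u)$) is tested against $u$, giving the Nehari identity $|\nabla_{s_1}u|_2^2+|\nabla_{s_2}u|_2^2+\lambda|u|_2^2=\int_{\R^d}g(u)u\,dx$, and against $x\cdot\nabla u$, giving the Pohozaev identity $\frac{2s_1-d}{2}|\nabla_{s_1}u|_2^2+\frac{2s_2-d}{2}|\nabla_{s_2}u|_2^2-\frac{d\lambda}{2}|u|_2^2+d\int_{\R^d}G(u)\,dx=0$. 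Adding $\frac{d}{2}$ times the first to the second cancels the $\lambda$-terms and is literally the statement $P_\infty(u)=0$. Since testing against $u$ and against $x\cdot\nabla u$ with these weights is the same as testing against the dilation generator $w=\frac{d}{2}u+x\cdot\nabla u$, and the cancellation of $\lambda$ is exactly your tangency relation $\langle u,w\rangle_{L^2}=0$, your ``regularised Pohozaev computation tested against the dilation family'' is not an alternative to the paper's argument: it \emph{is} the paper's argument, phrased in the fiber-map language.

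What does not stand on its own is the primary route, the claim that one can read the conclusion off the scaling structure ``rather than computing a fractional Pohozaev identity by hand.'' The identity $\Psi_{\infty,u}'(1)=P_\infty(u)$ is indeed unconditional, but the chain-rule step $\Psi_{\infty,u}'(1)=\langle I'(u),w\rangle$ requires the curve $t\mapsto t*u$ to be differentiable \emph{in the $H^{s_1,s_2}$-norm} at $t=1$, and this fails for general $u$: since $u$ has only fractional smoothness, $w=\frac{d}{2}u+x\cdot\nabla u$ need not even lie in $L^2(\R^d)$. Your patch via regularity is weaker than what is needed, because making $w$ an admissible test function is strictly less than norm-differentiability of the curve; and replacing $w$ by difference quotients does not repair it either, since Fr\'echet differentiability of $I$ only gives $I(t*u)-I(u)=\langle I'(u),\,t*u-u\rangle+o\left(\|t*u-u\|\right)$, while $\|t*u-u\|/|t-1|$ may blow up as $t\to 1$, so the remainder need not be $o(|t-1|)$. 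The regularity-and-decay theory that would legitimize the chain rule for the mixed fractional operator is substantial and is not developed in the paper. So the honest summary is: the scaling picture correctly explains \emph{why} the multiplier $\lambda$ must disappear, but it does not bypass the Nehari--Pohozaev computation, and the branch of your proposal that actually completes the proof is precisely that computation.
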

\begin{proof}
Let $u$ be a critical point of $I$. Thus, inserting  $u$ and $x\cdot
\nabla u$ respectively in the definition of $I^{'}$, and then
integrating by parts, we get
 \begin{equation}\label{p11}
|\nabla_{s_{1}}u|_{2}^{2}+|\nabla_{s_{2}}u|_{2}^{2}+\lambda
|u|_{2}^{2}-\int_{\mathbb{R}^{d}}g(u)udx=0
 \end{equation}
 and
 \begin{equation}\label{p12}
\frac{2s_1-d}{2}|\nabla_{s_{1}}u|_{2}^{2}+\frac{2s_2-d}{2}|\nabla_{s_{2}}u|_{2}^{2}-\frac{d\lambda}{2}|u|_{2}^{2}+d\int_{\mathbb{R}^{d}}G(u)dx=0.
 \end{equation}
 Exploiting \eqref{p11} and \eqref{p12}, we infer that
 $$s_1|\nabla_{s_{1}}u|_{2}^{2}+s_2|\nabla_{s_{2}}u|_{2}^{2}-d\int_{\mathbb{R}^{d}}
[\frac{g(u)u}{2}-G(u)]dx=0,$$
 which proves $P_{\infty}(u)=0,$ thus the proof of the lemma.
\end{proof}
\begin{Lem}\label{uniqueness}
Assume that $(G_1)-(G_3)$ hold and let $u\in S(a)$ be arbitrary.
Then, there exists a unique $t_u>0$ such that
$\mathcal{H}(u,t_u)\in \mathcal{P}_{\infty,a}$. Moreover,
$$I(\mathcal{H}(u,t_u))=\displaystyle \max_{t>0} I (\mathcal{H}(u,t)).$$
\end{Lem}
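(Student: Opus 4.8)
The plan is to recognize that the map $\mathcal{H}(\cdot,t)$ is nothing but the fiber map evaluated at the multiplicative scale $e^{t}$: setting $s=e^{t}>0$, one checks directly from the definitions that $\mathcal{H}(u,t)(x)=e^{\frac{dt}{2}}u(e^{t}x)=s^{\frac d2}u(sx)=(s*u)(x)$, so that $\mathcal{H}(u,t)=e^{t}*u$ and $I(\mathcal{H}(u,t))=\Psi_{\infty,u}(e^{t})$. Since $t\mapsto e^{t}$ is an increasing bijection from $\R$ onto $\R^{+}$, the present lemma is exactly the reparametrization of Lemma \ref{max} under this substitution. Thus I would first record this identity, and then transfer the conclusions of Lemma \ref{max} (existence of a unique scale landing in $\mathcal{P}_{\infty,a}$, together with maximality of $I$ there) to the variable $t$.

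Concretely, by Lemma \ref{max} there is a unique $s_{u}>0$ with $s_{u}*u\in\mathcal{P}_{\infty,a}$ and $I(s_{u}*u)=\max_{s>0}I(s*u)$. Setting $t_{u}:=\ln s_{u}$ gives $\mathcal{H}(u,t_{u})=s_{u}*u\in\mathcal{P}_{\infty,a}$, while $|\mathcal{H}(u,t_{u})|_{2}=|u|_{2}$ keeps it on $S_a$. For uniqueness, if $\mathcal{H}(u,t)\in\mathcal{P}_{\infty,a}$ for some $t$, then $e^{t}*u\in\mathcal{P}_{\infty,a}$, so $e^{t}$ is one of the admissible scales of Lemma \ref{max} and must equal $s_{u}$; injectivity of the exponential then forces $t=t_{u}$. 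Finally, $\max_{t\in\R}I(\mathcal{H}(u,t))=\max_{s>0}\Psi_{\infty,u}(s)=I(s_{u}*u)=I(\mathcal{H}(u,t_{u}))$, which is the asserted maximality.

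If a self-contained argument is preferred (avoiding the reduction), I would instead work directly with $\phi(t):=I(\mathcal{H}(u,t))$. A change of variables $y=e^{t}x$ in $\int_{\R^d}G(\mathcal{H}(u,t))\,dx$ together with the identity \eqref{mg11}, after differentiation and using $\widetilde{G}(s)=\frac{g(s)s}{2}-G(s)$, yields the key relation $\phi'(t)=P_{\infty}(\mathcal{H}(u,t))$, which is the additive analogue of $\Psi'_{\infty,u}(t)=\frac{1}{t}P_{\infty}(t*u)$. Lemma \ref{mg1} gives $\phi(t)\to0$ as $t\to-\infty$ and $\phi(t)\to-\infty$ as $t\to+\infty$, while Lemma \ref{mg2} shows $\phi$ takes strictly positive values; hence $\phi$ attains a positive global maximum at some $t_{u}$ with $\phi'(t_{u})=P_{\infty}(\mathcal{H}(u,t_{u}))=0$, i.e. $\mathcal{H}(u,t_{u})\in\mathcal{P}_{\infty,a}$.

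The only genuinely nontrivial point is uniqueness, and this is where I expect the main work to lie if one proceeds directly. The cleanest route is to show that every critical point of $\phi$ is a strict local maximum, i.e. $\phi''(t_{u})<0$: computing $\phi''$ and substituting the critical relation $\phi'(t_{u})=0$, one uses hypothesis $(G_3)$ in the form $\widetilde{G}'(s)s\ge\alpha\widetilde{G}(s)$ together with $\alpha>2+\frac{4s_2}{d}$ and $s_1<s_2$, exactly as in the proof of Lemma \ref{max}, to conclude $\phi''(t_{u})<0$. Since a differentiable function on $\R$ cannot have two strict local maxima without an intervening local minimum, this forces the critical point to be unique, which simultaneously identifies $t_{u}$ as the unique parameter with $\mathcal{H}(u,t_{u})\in\mathcal{P}_{\infty,a}$ and as the global maximizer. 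I would present the reduction to Lemma \ref{max} as the main proof, since there uniqueness is immediate from the bijectivity of $t\mapsto e^{t}$, and retain the direct computation only as a remark.
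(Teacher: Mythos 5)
Your proposal is correct, and your primary route is genuinely different from the paper's. The paper proves the lemma from scratch: it sets $f_u(t)=I(\mathcal{H}(u,t))$, computes $f_u'(t)=P_\infty(\mathcal{H}(u,t))$, invokes Lemmas \ref{mg1} and \ref{mg2} to produce a critical point $t_0\in\R$, and then shows via a second-derivative estimate using $(G_3)$ and $\alpha>2+\frac{4s_2}{d}$ that $f_u''(t_0)<0$ at every critical point, whence uniqueness --- i.e., exactly your ``self-contained'' fallback argument, down to the same use of the critical-point relation to absorb the gradient terms. Your main argument --- observing that $\mathcal{H}(u,t)=e^{t}*u$, hence $I(\mathcal{H}(u,t))=\Psi_{\infty,u}(e^{t})$, so the lemma is Lemma \ref{max} reparametrized by the bijection $t\mapsto e^{t}$ of $\R$ onto $(0,\infty)$ --- is cleaner and avoids duplicating a computation the paper effectively carries out twice (once multiplicatively in Lemma \ref{max}, once exponentially here); the hypotheses of the two lemmas are identical, so nothing is lost in the reduction. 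One caveat you should make explicit: as printed, the statement asks for a unique $t_u>0$ and a maximum over $t>0$, whereas both your reduction ($t_u=\ln s_u$, which can be negative) and the paper's own proof (which finds $t_0\in\R$) naturally produce $t_u\in\R$ and a maximum over $t\in\R$. This is an inconsistency in the paper's statement rather than a gap in your argument, but since you silently read the lemma with $t_u\in\R$ and $\max_{t\in\R}$, you should say so.
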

\begin{proof}
Fix $u\in S(a)$ and consider the function $f_u: \R \rightarrow \R$
defined by
$$f_u(t)=I(\mathcal{H}(u,t)).$$
Therefore
\begin{equation}\label{un1}
f_{u}'(t)=s_1|\nabla_{s_{1}} \mathcal{H}(u,t|_{2}^{2}+
s_2|\nabla_{s_{2}}
\mathcal{H}(u,t)|_{2}^{2}-d\int_{\R^{d}}\widetilde{G}(\mathcal{H}(u,t)(x))dx.
\end{equation}
In light of Lemmas \ref{mg1} and \ref{mg2}, there is $t_0\in \R$
such that $f_{u}^{'}(t_0)=0$. Thus, by \eqref{un1} we deduce  that
$\mathcal{H}(u,t_0)\in \mathcal{P}_{\infty,a}.$
Now, by direct computation and using \eqref{un1}, we get
\begin{align}\label{un2}
f''_{u}(t_0)&=2s_{1}^{2}|\nabla_{s_{1}}
\mathcal{H}(u,t_0)|_{2}^{2}+2s_{2}^{2}|\nabla_{s_{2}}
\mathcal{H}(u,t_0)|_{2}^{2}+ d^{2} \int_{\R^{d}}\widetilde{G}(\mathcal{H}(u,t_0)(x))dx\nonumber\\
&-\frac{d^{2}}{2}\int_{\R^{d}}\widetilde{G}^{'}(\mathcal{H}(u,t_0)(x))\mathcal{H}(u,t_0)(x)dx\nonumber\\
&\leq 2s_{2} \left( s_{1}|\nabla_{s_{1}}
\mathcal{H}(u,t_0)|_{2}^{2}+s_{2}|\nabla_{s_{2}}
\mathcal{H}(u,t_0)|_{2}^{2}\right)+d^{2}\int_{\R^{d}}\widetilde{G}(\mathcal{H}(u,t_0)(x))dx \nonumber\\
&-\frac{d^{2}}{2}\int_{\R^{d}}\widetilde{G}^{'}(\mathcal{H}(u,t_0)(x))\mathcal{H}(u,t_0)(x)dx\nonumber\\
&=(2s_2d+d^2)\int_{\R^{d}}\widetilde{G}(\mathcal{H}(u,t_0)(x))dx-\frac{d^2}{2}\int_{\R^{d}}\widetilde{G}^{'}(\mathcal{H}(u,t_0)(x))\mathcal{H}(u,t_0)(x)dx \nonumber\\
&\leq (2s_2d+d^2-\frac{\alpha d^2}{2})\int_{\R^{d}}\widetilde{G}(\mathcal{H}(u,t_0)(x))dx<0.
\end{align}
Hence, from $(G_3)$, $f''_{u}(t_0)<0$ and this proves the unicity
of $t_0$.
\end{proof}
\begin{Lem}\label{gammainf}
Assume that $(G_1)-(G_3)$ hold. Then
$$\gamma(a)=\displaystyle \inf_{u\in \mathcal{P}_{\infty,a}}I(u).$$
\end{Lem}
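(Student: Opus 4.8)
The plan is to establish the two inequalities $\gamma(a)\ge m_a$ and $\gamma(a)\le m_a$ separately, where $m_a=\inf_{u\in\mathcal P_{\infty,a}}I(u)$. The geometric picture driving everything is that $\mathcal P_{\infty,a}$ is a mountain-pass barrier separating the cap region $A$ (where the admissible paths start, with small gradient and small positive energy) from the negative-energy region (where they end), and the minimal height needed to cross this barrier is exactly $m_a$. Throughout I will use the identity
\[
\frac{d}{dt}I(\mathcal H(u,t))=P_\infty(\mathcal H(u,t)),
\]
which follows from $\mathcal H(u,t)=e^{t}*u$ together with the relation $\frac{d}{dt}I(t*u)=\frac1t P_\infty(t*u)$ recorded just before Lemma \ref{max}.

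For the lower bound $\gamma(a)\ge m_a$ I would show that every path $h\in\Gamma(a)$ meets $\mathcal P_{\infty,a}$. Since $P_\infty$ is continuous on $H^{s_1,s_2}(\R^d)$ under $(G_1)$--$(G_3)$, the map $t\mapsto P_\infty(h(t))$ is continuous, so it suffices to check it changes sign between the endpoints. At $t=0$ the endpoint $u_1$ has small gradient: applying Remark \ref{rem} and the Gagliardo--Nirenberg inequality \eqref{gagliardo} to $d\int_{\R^d}\widetilde G(u_1)\,dx$ bounds it by $C(a)\big(X^{p_1}+X^{p_2}\big)$, where $X=|\nabla_{s_1}u_1|_2^2+|\nabla_{s_2}u_1|_2^2\le K(a)$ and the exponents $p_1=\tfrac{d(\alpha-2)}{4s_2}$, $p_2=\tfrac{d(\beta-2)}{4s_2}$ exceed $1$ precisely because $\alpha,\beta>2+\tfrac{4s_2}{d}$; hence $P_\infty(u_1)\ge X\big(s_1-C(a)(X^{p_1-1}+X^{p_2-1})\big)>0$ once $K(a)$ is small. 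At $t=1$ the endpoint satisfies $I(u_2)<0$; analysing the fiber $\psi(t)=I(\mathcal H(u_2,t))$ through Lemma \ref{uniqueness} (unique maximum at some $t_0$, with $\psi\to0^+$ as $t\to-\infty$, $\psi\to-\infty$ as $t\to+\infty$, and strict monotonicity on each side), the relation $\psi(0)=I(u_2)<0<\psi(t_0)$ forces $t_0<0$, whence $\psi'(0)=P_\infty(u_2)<0$. The intermediate value theorem then yields $t^\ast$ with $h(t^\ast)\in\mathcal P_{\infty,a}$, so $\max_{[0,1]}I(h)\ge I(h(t^\ast))\ge m_a$; taking the infimum over $h$ gives $\gamma(a)\ge m_a$.

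For the upper bound $\gamma(a)\le m_a$ I would exhibit, for each $u\in\mathcal P_{\infty,a}$, an admissible path whose maximal energy equals $I(u)$. The canonical candidate is the scaling path $t\mapsto\mathcal H(u,t)$: by Lemma \ref{uniqueness} its energy is maximised at the unique $t$ with $\mathcal H(u,t)\in\mathcal P_{\infty,a}$, and since $u$ already lies in $\mathcal P_{\infty,a}$ this maximiser is $t=0$, so $\max_{t\in\R}I(\mathcal H(u,t))=I(u)$. By Lemma \ref{mg1} there is $T>0$ with $\mathcal H(u,-T)\in A$ and $I(\mathcal H(u,T))<0$, so reparametrising $[-T,T]$ onto $[0,1]$ produces a path joining a point of $A$ to a point of $\{I<0\}$ with peak $I(u)$. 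It then remains to splice its two ends to the fixed $u_1,u_2$: the initial splice stays in $A$, where $I\le\tfrac12 K(a)<m_a\le I(u)$ provided $K(a)<2m_a$, and the terminal splice stays in the negative-energy region where $I<0<I(u)$; since $S(a)$ and these sublevel regions are path-connected, neither splice raises the maximum above $I(u)$. Hence $\gamma(a)\le I(u)$, and the infimum over $u\in\mathcal P_{\infty,a}$ gives $\gamma(a)\le m_a$.

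I expect the main obstacle to be this endpoint-matching in the upper bound: the set $\Gamma(a)$ fixes the two endpoints $u_1,u_2$, whereas the scaling path naturally terminates at $\mathcal H(u,\mp T)$, so one must connect within sublevel sets without introducing a higher peak. The quantitative inputs that make this clean are the strict separation $\sup_A I<m_a$ (arranged by shrinking $K(a)$, using Lemma \ref{mg2} together with $m_a>0$ from Lemma \ref{Ipositive}) and the uniform lower bound $|\nabla_{s_1}u|_2^2+|\nabla_{s_2}u|_2^2\ge\delta$ on $\mathcal P_{\infty,a}$ coming from the proof of Lemma \ref{Ipositive}, which guarantees that $\mathcal P_{\infty,a}$ genuinely lies beyond the cap $A$; this is what makes the crossing argument of the lower bound and the connectedness argument of the upper bound mutually consistent.
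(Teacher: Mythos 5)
Your overall strategy is sound and is in fact \emph{more} complete than the paper's own proof. The paper argues only by contradiction: given $v\in\mathcal{P}_{\infty,a}$ with $I(v)<\gamma(a)$, it runs the scaling path $t\mapsto\mathcal{H}(v,(2t-1)t_0)$, invokes Lemma \ref{uniqueness} to identify the peak of that path with $I(v)$, and concludes $\gamma(a)\le I(v)$ --- this is exactly your upper bound, and the paper stops there. Your lower bound (continuity of $P_\infty$ along a path, $P_\infty>0$ on the cap $A$ via Remark \ref{rem} and \eqref{gagliardo}, $P_\infty(u_2)<0$ deduced from $I(u_2)<0$ through the fiber analysis, then the intermediate value theorem) is the half that the paper never writes down, and your argument for it is correct: the location $t_0<0$ of the fiber maximum, hence $\psi'(0)=P_\infty(u_2)<0$, follows from the uniqueness of the fiber critical point in Lemma \ref{uniqueness} together with $\psi(t_0)\ge m_a>0$ from Lemma \ref{Ipositive} and $\psi(t)\to 0$ as $t\to-\infty$ from Lemma \ref{mg1}.

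The genuine gap is precisely where you yourself locate the ``main obstacle'': the splicing step of the upper bound. You assert that the cap region and the negative-energy region $\{I<0\}\cap S(a)$ are path-connected, but this is exactly the content that needs proof, and for the negative region the naive construction fails. If $w_0,w_1\in S(a)$ satisfy $I(w_i)<0$, the normalized great-circle path $\theta\mapsto\sqrt a\,(\cos\theta\,w_0+\sin\theta\,w_1)/|\cos\theta\,w_0+\sin\theta\,w_1|_2$ can pass through states of \emph{positive} energy: multiplying a negative-energy state by $c<1$ scales its kinetic part like $c^2$ but its potential part like $c^\beta$ with $\beta>2$ (Remark \ref{rem}), so the sign of the energy can flip along the interpolation. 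A correct repair is possible but requires real work: first flow each endpoint up its fiber, $t\mapsto\mathcal{H}(w_i,t)$ with $t\ge0$, which stays in $\{I<0\}$ because (by Lemma \ref{uniqueness}) the unique fiber maximum of a negative-energy state lies at negative times; then use translation invariance of $I$ to separate supports; and finally check the great circle stays negative, which uses the fact that under the fiber scaling the potential term grows like $e^{dt(\alpha-2)/2}$ and dominates the kinetic terms $e^{2s_1t},e^{2s_2t}$ since $\alpha>2+4s_2/d$, together with a splitting estimate for the nonlocal cross terms. A similar (easier) point arises for the cap splice: a great circle between two elements of $A$ need not stay in $A$, only in a cap of roughly twice the radius, so the bound $\sup I<m_a$ must be arranged on that larger cap by shrinking $K(a)$. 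None of this is in your proposal, so as written the upper bound is incomplete. For what it is worth, the same defect afflicts the paper even more severely --- its comparison path does not even terminate at the prescribed endpoints $u_1,u_2$ of $\Gamma(a)$ --- and the cleanest fix for both proofs is the one in Jeanjean's original framework: define $\Gamma(a)$ by the endpoint \emph{conditions} $h(0)\in A$, $I(h(1))<0$ rather than by fixed endpoints, after which your scaling path (and the paper's) is admissible with no splicing at all.
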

\begin{proof}
We argue by contradiction. We suppose that there is $v\in
\mathcal{P}_{\infty,a}$ such that
\begin{equation}\label{inf1}
I(v)<\gamma(a).
\end{equation}
From Lemma \ref{mg1}, there exists $t_0>0$ such that
$\mathcal{H}(v,-t_0)\in A$ ($A$ defined in lemma \ref{mg2}),
$|\nabla_{s_{1}}\mathcal{H}(v,t_0)|_{2}^{2}+
|\nabla_{s_{2}}\mathcal{H}(v,t_0)|_{2}^{2}\geq 2 K(a)$ and
$I(\mathcal{H}(v,t_0) )<0$. Now define the path $h:[0,1] \rightarrow
S(a)$
$$h(t)=\mathcal{H}(v,(2t-1)t_0).$$
Clearly $h(0)=\mathcal{H}(v,-t_0)$ and $h(1)=\mathcal{H}(v,t_0)$.
Then, by Lemma \ref{uniqueness}, we know that
$$\gamma(a)\leq \displaystyle \max_{t\in [0,1]}I(h(t))=I(v),$$
which reach to a contradiction with \eqref{inf1}.\\
This concludes the proof.
\end{proof}

Finally, in order to deduce the proof of our main theorem, we shall
need the following result.
\begin{Lem}\label{somme}
Assume that $(G_1)-(G_3)$ hold. Let $k\in \N$ and $a_1,a_2,\cdots,a_k$
be such that $a^2=a_{1}^{2}+\cdots+a_{k}^{2}$. Then
$$\gamma(a)< \gamma(a_1)+\cdots+\gamma(a_k).$$
\end{Lem}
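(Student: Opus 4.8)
The plan is to deduce the strict subadditivity directly from the two structural facts already established for the map $a\mapsto\gamma(a)$: its identification with $m_a$ and its strict monotonicity. First I would record, via Lemma \ref{gammainf}, that $\gamma(a)=m_a=\inf_{u\in\mathcal{P}_{\infty,a}}I(u)$ for every mass, so the claimed inequality is exactly the strict subadditivity of $m$. I would also fix the relevant regime at the outset: $k\ge 2$ with every $a_i>0$. This is precisely how the lemma enters the splitting/classification argument, where each $a_i=|u^i|_2>0$; for $k=1$ the relation $a^2=a_1^2$ forces $a=a_1$ and equality, while a vanishing mass would make $\gamma(a_i)$ undefined, since the domain of $m$ is $\R^+$.

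Next I would carry out the comparison. Since $a^2=a_1^2+\cdots+a_k^2$ with $k\ge 2$ and every $a_i>0$, for each fixed index $j$ we have $a^2=a_j^2+\sum_{i\ne j}a_i^2>a_j^2$, hence $a>a_j$. Because $m$ is strictly decreasing (Lemma \ref{decc}) — and this conclusion is insensitive to whether the argument of $\gamma$ is read as the $L^2$-norm or as its square, as $a>a_j$ is equivalent to $a^2>a_j^2$ — we obtain $\gamma(a)<\gamma(a_j)$ for every $j$; in particular $\gamma(a)<\gamma(a_1)$. Finally, invoking Lemma \ref{Ipositive} so that $\gamma(a_j)=m_{a_j}>0$ for each $j$, the remaining summands satisfy $\gamma(a_2)+\cdots+\gamma(a_k)>0$, and therefore
\[\gamma(a)<\gamma(a_1)<\gamma(a_1)+\gamma(a_2)+\cdots+\gamma(a_k),\]
which is the assertion.

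The point worth emphasising is that, once Lemmas \ref{decc}, \ref{Ipositive} and \ref{gammainf} are in hand, there is no genuine obstacle: the strictness comes entirely from the strict monotonicity applied to a single summand, while positivity of $m$ absorbs the remaining ones. The only care needed is bookkeeping — reconciling the two mass conventions used in the text ($S_a=\{\int_{\R^d}|u|^2=a\}$ versus $S(a)=\{|u|_2=a\}$) and making the standing hypotheses $k\ge 2$, $a_i>0$ explicit. Should one prefer an argument that does not lean on monotonicity, the alternative is to mimic the scaling construction of Lemma \ref{decc}: take near-optimizers $u_i$ for $\gamma(a_i)$, dilate and translate them so that they concentrate at mutually far-apart points, and build a competitor of mass $a$ whose energy is close to $\sum_i\gamma(a_i)$ up to a strictly negative interaction/defect term; but with strict monotonicity available this is unnecessary.
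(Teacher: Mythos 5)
Your proof is correct, but it takes a genuinely different route from the paper's. The paper disposes of Lemma \ref{somme} with a one-line reference to Lemma 2.11 of \cite{jeanjean}, i.e.\ the classical dilation argument: for $\theta>1$ one rescales competitors by $x\mapsto \theta^{-1/d}x$, so that each Gagliardo seminorm gains a factor $\theta^{1-2s_i/d}<\theta$ while the mass $|u|_2^2$ and $\int_{\R^d}G(u)\,dx$ gain exactly $\theta$; this yields strict sub-homogeneity of the level as a function of the mass, and strict subadditivity then follows by splitting the total mass and applying sub-homogeneity to each piece. You instead short-circuit the scaling by combining three results already proved in the paper: the identification $\gamma(a)=m_a$ (Lemma \ref{gammainf}), the strict monotonicity of $a\mapsto m_a$ (Lemma \ref{decc}), and the positivity $m_a>0$ (Lemma \ref{Ipositive}), which give at once $\gamma(a)<\gamma(a_1)<\gamma(a_1)+\cdots+\gamma(a_k)$. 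Granting those lemmas, your argument is airtight, shorter, and actually proves more (the level at mass $a$ lies below each single $\gamma(a_j)$); it also makes explicit what the paper leaves implicit, namely that the statement requires $k\ge 2$ and all $a_i>0$ (for $k=1$ one has equality, and $\gamma(0)$ is undefined), which is precisely the regime in which the lemma is invoked in the proof of Theorem \ref{attained}, and that the conclusion is insensitive to the paper's wavering between the conventions $|u|_2^2=a$ and $|u|_2=a$. The trade-off between the two routes: Jeanjean's scaling argument is self-contained and works even in settings where monotonicity of the level is unknown or false, whereas your argument outsources the scaling to Lemma \ref{decc}, whose own proof is a dilation comparison of the same flavor — so the hard step is relocated rather than eliminated; but since the paper has already paid that cost, exploiting it here is the more economical choice.
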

\begin{proof}
The proof is similar to Lemma $2.11$ in \cite{jeanjean}.
\end{proof}

Now, we want to prove that there exists a $(PS)_c$-sequence for $I$
restricted to $S(a)$ at the level $c=\gamma(a)$. To achieve this
goal, we give the following two Lemmas inspired by Lemmas $2.4$ and
$2.5$ from the work of L. Jeanjean \cite{jeanjean}. This type of
proof has now become classical and we won't provide the details in
the sequel.
\begin{Lem}\label{PSF}
Assume that $(G_1)-(G_2)$ hold. Then there exists a sequence
$\{v_n\}\subset S(a)$ such that\\
$(1)$ $I(v_n) \rightarrow \gamma(a)$.\\
$(2)$ $\{\|v_n\|\}$ and $\left\{\int_{\R^{d}}G(v_n(x))dx\right\}$ are bounded in $\R$.\\
$(3)$ $|\langle
I'(v_n),z\rangle_{(H^{s_1,s_2}(\R^{d}))^{\ast}}|\leq
\frac{4}{\sqrt{n}},$ for all $z\in T_{v_{n}}$, where
$$T_{v_{n}}=\left\{z\in H^{s_1,s_2}(\R^{d}), \ \ \int_{\R^{d}}zv_n dx=0\right\}.$$
\end{Lem}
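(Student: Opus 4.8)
The plan is to adapt Jeanjean's augmented-functional device from \cite{jeanjean} to the mixed-fractional setting. I would introduce the auxiliary functional $\tilde{I}:H^{s_1,s_2}(\R^d)\times\R\to\R$ by $\tilde{I}(u,t)=I(\mathcal{H}(u,t))$. Since $\mathcal{H}$ preserves the $L^2$-norm, $\tilde{I}$ is a $C^1$ functional on the product manifold $S(a)\times\R$. The computation behind \eqref{un1} furnishes the crucial identity $\partial_t\tilde{I}(u,t)=P_\infty(\mathcal{H}(u,t))$, so the partial derivative in the scaling variable reproduces exactly the Pohozaev quantity; this is what will supply the almost-Pohozaev information needed for the boundedness claim.

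First I would verify that the augmented mountain-pass level
$$\tilde\gamma(a)=\inf_{\tilde h\in\tilde\Gamma(a)}\max_{t\in[0,1]}\tilde{I}(\tilde h(t)),\qquad \tilde\Gamma(a)=\{\tilde h\in C([0,1],S(a)\times\R):\tilde h(i)=(u_i,0),\ i=0,1\}$$
coincides with $\gamma(a)$. The inequality $\tilde\gamma(a)\le\gamma(a)$ is immediate by embedding each $h\in\Gamma(a)$ as $(h,0)$, since $\mathcal{H}(u,0)=u$; the reverse follows because any augmented path $(w,s)$ projects to the genuine path $t\mapsto\mathcal{H}(w(t),s(t))\in S(a)$ carrying the same values of $I$. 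With the levels identified, I would apply a quantitative minimax principle (Ekeland's variational principle on the metric space of paths, exactly as in Jeanjean's Lemmas $2.4$--$2.5$): choosing near-optimal paths $\tilde h_n$ with $\max_t\tilde{I}(\tilde h_n(t))\le\gamma(a)+\tfrac1n$ and deforming, one extracts $(w_n,t_n)\in S(a)\times\R$ with $\tilde{I}(w_n,t_n)\to\gamma(a)$ and constrained differential of size $O(1/\sqrt n)$, which is what ultimately yields the explicit bound $\tfrac{4}{\sqrt n}$.

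Setting $v_n=\mathcal{H}(w_n,t_n)\in S(a)$ then delivers all three claims. Property $(1)$ is immediate since $I(v_n)=\tilde{I}(w_n,t_n)\to\gamma(a)$. Smallness of the differential in the unconstrained $\R$-direction gives the almost-Pohozaev relation $P_\infty(v_n)\to0$; combining it with the level bound via the coercivity computation of Lemma \ref{coercive}, I would evaluate $I(v_n)-\tfrac{1}{2s_2}P_\infty(v_n)$ and use $(G_2)$ together with Remark \ref{rem} to control $|\nabla_{s_1}v_n|_2$ and $\int_{\R^d}G(v_n)\,dx$ by the bounded left-hand side, then recover $|\nabla_{s_2}v_n|_2$ from $P_\infty(v_n)$; since $|v_n|_2^2=a$, this establishes the boundedness in $(2)$. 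Finally, the smallness of the differential in the $u$-component transfers to $(3)$ because $\mathcal{H}(\cdot,t_n)$ maps $T_{w_n}$ bijectively onto $T_{v_n}$, so via the chain rule $I'(v_n)[z]=\partial_u\tilde{I}(w_n,t_n)[\mathcal{H}(z,-t_n)]$ for every $z\in T_{v_n}$.

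The main obstacle is the quantitative minimax step: producing an approximate critical point of $\tilde{I}|_{S(a)\times\R}$ with an explicit $O(1/\sqrt n)$ bound on the tangential differential while correctly accounting for the spherical constraint, and then transferring this estimate back to $I$ on $S(a)$ through the scaling map $\mathcal{H}(\cdot,t_n)$, which is \emph{not} an isometry of $H^{s_1,s_2}(\R^d)$ and whose fractional seminorms rescale by $e^{-2t_n s_i}$. By contrast, the bookkeeping in $(2)$ is routine, being merely an almost-Pohozaev refinement of the coercivity estimate of Lemma \ref{coercive}.
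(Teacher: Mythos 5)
Your proposal is correct and is essentially the argument the paper intends: the paper gives no proof at all, simply invoking Lemmas 2.4--2.5 of Jeanjean \cite{jeanjean} as ``classical,'' and your augmented functional $\widetilde{I}(u,t)=I(\mathcal{H}(u,t))$ on $S(a)\times\R$, the identity $\partial_t\widetilde{I}=P_\infty(\mathcal{H}(u,t))$, the level identification $\widetilde{\gamma}(a)=\gamma(a)$, the Ekeland extraction, and the Pohozaev-plus-coercivity bookkeeping for property $(2)$ reproduce exactly that argument. The single obstacle you flag --- the non-isometry of $\mathcal{H}(\cdot,t_n)$ when transferring the derivative bound --- is resolved by the closeness estimate that Ekeland's principle provides for free: the approximate critical points $(w_n,t_n)$ lie within distance $1/\sqrt{n}$ of the near-optimal path, which sits in $S(a)\times\{0\}$, so $|t_n|\le 1/\sqrt{n}\to 0$, the rescaling factors $e^{-2t_n s_i}$ stay uniformly close to $1$, and this controlled distortion is precisely why the final constant is $4/\sqrt{n}$ rather than the $2/\sqrt{n}$ coming out of the minimax step.
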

Next, we give a characterization of the sequence $\{v_n\}$ obtained in
Lemma \ref{PSF}.
\begin{Lem}\label{CPSF}
Assume that $(G_1)-(G_2)$ hold and let $\{v_n\}\subset
H^{s_1,s_2}(\R^d)$ be the sequence obtained in Lemma \ref{PSF}.
Then, up to a subsequence, we have:\\
$(1)$ $v_n \rightharpoonup v_a$ in $H^{s_1,s_2}(\R^{d})$.\\
$(2)$ $\int_{\R^d}G(v_n)dx \rightarrow c$ and
$\int_{\R^d}\widetilde{G}(v_n)dx \rightarrow d$ with
$c ,d>0$.\\
$(3)$ $\lambda_{n}=\frac{1}{|v_n|_{2}^{2}}\left(
|\nabla_{s_1}v_n|_{2}^{2}+|\nabla_{s_2}v_n|_{2}^{2}-\int_{\R^d}g(v_n)v_n
dx\right) \rightarrow \lambda_a<0$ in $\R$.\\
$(4)$ $(-\Delta)^{s_1}v_n+(-\Delta)^{s_2}v_n-\lambda_nv_n-g(v_n) \rightarrow 0$
in $(H^{s_1,s_2}(\R^{d}))^{\ast}.$
\end{Lem}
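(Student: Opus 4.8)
The plan is to establish the four assertions in order, extracting nested subsequences as needed, with the single auxiliary fact $P_\infty(v_n)\to 0$ driving assertions $(2)$ and $(3)$. Assertion $(1)$ is immediate: Lemma \ref{PSF}$(2)$ gives $\{\|v_n\|\}$ bounded, so reflexivity of the Hilbert space $H^{s_1,s_2}(\R^d)$ yields a subsequence with $v_n\rightharpoonup v_a$. To obtain $P_\infty(v_n)\to 0$ I would exploit the scaling $t\mapsto\mathcal{H}(v_n,t)$, which stays on $S(a)$ since $\mathcal{H}$ preserves the $L^2$-norm; consequently its velocity at $t=0$ lies in the tangent space $T_{v_n}$. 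Because $\frac{d}{dt}I(\mathcal{H}(v_n,t))\big|_{t=0}=P_\infty(v_n)$ by \eqref{un1} evaluated at $t=0$, the constrained gradient bound Lemma \ref{PSF}$(3)$ forces this derivative to zero, giving $P_\infty(v_n)\to 0$.

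For $(2)$, the sequences $\int_{\R^d}G(v_n)\,dx$ and $\int_{\R^d}\widetilde G(v_n)\,dx$ are bounded (the first by Lemma \ref{PSF}$(2)$, the second by its comparability with the first in Remark \ref{rem}$(2)$), so along a further subsequence they converge to $c,d\ge 0$ which are comparable; in particular $c=0$ if and only if $d=0$. Suppose $c=0$. Then $\int\widetilde G(v_n)\to 0$, whereupon $P_\infty(v_n)\to 0$ yields $s_1|\nabla_{s_1}v_n|_2^2+s_2|\nabla_{s_2}v_n|_2^2\to 0$, so both Gagliardo seminorms vanish. But then $I(v_n)=\frac12\big(|\nabla_{s_1}v_n|_2^2+|\nabla_{s_2}v_n|_2^2\big)-\int G(v_n)\to 0$, contradicting $I(v_n)\to\gamma(a)>0$. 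Hence $c,d>0$.

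For $(3)$, testing $I'(v_n)$ against $v_n$ gives $\langle I'(v_n),v_n\rangle=\lambda_n a$ with $\lambda_n$ as stated, and boundedness of the seminorms together with $\int g(v_n)v_n\,dx\le\beta\int G(v_n)\,dx$ makes $\{\lambda_n\}$ bounded, so $\lambda_n\to\lambda_a$ along a subsequence. To fix the sign I would combine $|\nabla_{s_1}v_n|_2^2+|\nabla_{s_2}v_n|_2^2\le\frac1{s_1}\big(s_1|\nabla_{s_1}v_n|_2^2+s_2|\nabla_{s_2}v_n|_2^2\big)=\frac{d}{s_1}\int\widetilde G(v_n)+\frac1{s_1}P_\infty(v_n)$, the identity $\int g(v_n)v_n\,dx=2\int\widetilde G(v_n)+2\int G(v_n)$, and the bound $\int G(v_n)\ge\frac{2}{\beta-2}\int\widetilde G(v_n)$ from Remark \ref{rem}$(2)$, which together yield
\[
\lambda_n a\le\Big(\frac{d}{s_1}-\frac{2\beta}{\beta-2}\Big)\int_{\R^d}\widetilde G(v_n)\,dx+\frac1{s_1}P_\infty(v_n).
\]
Since $\beta<\frac{2d}{d-2s_1}$ is exactly the condition $\frac{2\beta}{\beta-2}>\frac{d}{s_1}$, letting $n\to\infty$ and using $\int\widetilde G(v_n)\to d>0$ gives $\lambda_a a\le(\frac{d}{s_1}-\frac{2\beta}{\beta-2})d<0$, so $\lambda_a<0$. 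Finally $(4)$ is the dual reformulation of $I'(v_n)-\lambda_n v_n\to 0$: splitting any $z$ as $z=z^\top+\frac{\langle z,v_n\rangle_{L^2}}{a}v_n$ with $z^\top\in T_{v_n}$, the choice of $\lambda_n$ cancels the $v_n$-component, so $\langle I'(v_n)-\lambda_n v_n,z\rangle=\langle I'(v_n),z^\top\rangle$; as $\|z^\top\|\le C\|z\|$ because $\|v_n\|$ is bounded and $|v_n|_2^2=a$, Lemma \ref{PSF}$(3)$ gives $\|I'(v_n)-\lambda_n v_n\|_*\le 4C/\sqrt n\to 0$.

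I expect the main obstacle to be the rigorous justification of $P_\infty(v_n)\to 0$, since in the nonlocal setting the velocity $\frac{d}{2}v_n+x\cdot\nabla v_n$ of the scaling curve need not belong to $H^{s_1,s_2}(\R^d)$, so one cannot naively call it a tangent vector. The cleanest remedy is to produce $\{v_n\}$ from the outset as $v_n=\mathcal{H}(u_n,t_n)$ coming from a mountain-pass sequence for the augmented functional $(u,t)\mapsto I(\mathcal{H}(u,t))$ on $S(a)\times\R$ (the Jeanjean construction underlying Lemma \ref{PSF}), for which the $t$-component of the Palais--Smale condition reads $\partial_t I(\mathcal{H}(u_n,t_n))=P_\infty(v_n)\to 0$ directly, bypassing any differentiability issue for the rough velocity field.
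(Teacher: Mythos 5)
Your proof is correct, and it is essentially the argument the paper intends: the paper gives no proof of this lemma at all, deferring to the ``classical'' Lemmas 2.4--2.5 of Jeanjean \cite{jeanjean}, and your reconstruction (weak limit from boundedness; $c,d>0$ via $P_\infty(v_n)\to 0$ combined with $\gamma(a)>0$; the sign of $\lambda_a$ from the equivalence $\beta<\tfrac{2d}{d-2s_1}\Longleftrightarrow \tfrac{2\beta}{\beta-2}>\tfrac{d}{s_1}$; the tangent-space splitting $z=z^{\top}+\tfrac{\langle z,v_n\rangle_{L^2}}{a}v_n$ for assertion $(4)$) is exactly that argument transposed to the mixed fractional setting. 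Your closing caveat is also the right reading of the situation: $P_\infty(v_n)\to 0$ is not a formal consequence of the statement of Lemma \ref{PSF} (the scaling velocity $\tfrac{d}{2}v_n+x\cdot\nabla v_n$ need not lie in $H^{s_1,s_2}(\R^d)$, so it cannot be fed into Lemma \ref{PSF}$(3)$), but it is built into the augmented-functional construction $(u,t)\mapsto I(\mathcal{H}(u,t))$ on $S(a)\times\R$ by which Lemma \ref{PSF} is proved in Jeanjean's scheme, and it is the one property the paper should arguably have recorded explicitly among the conclusions of Lemma \ref{PSF}.
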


Consequently, we prove that the sequence $\{v_{n}\}$ obtained in Lemma
\ref{PSF} is a $(PS)_c$-sequence of $I_{-\lambda_{a}}$ (see
\eqref{Ilambda} ) where $\lambda_{a}$ is given in Lemma
 \ref{CPSF}.
 \begin{Lem}\label{PS}
Assume that $(G_1)-(G_3)$ hold. Then, the sequence $\{v_n\}$ is a $(PS)_d$-sequence of $I_{-\lambda_{a}}$.
 \end{Lem}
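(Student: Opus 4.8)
The plan is to show that the sequence $\{v_n\}$ from Lemma \ref{PSF}, which satisfies $I(v_n)\to\gamma(a)$, is in fact a Palais-Smale sequence for the \emph{free} functional $I_{-\lambda_a}$ defined in \eqref{Ilambda}, at the level $d=\gamma(a)+\frac{\lambda_a}{2}a$. Recall from Lemma \ref{CPSF} that $\lambda_n\to\lambda_a<0$ and that $(-\Delta)^{s_1}v_n+(-\Delta)^{s_2}v_n-\lambda_n v_n-g(v_n)\to 0$ in $(H^{s_1,s_2}(\R^d))^*$. The key observation is that this last convergence is precisely the statement $I_{-\lambda_n}'(v_n)\to 0$ in the dual space, since by definition
\[
I_{-\lambda_n}'(v_n)=(-\Delta)^{s_1}v_n+(-\Delta)^{s_2}v_n-\lambda_n v_n-g(v_n).
\]
So the first step is to pass from $I_{-\lambda_n}'(v_n)\to 0$ to $I_{-\lambda_a}'(v_n)\to 0$, which requires controlling the error term $(\lambda_a-\lambda_n)v_n$.

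First I would estimate, for any $z\in H^{s_1,s_2}(\R^d)$ with $\|z\|\le 1$,
\[
\langle I_{-\lambda_a}'(v_n)-I_{-\lambda_n}'(v_n),z\rangle=(\lambda_n-\lambda_a)\int_{\R^d}v_n z\,dx,
\]
so that
\[
\|I_{-\lambda_a}'(v_n)-I_{-\lambda_n}'(v_n)\|_{*}\le |\lambda_n-\lambda_a|\,|v_n|_2=|\lambda_n-\lambda_a|\sqrt{a}.
\]
Since $\{v_n\}\subset S(a)$ gives $|v_n|_2^2=a$ and $\lambda_n\to\lambda_a$ by Lemma \ref{CPSF}$(3)$, the right-hand side tends to $0$. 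Combining this with $I_{-\lambda_n}'(v_n)\to 0$ from Lemma \ref{CPSF}$(4)$ yields $I_{-\lambda_a}'(v_n)\to 0$ in $(H^{s_1,s_2}(\R^d))^*$, which is exactly the derivative condition in the definition of a $(PS)_d$-sequence.

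Next I would verify the energy level. Using $I_{-\lambda_a}(v_n)=I(v_n)+\frac{\lambda_a}{2}|v_n|_2^2=I(v_n)+\frac{\lambda_a}{2}a$ and $I(v_n)\to\gamma(a)$ from Lemma \ref{PSF}$(1)$, we get $I_{-\lambda_a}(v_n)\to\gamma(a)+\frac{\lambda_a}{2}a=:d$. One should check this matches the limiting value $d$ identified in Lemma \ref{CPSF}$(2)$; indeed, writing $I_{-\lambda_a}(v_n)=\frac12(|\nabla_{s_1}v_n|_2^2+|\nabla_{s_2}v_n|_2^2)+\frac{\lambda_a}{2}a-\int_{\R^d}G(v_n)\,dx$ and using the almost-critical relation $|\nabla_{s_1}v_n|_2^2+|\nabla_{s_2}v_n|_2^2-\lambda_n a=\int_{\R^d}g(v_n)v_n\,dx+o(1)$ to eliminate the gradient terms, the limit can be expressed through $\lim\int_{\R^d}\widetilde G(v_n)\,dx$, confirming consistency and positivity of $d$.

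The main obstacle I anticipate is purely bookkeeping: ensuring that the constant $d$ appearing in the statement genuinely coincides with the two \emph{a priori} different descriptions (one via $\gamma(a)+\frac{\lambda_a}{2}a$, the other via the limit of $\int\widetilde G(v_n)$ in Lemma \ref{CPSF}$(2)$), rather than any deep analytic difficulty. All the hard analytic work — boundedness of $\{v_n\}$, convergence of the nonlinear terms via the compactness of $g$, and the extraction of $\lambda_a$ — has already been absorbed into Lemmas \ref{PSF} and \ref{CPSF}. Thus the proof reduces to the elementary dual-norm estimate $\|(\lambda_n-\lambda_a)v_n\|_*\le|\lambda_n-\lambda_a|\sqrt a\to 0$ together with the energy identity, and one simply assembles these to conclude that $\{v_n\}$ is a $(PS)_d$-sequence for $I_{-\lambda_a}$.
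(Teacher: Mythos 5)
Your overall strategy is sound and is essentially the paper's own: everything rests on Lemmas \ref{PSF} and \ref{CPSF}, the derivative condition follows from Lemma \ref{CPSF}$(4)$ together with the elementary dual-norm estimate $\|(\lambda_n-\lambda_a)v_n\|_{*}\leq|\lambda_n-\lambda_a|\,|v_n|_2\rightarrow 0$ (which the paper leaves implicit), and the level is identified by eliminating the gradient terms through the definition of $\lambda_n$, which produces $\int_{\R^d}\widetilde G(v_n)\,dx$ exactly as in the paper's proof.

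However, there is a concrete sign error that falsifies your stated level. By definition \eqref{Ilambda}, the subscript of $I_{-\lambda_a}$ plays the role of the parameter $\lambda$, which enters with a \emph{plus} sign, and $\lambda_a<0$ by Lemma \ref{CPSF}$(3)$, so $-\lambda_a>0$ and
$$I_{-\lambda_a}(u)=I(u)-\frac{\lambda_a}{2}|u|_2^2,$$
not $I(u)+\frac{\lambda_a}{2}|u|_2^2$ as you write. Consequently the level is $\gamma(a)-\frac{\lambda_a}{2}a>\gamma(a)>0$, whereas your value $\gamma(a)+\frac{\lambda_a}{2}a$ need not even be positive. Note that your derivative computation uses the correct convention, $I_{-\lambda_n}'(v_n)=(-\Delta)^{s_1}v_n+(-\Delta)^{s_2}v_n-\lambda_n v_n-g(v_n)$, so as written your energy part and your derivative part refer to two different functionals. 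With the sign corrected, your own consistency check closes the argument and reproduces the paper's computation: since $|\nabla_{s_1}v_n|_2^2+|\nabla_{s_2}v_n|_2^2=\lambda_n a+\int_{\R^d}g(v_n)v_n\,dx$ holds exactly (this is the definition of $\lambda_n$), one gets
$$I_{-\lambda_a}(v_n)=\frac{(\lambda_n-\lambda_a)a}{2}+\int_{\R^d}\widetilde G(v_n)\,dx\rightarrow d,$$
where $d>0$ is precisely the constant of Lemma \ref{CPSF}$(2)$, i.e.\ the level named in the statement of the lemma.
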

\begin{proof}
Using the fact that
$$\lambda_{n}=\frac{1}{|v_n|_{2}^{2}}\left(
|\nabla_{s_1}v_n|_{2}^{2}+|\nabla_{s_2}v_n|_{2}^{2}-\int_{\R^d}g(v_n)v_n
dx\right) \rightarrow \lambda_a
$$
and the boundedness of $\{v_n\}$ (see Lemma \ref{CPSF}), we can deduce
that
$$I_{-\lambda_{a}}(v_n)- \int_{\R^{d}}\widetilde{G}(v_n)dx\rightarrow 0.$$
Thus, again by Lemma \ref{CPSF}, $I_{-\lambda_{a}} \rightarrow d>0$.
Moreover, since we have
$$(-\Delta)^{s_1}v_n+(-\Delta)^{s_2}v_n-\lambda_av_n-g(v_n) \rightarrow 0 \ \
\mbox{in} \ \ (H^{s_1,s_2}(\R^{d}))^{\ast},$$ then $\{v_n\}$ is a $(PS)_d$-sequence of $I_{-\lambda_{a}}$.\\
This completes the proof.
\end{proof}

\textbf{Proof of Theorem \ref{attained} concluded.} By Lemma
\ref{PS} and Theorem \ref{Palais}, there exist $v^1,v^2,..., v^k$
such that
$$|v_n|_{2}^{2}\rightarrow \displaystyle \sum_{i=1}^{k} |v^i|_{2}^{2}, \ \
 |\nabla_{s_1}v_n|_{2}^{2}+|\nabla_{s_2}v_n|_{2}^{2} \rightarrow \displaystyle \sum_{i=1}^{k}|\nabla_{s_1}v^i|_{2}^{2}+|\nabla_{s_2}v^i|_{2}^{2}, \ \
 I_{-\lambda_{a}}(v_n) \displaystyle\rightarrow \sum_{i=1}^{k}I_{-\lambda_{a}}(v^i)$$
 which implies that $$a^2=\sum_{i=1}^{k} |v^i|_{2}^{2}, \ \ I(v_n)\rightarrow  \displaystyle \sum_{i=1}^{k}I(v^i) \ \ \mbox{and} \ \ \gamma(a)=\displaystyle \sum_{i=1}^{k}I(v^i).$$
\textbf{ Claim.} We shall prove that $k=1$. Otherwise, we assume
that $k\geq
 2$. Then, from Theorem \ref{Palais} and Lemma \ref{gammainf}, we know that
 $$\gamma(a)=\displaystyle \inf_{u\in \mathcal{P}_{\infty,a}}I(u),$$
 and so
 $$\gamma(a)=\displaystyle \sum_{i=1}^{k}I(v^i) \geq \displaystyle \sum_{i=1}^{k} \gamma(a_i).$$
This reach to a contradiction with Lemma \ref{somme} and so the
claim is proved. \\
Now the function $v^1$ satisfies
$$I(v^1)=\gamma(a), \ \ |v^1|^2_{2}=a,$$
and
$$(-\Delta)^{s_1}v^1+(-\Delta)^{s_2}v^1-g(v^1)=\lambda_a v^1.$$
This ends the proof of Theorem \ref{attained}.
\section{Proof of Theorem \ref{mainnthm}}

Our aim in this section is the proof of Theorem \ref{mainnthm}. This
section is divided into two subsections. In the first one, we give
some technical lemmas which will be used later. In the second
subsection, we conclude the proof of our main result.

\subsection{Technical Lemmas}
In this part, we give some technical lemmas related to problem
\eqref{maineq} with general nonlinear term since we don't need to use Theorem \ref{Palais}.
 First, we can remark that:
 $$P(u)=\Psi_u'(t)|_{t=1}.$$ Then, we
have:
\begin{Pro}\label{PP}
$\Psi_u'(t)=\frac{1}{t}P(t*u)$, for all $u\in H^{s_1,s_2}(\R^d)$.
\end{Pro}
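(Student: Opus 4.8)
The plan is to prove the identity by differentiating $\Psi_u(t)=J(t*u)$ directly in the variable $t$, mirroring the scaling computation already carried out for $\Psi_{\infty,u}'(t)$ at the beginning of Subsection~3.1. Indeed, the only summand of $J$ that is absent from $I$ is the linear potential term, so once that term is handled correctly the remaining parts follow \emph{verbatim} from the nonpotential case.

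First I would record how each summand of $J$ rescales under the fiber map. Performing the change of variables $y=tx$ exactly as in the proof of Lemma~\ref{decc}, one obtains $|\nabla_{s_i}(t*u)|_2^2=t^{2s_i}|\nabla_{s_i}u|_2^2$ for $i=1,2$, together with $\int_{\R^d}G((t*u)(x))\,dx=t^{-d}\int_{\R^d}G(t^{\frac d2}u(x))\,dx$ and, for the potential term, $\int_{\R^d}V(x)(t*u)(x)^2\,dx=\int_{\R^d}V(x/t)u(x)^2\,dx$. Collecting these gives
\[\Psi_u(t)=\tfrac{t^{2s_1}}{2}|\nabla_{s_1}u|_2^2+\tfrac{t^{2s_2}}{2}|\nabla_{s_2}u|_2^2+\tfrac12\int_{\R^d}V(x/t)u(x)^2\,dx-t^{-d}\int_{\R^d}G(t^{\frac d2}u(x))\,dx.\]

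Next I would differentiate in $t$, justifying differentiation under the integral sign by the growth bounds of Remark~\ref{rem} (coming from $(G_1)$–$(G_2)$) for the nonlinear term and by $(V_2)$ for the potential term, which guarantees that $\nabla V$ exists a.e. and controls the relevant integrals. The two seminorm terms produce $s_1t^{2s_1-1}|\nabla_{s_1}u|_2^2+s_2t^{2s_2-1}|\nabla_{s_2}u|_2^2$; the nonlinear term is treated exactly as for $\Psi_{\infty,u}$ and yields $d\,t^{-d-1}\int_{\R^d}G(t^{\frac d2}u)\,dx-\tfrac d2 t^{-\frac d2-1}\int_{\R^d}g(t^{\frac d2}u)\,u\,dx$, which equals $-d\,t^{-d-1}\int_{\R^d}\widetilde G(t^{\frac d2}u)\,dx$ by the definition $\widetilde G(s)=\tfrac{g(s)s}{2}-G(s)$. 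For the potential term the chain rule gives $\tfrac{d}{dt}V(x/t)=-t^{-2}\langle\nabla V(x/t),x\rangle$, so its contribution to $\Psi_u'(t)$ is $-\tfrac{1}{2t^2}\int_{\R^d}\langle\nabla V(x/t),x\rangle u(x)^2\,dx$.

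Finally I would compare with $\tfrac1t P(t*u)$: applying the substitution $y=tx$ to the term $-\tfrac12\int_{\R^d}\langle\nabla V(x),x\rangle(t*u)(x)^2\,dx$ inside $P(t*u)$ recasts it as $-\tfrac1{2t}\int_{\R^d}\langle\nabla V(x/t),x\rangle u(x)^2\,dx$, while the same change of variables turns $-d\int_{\R^d}\widetilde G((t*u)(x))\,dx$ into $-d\,t^{-d}\int_{\R^d}\widetilde G(t^{\frac d2}u)\,dx$. Dividing $P(t*u)$ by $t$ then reproduces, term for term, the expression computed for $\Psi_u'(t)$, which establishes the identity. The only delicate point is the bookkeeping of the scaling exponents and of the chain-rule factor $-t^{-2}\langle\nabla V(x/t),x\rangle$ in the potential term, together with the (routine but necessary) justification of differentiating under the integral; everything else is an immediate transcription of the $V=0$ computation.
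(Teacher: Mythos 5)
Your proposal is correct and follows essentially the same route as the paper: rewrite $\Psi_u(t)=J(t*u)$ explicitly using the scaling identities $|\nabla_{s_i}(t*u)|_2^2=t^{2s_i}|\nabla_{s_i}u|_2^2$ and the changes of variables for the potential and nonlinear terms, differentiate in $t$, and match the result term by term with $\frac{1}{t}P(t*u)$. The only difference is that you make explicit the justification for differentiating under the integral sign and the change of variables inside $P(t*u)$, which the paper leaves implicit.
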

\begin{proof}
First, we claim that $|\nabla_s(t*u)|_2^2=t^{2s}|\nabla_s u|_2^2$,
for all $s\in (0,1)$:
\begin{align*}
|\nabla_s(t^{\frac{d}{2}}u(tx))|_2^2&=\int_{\mathbb{R}^d\times \mathbb{R}^d}\frac{t^d|u(tx)-u(ty)|^2}{|x-y|^{d+2s}}dxdy\\
&=t^{2s}\int_{\mathbb{R}^d\times \mathbb{R}^d}\frac{|u(tx)-u(ty)|^2}{|tx-ty|^{d+2s}}d(tx)d(ty)\\
&=t^{2s}|\nabla_su|_2^2.
\end{align*}
Thus,
\[\Psi_u(t)=\frac{t^{2s_1}}{2}|\nabla_{s_1}u|_2^2+\frac{t^{2s_2}}{2}|\nabla_{s_2}u|_2^2+\frac{1}{2}\int_{\R^d}V(\frac{x}{t})u^2dx
-t^{-d}\int_{\R^d}G(t^{\frac{d}{2}}u(x))dx.\] Therefore,
\begin{align*}
\Psi_u'(t)&=s_1 t^{2s_1-1} |\nabla_{s_1}u|_2^2+s_2 t^{2s_2-1}
|\nabla_{s_2}u|_2^2-\frac{1}{2t^2}\int_{\R^d}\langle \nabla
V(\frac{x}{t}),x\rangle u^2dx -\frac{d}{t^{d+1}}\displaystyle
\int_{\mathbb{R}^{d}}\widetilde{G}(t^{\frac{d}{2}}u(x))dx\\
&=\frac{1}{t}P(t*u).
\end{align*}
This ends the proof.
\end{proof}
\begin{Lem}\label{P}
If $u\in H^{s_1,s_2}(\mathbb{R}^{d})$ is a solution of equation
\eqref{maineq}, then
\[P(u)=0.\]
\end{Lem}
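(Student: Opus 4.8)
The plan is to exploit the identity $P(u)=\Psi_u'(t)|_{t=1}$ recorded just before Proposition \ref{PP}, together with the scaling relation $\Psi_u'(t)=\frac{1}{t}P(t*u)$ established there. Since the fiber map $t*u=t^{d/2}u(tx)$ preserves the $L^2$-norm, the whole dilation curve $t\mapsto t*u$ stays on the constraint sphere $S_a$ whenever $u\in S_a$, and it passes through $u$ at $t=1$. A solution $u$ of \eqref{maineq} is precisely a constrained critical point of $J$ on $S_a$, with $\lambda$ playing the role of the Lagrange multiplier; equivalently, $u$ is a free critical point of the functional $J_\lambda(v)=J(v)+\frac{\lambda}{2}|v|_2^2$. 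I would first record this reformulation, so that the derivative of $J$ along any curve contained in $S_a$ and passing through $u$ must vanish at the base point.

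The core of the argument is then a single differentiation. Setting $\Psi_u(t)=J(t*u)$, Proposition \ref{PP} gives $\Psi_u'(1)=P(u)$ by a direct computation that does not use the equation. On the other hand, the tangent vector $\frac{d}{dt}(t*u)|_{t=1}=\frac{d}{2}u+x\cdot\nabla u$ lies in the tangent space $T_uS_a$; this reflects exactly the $L^2$-invariance of the fiber map, since $\langle u,\frac{d}{2}u+x\cdot\nabla u\rangle_{L^2}=0$. Because $J$ restricted to $S_a$ is critical at $u$, pairing $J'(u)$ with this tangent vector yields $\Psi_u'(1)=0$. Comparing the two evaluations of $\Psi_u'(1)$ forces $P(u)=0$.

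As an equivalent, more hands-on route that mirrors the proof of Lemma \ref{caractgam}, one may instead insert $u$ and $x\cdot\nabla u$ into the relation $J_\lambda'(u)=0$. Testing with $u$ gives the Nehari-type identity $|\nabla_{s_1}u|_2^2+|\nabla_{s_2}u|_2^2+\lambda|u|_2^2+\int_{\R^d}V u^2\,dx=\int_{\R^d}g(u)u\,dx$, while testing with $x\cdot\nabla u$ and integrating by parts, using the fractional Pohozaev identity $\langle(-\Delta)^{s_i}u,x\cdot\nabla u\rangle=\frac{2s_i-d}{2}|\nabla_{s_i}u|_2^2$ and $\int_{\R^d}V u\,(x\cdot\nabla u)\,dx=-\frac{1}{2}\int_{\R^d}(dV+\langle\nabla V,x\rangle)u^2\,dx$, produces the corresponding Pohozaev identity. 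Adding $\frac{d}{2}$ times the first identity to the second makes the $\lambda$-terms and the pure-$V$ terms cancel, leaving precisely $s_1|\nabla_{s_1}u|_2^2+s_2|\nabla_{s_2}u|_2^2-\frac{1}{2}\int_{\R^d}\langle\nabla V,x\rangle u^2\,dx-d\int_{\R^d}\widetilde{G}(u)\,dx=0$, which is $P(u)=0$.

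The step I expect to be the main obstacle is the rigorous justification of the Pohozaev computation for the nonlocal operators $(-\Delta)^{s_i}$: the admissibility of $x\cdot\nabla u$ as a test direction and the validity of the fractional Pohozaev identity require sufficient regularity and decay of the solution $u$. The scaling formulation partly circumvents this, since Proposition \ref{PP} obtains $\Psi_u'(t)=\frac{1}{t}P(t*u)$ merely by differentiating the explicit expression for $\Psi_u(t)$; there the only delicate point is ensuring that $\Psi_u'(1)=0$ genuinely follows from constrained criticality along the smooth $L^2$-preserving curve $t\mapsto t*u$, which I would establish through the Lagrange-multiplier characterization of solutions to \eqref{maineq}.
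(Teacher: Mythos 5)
Your ``more hands-on'' second route is precisely the paper's proof of Lemma \ref{P}: test \eqref{maineq} with $u$ and with $x\cdot\nabla u$, obtain the Nehari identity and the Pohozaev identity, and add $\frac{d}{2}$ times the former to the latter; the $\lambda$-terms and the pure-$V$ terms cancel, leaving $s_1|\nabla_{s_1}u|_{2}^{2}+s_2|\nabla_{s_2}u|_{2}^{2}-\frac{1}{2}\int_{\R^d}\langle\nabla V(x),x\rangle u^{2}dx-d\int_{\R^d}\widetilde{G}(u)dx=0$, which is $P(u)=0$. Your two identities and the weight $\frac{d}{2}$ match the paper's \eqref{p1}--\eqref{p2} exactly, and your version operates at the same level of rigor as the paper, which likewise performs the Pohozaev computation formally (with the rigorous fractional Pohozaev identity of \cite{gamma} as background).

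Your primary route, however, has a genuine gap, and your closing claim that the scaling formulation ``partly circumvents'' the Pohozaev difficulty gets the situation backwards. Constrained criticality means $J'(u)+\lambda u=0$ in $(H^{s_1,s_2}(\R^d))^{*}$, so it can only be tested against elements of $H^{s_1,s_2}(\R^d)$. The curve $t\mapsto t*u$ is \emph{not} differentiable in the $H^{s_1,s_2}$-topology for a general $u$ in that space: its formal velocity at $t=1$, namely $\frac{d}{2}u+x\cdot\nabla u$, need not belong to $H^{s_1,s_2}(\R^d)$, nor even to $L^{2}(\R^d)$. Hence the pairing $\langle J'(u),\frac{d}{2}u+x\cdot\nabla u\rangle$ and the chain-rule step $\Psi_u'(1)=\langle J'(u),\partial_t(t*u)|_{t=1}\rangle$ are unjustified. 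It is true that $t\mapsto\Psi_u(t)$ is differentiable as a real-valued function (this is what Proposition \ref{PP} computes from the explicit scaling formulas), but identifying $\Psi_u'(1)$ with a dual pairing against $J'(u)$ requires exactly the regularity and decay of $u$ that the Pohozaev computation itself needs. If your argument worked as stated, every critical point of every such functional would satisfy its Pohozaev identity with no regularity theory whatsoever, which is false; this is precisely why \cite{gamma} is a nontrivial result. The ``Lagrange-multiplier characterization'' you invoke to close the gap is just the statement $J'(u)+\lambda u=0$ again and is subject to the same restriction on admissible test directions, so it resolves nothing. In short: your Route 2 is the proof (and coincides with the paper's); Route 1 should either be dropped or be supplemented by a regularity bootstrap for solutions of \eqref{maineq} together with the rigorous fractional Pohozaev identity.
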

\begin{proof}
Let $u$ be a solution of \eqref{maineq}. Thus, multiplying
\eqref{maineq} by $u$ and $x\cdot \nabla u$ respectively, and then
integrating by parts, we get
 \begin{equation}\label{p1}
|\nabla_{s_{1}}u|_{2}^{2}+|\nabla_{s_{2}}u|_{2}^{2}+\displaystyle
\int_{\mathbb{R}^{d}}V(x)u^{2}dx+\lambda
|u|_{2}^{2}-\int_{\mathbb{R}^{d}}g(u)udx=0
 \end{equation}
 and
 \begin{equation}\label{p2}
\frac{2s_1-d}{2}|\nabla_{s_{1}}u|_{2}^{2}+\frac{2s_2-d}{2}|\nabla_{s_{2}}u|_{2}^{2}-\frac{d}{2}\int_{\mathbb{R}^{d}}V(x)u^{2}dx-\int_{\mathbb{R}^{d}}\frac{\langle\nabla
V(x),x\rangle}{2}
u^{2}dx-\frac{d\lambda}{2}|u|_{2}^{2}+d\int_{\mathbb{R}^{d}}G(u)dx=0.
 \end{equation}
 From \eqref{p1} and \eqref{p2}, we obtain
 $$s_1|\nabla_{s_{1}}u|_{2}^{2}+s_2|\nabla_{s_{2}}u|_{2}^{2}-\int_{\mathbb{R}^{d}}\frac{\langle\nabla
V(x),x\rangle}{2} u^{2}dx-d\int_{\mathbb{R}^{d}}
[\frac{g(u)u}{2}-G(u)]dx=0,$$
 which is exactly $P(u)=0.$
\end{proof}

\begin{Pro}\label{psi}
Let $u\in S_a$. Then $t\in \R^+$ is a critical point for $\Psi_u(t)$ if and only if $t*u\in \mathcal{P}_a$.
\end{Pro}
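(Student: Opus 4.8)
The plan is to reduce the statement entirely to the identity already established in Proposition \ref{PP}, namely $\Psi_u'(t)=\frac{1}{t}P(t*u)$ for all $u\in H^{s_1,s_2}(\R^d)$ and all $t\in\R^+$. This formula is the whole engine of the proof, so my first move is simply to invoke it. Before doing so, I would record a second, equally elementary, ingredient: the fiber map $u\mapsto t*u$ preserves the $L^2$-norm (this was built into its definition in Section $2$). Consequently, since $u\in S_a$ means $|u|_2^2=a$, we automatically have $|t*u|_2^2=a$, i.e. $t*u\in S_a$ for every $t>0$. Thus the membership in $S_a$ is free of charge and never in question; only the constraint $P(t*u)=0$ carries content.

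With these two facts in hand the equivalence is immediate in both directions. First suppose $t\in\R^+$ is a critical point of $\Psi_u$, so that $\Psi_u'(t)=0$. By Proposition \ref{PP} this reads $\frac{1}{t}P(t*u)=0$. Here I would stress the role of the hypothesis $t\in\R^+$, meaning $t>0$: since $\frac{1}{t}\neq 0$, the product vanishes if and only if $P(t*u)=0$, that is, $t*u\in\mathcal{P}$. Combining this with $t*u\in S_a$ obtained above yields $t*u\in\mathcal{P}\cap S_a=\mathcal{P}_a$. Conversely, if $t*u\in\mathcal{P}_a$ then in particular $P(t*u)=0$, and again by Proposition \ref{PP} we get $\Psi_u'(t)=\frac{1}{t}P(t*u)=0$, so $t$ is a critical point of $\Psi_u$. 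This closes the loop.

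There is no genuine obstacle here: the statement is a direct bookkeeping consequence of the preparatory Proposition \ref{PP} together with the norm-preserving property of the scaling. The only point requiring the slightest care is the positivity $t>0$, which guarantees that the factor $\frac{1}{t}$ is finite and nonzero and hence can be cancelled freely; without it the equivalence would degenerate. For this reason I would keep the proof to a single short paragraph that states the identity, notes $t*u\in S_a$, and reads off the two implications from the sign of $\frac{1}{t}$.
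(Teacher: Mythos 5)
Your proposal is correct and follows exactly the paper's argument: the paper proves this proposition in one line by invoking the identity $\Psi_u'(t)=\frac{1}{t}P(t*u)$ from Proposition \ref{PP}. Your additional remarks (that $t*u\in S_a$ by norm preservation of the fiber map, and that $t>0$ lets one cancel the factor $\frac{1}{t}$) merely make explicit the bookkeeping the paper leaves implicit.
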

\begin{proof}
The proof follows directly by using the fact that
$$\Psi'_{u}(t)=\frac{1}{t}P[t*u], \ \ \mbox{for all} \ \ t\in \R^+ \ \ \mbox{and} \ \ u\in S_a.$$
\end{proof}

\begin{Pro}\label{p4.3}
For any critical point of $J |_{\mathcal{P}_a}$, if $(\Psi_u)''(1)\neq 0$, then $\exists \lambda \in \R$ such that
\[J'(u)+\lambda u=0.\]
\end{Pro}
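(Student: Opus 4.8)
The plan is to run the standard ``natural constraint'' argument: I will show that $\mathcal{P}_a$ is a natural constraint for $J$ restricted to $S_a$, so that a critical point of $J|_{\mathcal{P}_a}$ is in fact a free critical point of $J$ on the sphere $S_a$, which by the Lagrange multiplier rule produces the equation $J'(u)+\lambda u=0$. Concretely, since $\mathcal{P}_a=S_a\cap\mathcal{P}$ is cut out of $H^{s_1,s_2}(\R^d)$ by the two functionals $u\mapsto|u|_2^2-a$ and $u\mapsto P(u)$, and $u$ is a critical point of $J|_{\mathcal{P}_a}$, the differential $J'(u)$ must vanish on $T_u\mathcal{P}_a=\{v:\langle u,v\rangle=0,\ \langle P'(u),v\rangle=0\}$. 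Provided $u$ and $P'(u)$ are linearly independent, this forces $J'(u)=\lambda u+\nu P'(u)$ for some $\lambda,\nu\in\R$. The whole point is then to show that the hypothesis $(\Psi_u)''(1)\neq0$ kills the extra multiplier, i.e.\ $\nu=0$.

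The device that produces $\nu=0$ is the fiber map itself. First I would introduce the curve $\gamma(t)=t*u$ for $t$ near $1$; by construction $|\gamma(t)|_2^2=a$, so $\gamma$ stays on $S_a$, with $\gamma(1)=u$ and tangent vector $w:=\gamma'(1)$ lying in $T_uS_a$ (in particular $\langle u,w\rangle=0$, which also follows by differentiating $\tfrac12|\gamma(t)|_2^2\equiv\tfrac a2$ at $t=1$). Using Proposition \ref{PP}, namely $\Psi_u'(t)=\tfrac1tP(t*u)$, I record the relevant pairings against $w$ as one-variable derivatives along $\gamma$: since $J(\gamma(t))=\Psi_u(t)$ one has $\langle J'(u),w\rangle=\Psi_u'(1)=P(u)=0$ (as $u\in\mathcal{P}_a$); and since $P(\gamma(t))=t\,\Psi_u'(t)$ one has $\langle P'(u),w\rangle=\Psi_u'(1)+\Psi_u''(1)=(\Psi_u)''(1)$.

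To conclude, I would simply test the Lagrange identity $J'(u)=\lambda u+\nu P'(u)$ against $w$. Substituting the three computations gives $0=\lambda\cdot0+\nu\,(\Psi_u)''(1)=\nu\,(\Psi_u)''(1)$. Since the hypothesis is precisely $(\Psi_u)''(1)\neq0$, this forces $\nu=0$, whence $J'(u)=\lambda u$; relabelling the multiplier yields $J'(u)+\lambda u=0$, as claimed.

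The main obstacle, and the only place where real care is needed, is the justification of the two-multiplier Lagrange rule, i.e.\ that $\mathcal{P}_a$ is a genuine codimension-one submanifold of $S_a$ near $u$ so that indeed $J'(u)\in\mathrm{span}\{u,P'(u)\}$. Here the hypothesis does double duty: because $w\in T_uS_a$ and $\langle P'(u),w\rangle=(\Psi_u)''(1)\neq0$, the functional $P'(u)$ does not vanish on $T_uS_a$, so $P'(u)$ is not proportional to $u$ and the constraint map is a submersion at $u$. I would also stress that the argument never requires $w=\tfrac d2u+x\cdot\nabla u$ to itself belong to $H^{s_1,s_2}(\R^d)$: every pairing is phrased as a derivative in $t$ of the smooth scalar functions $t\mapsto J(t*u)$ and $t\mapsto P(t*u)$, which sidesteps any regularity issue coming from the term $x\cdot\nabla u$.
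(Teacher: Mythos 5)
Your proof is correct and follows essentially the same route as the paper: both invoke the two-constraint Lagrange rule to write $J'(u)+\lambda u+\mu P'(u)=0$ and then kill the extra multiplier by differentiating along the fiber curve $t\mapsto t*u$ at $t=1$, where Proposition \ref{PP}, $P(u)=0$ and $\langle u,w\rangle=0$ reduce everything to $\mu\,\Psi_u''(1)=0$. The only differences are cosmetic --- the paper packages the computation as $\frac{\partial}{\partial t}\Phi(t*u)|_{t=1}=0$ for the combined functional $\Phi=J+\frac{\lambda}{2}|\cdot|_2^2+\mu P$ rather than testing the identity against the tangent vector --- plus your added verification of the submersion hypothesis behind the Lagrange rule, which the paper leaves implicit.
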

\begin{proof}
Let $u\in H^{s_1,s_2}(\mathbb{R}^{d})$ be a critical point of $J$
constraint on $\mathcal{P}_a$, then there are $\lambda,\mu\in
\mathbb{R}$ such that
$$J'(u)+\lambda u+\mu
P'(u)=0.$$ It is sufficient to prove that $\mu=0$. It is easy to
see that $u$ satisfies the following identity
$$\frac{\partial}{\partial t} \Phi (t*u)|_{t=1}=0, $$
where
\begin{align*}
\Phi(t*u)&=J(t*u)+\frac{\lambda}{2}|t*u|_{2}^{2}+\mu P(t*u)\\
&=\Psi_{u}(t)+\frac{\lambda}{2}|u|_{2}^{2}+\mu t \Psi^{'}_{u}(t).
\end{align*}
In the last equality we used Proposition \ref{PP}. Then
\begin{align*}
\frac{\partial}{\partial t} \Phi (t*u)|_{t=1}&=\Psi'_{u}(1)+\mu
\Psi''_{u}(1)+\mu \Psi^{'}_{u}(1)\\
&=(1+\mu)\Psi'_{u}(1)+\mu \Psi''_{u}(1)\\
&=(1+\mu)P(u)+\mu \Psi''_{u}(1)\\
&=\mu \Psi''_{u}(1).
\end{align*}
It follows, since $\Psi''_{u}(1)\neq0,$ that $\mu=0.$
\end{proof}

\begin{Lem}\label{lem4.2}
Suppose that  $(G_1)$-$(G_2)$ and $(V_2)$ are satisfied, then for
any $a>0$, $\exists \delta_a>0$ such that
\[\inf\{t>0: \exists u\in S_a, |\nabla_{s_{1}} u|_2^2+|\nabla_{s_{2}} u|_2^2=1, \ \text{such\ that\ } t*u\in \mathcal{P}_a\}\geq \delta_a.\]
Consequently,
$$\displaystyle\inf_{u\in \mathcal{P}_a}(|\nabla_{s_1}u|_2^2+|\nabla_{s_2}u|_2^2)\geq
\delta_a^2>0.$$
\end{Lem}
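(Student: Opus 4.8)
The plan is to establish the first (infimum) inequality by an explicit scaling estimate on the constraint $P(t\ast u)=0$, and then to deduce the gradient bound as a direct consequence using the fiber map. Fix $u\in S_a$ with $|\nabla_{s_1}u|_2^2+|\nabla_{s_2}u|_2^2=1$ and suppose $t\ast u\in\mathcal{P}_a$, that is $P(t\ast u)=0$. Using $|\nabla_s(t\ast u)|_2^2=t^{2s}|\nabla_s u|_2^2$ and the change of variables $y=tx$, which yields $\int_{\R^d}W(x)(t\ast u)^2dx=\int_{\R^d}W(x/t)u^2dx$ (recall $W(x)=\tfrac12\langle\nabla V(x),x\rangle$ appears in $P$ through $(V_2)$, and $\Psi_u'(t)=\tfrac1t P(t\ast u)$ from Proposition \ref{PP}), the identity $P(t\ast u)=0$ becomes
\[
s_1t^{2s_1}|\nabla_{s_1}u|_2^2+s_2t^{2s_2}|\nabla_{s_2}u|_2^2-\int_{\R^d}W(x)(t\ast u)^2\,dx=\frac{d}{t^d}\int_{\R^d}\widetilde{G}\big(t^{\frac d2}u\big)\,dx.
\]
The decisive device, and the step I expect to be the main obstacle, is that the potential term must be controlled uniformly in $t$: the point is to apply $(V_2)$ not to $u$ but to the rescaled function $t\ast u$, so that $\big|\int_{\R^d}W(x)(t\ast u)^2dx\big|\le\sigma_2\big(t^{2s_1}|\nabla_{s_1}u|_2^2+t^{2s_2}|\nabla_{s_2}u|_2^2\big)$.

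Since $(V_2)$ forces $\sigma_2<s_1-\tfrac{(\beta-2)d}{2\beta}<s_1\le s_2$, the left-hand side of the displayed identity is bounded below by $(s_1-\sigma_2)\big(t^{2s_1}|\nabla_{s_1}u|_2^2+t^{2s_2}|\nabla_{s_2}u|_2^2\big)$, and in the regime $t\le 1$ (the only one relevant for a lower bound on $t$) this is at least $(s_1-\sigma_2)t^{2s_2}$, using the normalization $|\nabla_{s_1}u|_2^2+|\nabla_{s_2}u|_2^2=1$ and $t^{2s_1}\ge t^{2s_2}$. For the right-hand side, Remark \ref{rem} gives $\widetilde{G}(s)\le(\tfrac\beta2-1)C_2(|s|^\alpha+|s|^\beta)$, hence $\tfrac{d}{t^d}\int_{\R^d}\widetilde{G}(t^{\frac d2}u)dx\le C\big(t^{\frac{d(\alpha-2)}2}|u|_\alpha^\alpha+t^{\frac{d(\beta-2)}2}|u|_\beta^\beta\big)$, and the Gagliardo--Nirenberg inequality \eqref{gagliardo} together with $|u|_2^2=a$ and $|\nabla_{s_2}u|_2^2\le1$ bounds $|u|_\alpha^\alpha$ and $|u|_\beta^\beta$ by a constant $C_a$ depending only on $a$.

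Combining the two estimates and dividing by $t^{2s_2}$, every admissible $t\le1$ satisfies
\[
s_1-\sigma_2\le C_a\Big(t^{\frac{d(\alpha-2)}2-2s_2}+t^{\frac{d(\beta-2)}2-2s_2}\Big).
\]
By $(G_2)$ one has $\tfrac{d(\alpha-2)}2>2s_2$ and $\tfrac{d(\beta-2)}2>\tfrac{d(\alpha-2)}2>2s_2$, so both exponents are strictly positive; the right-hand side therefore tends to $0$ as $t\to0^+$, while the left-hand side is a fixed positive constant. Consequently there exists $\delta_a\in(0,1]$, depending only on $a$, for which this inequality fails whenever $t<\delta_a$, so no admissible $t$ can lie below $\delta_a$, which is precisely the asserted infimum bound.

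For the consequence, let $w\in\mathcal{P}_a$ be arbitrary. The map $t\mapsto t^{-2s_1}|\nabla_{s_1}w|_2^2+t^{-2s_2}|\nabla_{s_2}w|_2^2$ is continuous, strictly decreasing, tends to $+\infty$ as $t\to0^+$ and to $0$ as $t\to+\infty$, so it equals $1$ at a unique $t_w>0$; then $u:=(1/t_w)\ast w\in S_a$ satisfies $|\nabla_{s_1}u|_2^2+|\nabla_{s_2}u|_2^2=1$ and $w=t_w\ast u\in\mathcal{P}_a$, whence $t_w\ge\delta_a$ by the first part. Therefore
\[
|\nabla_{s_1}w|_2^2+|\nabla_{s_2}w|_2^2=t_w^{2s_1}|\nabla_{s_1}u|_2^2+t_w^{2s_2}|\nabla_{s_2}u|_2^2\ge\min\big(t_w^{2s_1},t_w^{2s_2}\big)\ge\min\big(\delta_a^{2s_1},\delta_a^{2s_2}\big)>0,
\]
which is a positive lower bound independent of $w$; after relabeling the constant this yields $\inf_{u\in\mathcal{P}_a}\big(|\nabla_{s_1}u|_2^2+|\nabla_{s_2}u|_2^2\big)\ge\delta_a^2>0$, completing the proof.
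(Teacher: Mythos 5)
Your proof is correct and follows essentially the same route as the paper: the decisive step in both is to apply $(V_2)$ to the rescaled function $t*u$ (after the change of variables turning $W(x/t)u^2$ into $W(x)(t*u)^2$), obtain the lower bound $(s_1-\sigma_2)\left(t^{2s_1}|\nabla_{s_1}u|_2^2+t^{2s_2}|\nabla_{s_2}u|_2^2\right)$, and then dominate the $\widetilde{G}$-term by powers of $t$ whose exponents are positive thanks to $\alpha>2+\frac{4s_2}{d}$ and Gagliardo--Nirenberg. Your streamlining of the case analysis (treating only $t\le 1$ since $t>1$ is trivially bounded below) and your explicit rescaling argument for the ``consequently'' part, which the paper leaves implicit, are both fine.
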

\begin{proof}
We recall that
\begin{equation}\label{m0}
\Psi'_{u}(t)=s_1t^{2s_1-1}|\nabla_{s_1}u|_{2}^{2}+s_2t^{2s_2-1}|\nabla_{s_2}u|_{2}^{2}-\frac{1}{t}\displaystyle
\int_{\mathbb{R}^{d}}W(\frac{x}{t})u^{2}dx
-\frac{d}{t^{d+1}}\displaystyle
\int_{\mathbb{R}^{d}}\widetilde{G}(t^{\frac{d}{2}}u(x))dx.
\end{equation} In light of condition $(V_2)$, we infer that
\begin{align}\label{m1}
&s_1t^{2s_1-1}|\nabla_{s_1}u|_{2}^{2}+s_2t^{2s_2-1}|\nabla_{s_2}u|_{2}^{2}-\frac{1}{t}\displaystyle
\int_{\mathbb{R}^{d}}W(\frac{x}{t})u^{2}dx\\
&=s_1t^{2s_1-1}|\nabla_{s_1}u|_{2}^{2}+s_2t^{2s_2-1}|\nabla_{s_2}u|_{2}^{2}-\frac{1}{t}\displaystyle
\int_{\mathbb{R}^{d}} W(x)(t*u)^{2}dx\nonumber\\
&\geq
s_1t^{2s_1-1}|\nabla_{s_1}u|_{2}^{2}+s_2t^{2s_2-1}|\nabla_{s_2}u|_{2}^{2}-\frac{\sigma_2}{t}
[|\nabla_{s_1}(t*u)|_{2}^{2}+|\nabla_{s_1}(t*u)|_{2}^{2}]\nonumber\\
&\geq
s_1t^{2s_1-1}|\nabla_{s_1}u|_{2}^{2}+s_2t^{2s_2-1}|\nabla_{s_2}u|_{2}^{2}-\sigma_2
[t^{2s_1-1}|\nabla_{s_1}(u)|_{2}^{2}+t^{2s_2-1}|\nabla_{s_2}(u)|_{2}^{2}]\nonumber\\
&\geq
(s_1-\sigma_2)[t^{2s_1-1}|\nabla_{s_1}(u)|_{2}^{2}+t^{2s_2-1}|\nabla_{s_2}(u)|_{2}^{2}]\nonumber.
\end{align}
Let $u\in S_a$ with $ |\nabla_{s_{1}} u|_2^2+|\nabla_{s_{2}}
u|_2^2=1$ and $t>0$ such that $t*u\in\mathcal{P}_a$. Then, invoking
Proposition \ref{psi}  and exploiting \eqref{m0} and \eqref{m1}, we
deduce that
\begin{align*}
(s_1-\sigma_2)\min(t^{2s_1-1},t^{2s_2-1}) &\leq
s_1t^{2s_1-1}|\nabla_{s_1}u|_{2}^{2}+s_2t^{2s_2-1}|\nabla_{s_2}u|_{2}^{2}-\frac{1}{t}\displaystyle
\int_{\mathbb{R}^{d}}W(\frac{x}{t})u^{2}dx\\
&=\frac{d}{t^{d+1}}\displaystyle
\int_{\mathbb{R}^{d}}\widetilde{G}(t^{\frac{d}{2}}u(x))dx.
\end{align*}
That is
\begin{equation}\label{m2}
(s_1-\sigma_2)\leq\frac{d\int_{\mathbb{R}^{d}}g(t^{\frac{d}{2}}u)udx}{2t^{\frac{d}{2}+1}(\min(t^{2s_1-1},t^{2s_2-1}))}
-\frac{d\int_{\mathbb{R}^{d}}G(t^{\frac{d}{2}}u)dx}{t^{d+1}(\min(t^{2s_1-1},t^{2s_2-1}))}
\end{equation}
Now, to conclude our proof, we distinguish two cases: \\
\textbf{Case $1$:} If $t>1$. Then, from \eqref{m2}, we get
\begin{equation}\label{m3}
(s_1-\sigma_2)\leq\frac{d\int_{\mathbb{R}^{d}}g(t^{\frac{d}{2}}u)udx}{2t^{\frac{d}{2}+2s_1}}
-\frac{d\int_{\mathbb{R}^{d}}G(t^{\frac{d}{2}}u)dx}{t^{d+2s_1}}.
\end{equation}
This implies, using condition $(G_2)$, that
\begin{equation}\label{m4}
(s_1-\sigma_2)\leq\frac{d\int_{\mathbb{R}^{d}}g(t^{\frac{d}{2}}u)udx}{2t^{\frac{d}{2}+2s_1}}
-\frac{d\int_{\mathbb{R}^{d}}g(t^{\frac{d}{2}}u)udx}{\beta
t^{\frac{d}{2}+2s_1}}=dt^{-d-2s_1}(\frac{1}{2}-\frac{1}{\beta})\int_{\mathbb{R}^{d}}g(t^{\frac{d}{2}}u)t^{\frac{d}{2}}udx.
\end{equation}
Again by $(G_2)$ and the fact that $ |\nabla_{s_{1}}
u|_2^2+|\nabla_{s_{1}} u|_2^2=1$ and $|u|_{2}^{2}=a$, there is $C>0$
such that
\begin{equation}\label{m5}
\int_{\mathbb{R}^{d}}g(t^{\frac{d}{2}}u)t^{\frac{d}{2}}udx\leq
C[t^{\frac{d}{2}\alpha}+t^{\frac{d}{2}\alpha}].
\end{equation}
Consequently, by combining \eqref{m4} and \eqref{m5}, we infer that
$$(s_1-\sigma_2)\leq Cd(\frac{1}{2}-\frac{1}{\beta})[t^{\frac{d}{2}\alpha-d-2s_1}+t^{\frac{d}{2}\beta-d-2s_1}].$$
So by, $2+\frac{4s_1}{d}<\alpha<\beta$, we obtain the lower bound of
$\delta_a$.\\
 \textbf{Case $2$:} If $t<1$. By the same argument used in case $1$,
we obtain
$$
(s_1-\sigma_2)\leq
Cd(\frac{1}{2}-\frac{1}{\beta})[t^{\frac{d}{2}\alpha-d-2s_2}+t^{\frac{d}{2}\beta-d-2s_2}].
$$
So by, $2+\frac{4s_2}{d}<\alpha<\beta$, we obtain the lower bound of
$\delta_a$.\\
This completes the proof.
\end{proof}
\begin{Lem}\label{-}
Assume that $(G_1)-(G_3)$ and $(V_3)$  hold. Then for any $u\in
\mathcal{P}_a$, we have $(\Psi_u)''(1)<0$. And it is a natural
constraint of $J|_{S_a}$.
\end{Lem}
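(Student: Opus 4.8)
The plan is to differentiate the fiber map once more and read off the sign from the Pohozaev constraint $P(u)=0$. Starting from Proposition \ref{PP}, which gives $\Psi_u'(t)=\frac{1}{t}P(t*u)$, I would differentiate the explicit expression
\[\Psi_u'(t)=s_1t^{2s_1-1}|\nabla_{s_1}u|_2^2+s_2t^{2s_2-1}|\nabla_{s_2}u|_2^2-\frac{1}{t}\int_{\R^d}W(\tfrac{x}{t})u^2dx-\frac{d}{t^{d+1}}\int_{\R^d}\widetilde{G}(t^{\frac{d}{2}}u)dx\]
one more time in $t$ and evaluate at $t=1$. Exactly as in the computation of $\Psi_{\infty,u}''$ in Lemma \ref{max}, the kinetic and nonlinear parts produce $s_1(2s_1-1)|\nabla_{s_1}u|_2^2+s_2(2s_2-1)|\nabla_{s_2}u|_2^2+d(d+1)\int_{\R^d}\widetilde{G}(u)dx-\frac{d^2}{2}\int_{\R^d}\widetilde{G}'(u)u\,dx$, while the potential part contributes the new terms $\int_{\R^d}\langle\nabla W(x),x\rangle u^2dx+\int_{\R^d}W(x)u^2dx$, obtained by differentiating $t^{-1}\int W(x/t)u^2dx$.

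Next I would remove the undetermined integrals using the constraint and the structural hypotheses. Since $u\in\mathcal{P}_a$ means $P(u)=0$, I substitute $d\int_{\R^d}\widetilde{G}(u)dx=s_1|\nabla_{s_1}u|_2^2+s_2|\nabla_{s_2}u|_2^2-\int_{\R^d}W(x)u^2dx$ to eliminate $\int\widetilde{G}(u)dx$; hypothesis $(G_3)$ gives $\int_{\R^d}\widetilde{G}'(u)u\,dx\geq\alpha\int_{\R^d}\widetilde{G}(u)dx$ (and $\widetilde{G}\geq0$ by Remark \ref{rem}), so the term $-\frac{d^2}{2}\int\widetilde{G}'(u)u\,dx$ is bounded above by $-\frac{d^2\alpha}{2}\int\widetilde{G}(u)dx$; finally the definition $Y(x)=(d\alpha/2-d-1)W(x)+\langle\nabla W(x),x\rangle$ lets me trade $\langle\nabla W,x\rangle$ for $Y$ and $W$. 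After collecting, the several $\int W(x)u^2dx$ contributions combine with coefficient exactly $1$, leaving
\[\Psi_u''(1)\leq -s_1\Big(\tfrac{d\alpha}{2}-d-2s_1\Big)|\nabla_{s_1}u|_2^2-s_2\Big(\tfrac{d\alpha}{2}-d-2s_2\Big)|\nabla_{s_2}u|_2^2+\int_{\R^d}Y(x)u^2dx+\int_{\R^d}W(x)u^2dx.\]
Because $\alpha>2+\tfrac{4s_2}{d}$, i.e. $\tfrac{d\alpha}{2}>d+2s_2>d+2s_1$, both bracketed factors are strictly positive, so the two leading terms are strictly negative.

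It remains to absorb the potential integrals. Applying $(V_2)$ and $(V_3)$ gives $\int_{\R^d}Y u^2dx+\int_{\R^d}W u^2dx\leq(\sigma_2+\sigma_3)\big(|\nabla_{s_1}u|_2^2+|\nabla_{s_2}u|_2^2\big)$, whence
\[\Psi_u''(1)\leq\Big[\sigma_2+\sigma_3-s_1(\tfrac{d\alpha}{2}-d-2s_1)\Big]|\nabla_{s_1}u|_2^2+\Big[\sigma_2+\sigma_3-s_2(\tfrac{d\alpha}{2}-d-2s_2)\Big]|\nabla_{s_2}u|_2^2.\]
The hard part will be checking that both brackets are negative: this is precisely where the admissible ranges of $\sigma_1,\sigma_2,\sigma_3$ are used, since one must verify $\sigma_2+\sigma_3<s_i(\tfrac{d\alpha}{2}-d-2s_i)$ for $i=1,2$ by feeding in $\sigma_3\leq s_1^2(\tfrac{d\alpha}{2}-d-2s_2)$ from $(V_3)$ together with the bound on $\sigma_2$ from $(V_2)$ and the constraint on $\sigma_1$ in $(V_1)$. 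Granting this, since $u\neq0$ forces $|\nabla_{s_1}u|_2^2,|\nabla_{s_2}u|_2^2>0$, I obtain $\Psi_u''(1)<0$ for every $u\in\mathcal{P}_a$. The natural-constraint claim is then immediate: having shown $\Psi_u''(1)\neq0$ on all of $\mathcal{P}_a$, Proposition \ref{p4.3} applies to any critical point $u$ of $J|_{\mathcal{P}_a}$ and forces the multiplier $\mu$ in $J'(u)+\lambda u+\mu P'(u)=0$ to vanish, so that $J'(u)+\lambda u=0$; hence $\mathcal{P}_a$ is a natural constraint for $J|_{S_a}$.
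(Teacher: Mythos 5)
You follow the paper's strategy step for step: compute $\Psi_u''(1)$ (your expression agrees with \eqref{c2}), discard $-\tfrac{d^2}{2}\int\widetilde{G}'(u)u\,dx$ via $(G_3)$, eliminate $d\int\widetilde{G}(u)\,dx$ through the constraint $P(u)=0$, and repackage the potential terms using $Y$; the natural-constraint conclusion via Proposition \ref{p4.3} is also exactly the paper's. Moreover your bookkeeping is correct, and in fact more careful than the paper's: after substituting the constraint, the coefficient of $\int_{\R^d}W(x)u^2\,dx$ is $\tfrac{d\alpha}{2}-d$, so trading $\langle\nabla W(x),x\rangle$ for $Y$ leaves exactly the extra term $+\int_{\R^d}W(x)u^2\,dx$ that you retain, whereas in the chain \eqref{c3} the paper uses the coefficient $\tfrac{d\alpha}{2}-d-1$ at this point, i.e.\ it silently discards that term.

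The genuine gap is that the decisive quantitative step is never proved: you end with the brackets $\sigma_2+\sigma_3-s_i(\tfrac{d\alpha}{2}-d-2s_i)$ and simply say ``granting'' their negativity. That inequality is not a consequence of $(V_1)$--$(V_3)$, so it cannot be ``checked by feeding in the admissible ranges''. Concretely, the choice $d=2$, $s_1=0.89$, $s_2=0.9$, $\alpha=4.05$, $\beta=4.1$, $\sigma_1=0$ satisfies every structural constraint; $(V_2)$ then allows any $\sigma_2<\min\{0.377,\,0.125\}=0.125$ and $(V_3)$ allows $\sigma_3$ up to $s_1^2(\tfrac{d\alpha}{2}-d-2s_2)\approx 0.198$, while $\min_i s_i(\tfrac{d\alpha}{2}-d-2s_i)=s_2\cdot 0.25=0.225$. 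Taking $\sigma_2=0.12$ and $\sigma_3=0.198$ gives $\sigma_2+\sigma_3=0.318>0.225$, so your two brackets cannot both be made negative and your final display yields no sign. The paper's chain closes only because the range $\sigma_3\le s_1^2(\tfrac{d\alpha}{2}-d-2s_2)$ in $(V_3)$ is calibrated to absorb $\int_{\R^d}Y(x)u^2\,dx$ alone against $-s_1(\tfrac{d\alpha}{2}-d-2s_2)\left(|\nabla_{s_1}u|_2^2+|\nabla_{s_2}u|_2^2\right)$, with $\sigma_2$ never entering the last step; once your (correct) extra $W$-term is present, the quantity $\sigma_2+\sigma_3$ must be absorbed, and the stated hypotheses are not strong enough for that. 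So your proposal is incomplete exactly where the content of the lemma lies, and along your algebraically honest route it cannot be completed without strengthening $(V_2)$/$(V_3)$ --- or reproducing the paper's unjustified discarding of $\int_{\R^d}W(x)u^2\,dx$.
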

\begin{proof}
For any $u\in \mathcal{P}_a$, we have
\begin{equation}\label{c1}
s_1|\nabla_{s_1}u|_{2}^{2}+s_2|\nabla_{s_2}u|_{2}^{2}-\displaystyle
\int_{\mathbb{R}^{d}}W(x)u^2
dx=d\int_{\mathbb{R}^{d}}\widetilde{G}(u)dx.
\end{equation}
On the other hand, a direct computation shows that
\begin{align}\label{c2}
\Psi''_{u}(1)&=(2s_{1}-1)s_1|\nabla_{s_1}u|_{2}^{2}+(2s_{2}-1)s_2|\nabla_{s_2}u|_{2}^{2}+\displaystyle
\int_{\mathbb{R}^{d}} W(x) u^{2}dx+\displaystyle
\int_{\mathbb{R}^{d}} \langle \nabla W(x),x\rangle
u^{2}dx\\
&+d(d+1)\displaystyle
\int_{\mathbb{R}^{d}}\widetilde{G}(u)dx-\frac{d^{2}}{2}\displaystyle
\int_{\mathbb{R}^{d}}\widetilde{G}^{'}(u)udx\nonumber.
\end{align}
It follows, using $(G_3)$, $(V_3)$ and \eqref{c2}, that
\begin{align}\label{c3}
\Psi''_{u}(1)&\leq
(2s_{1}-1)s_1|\nabla_{s_1}u|_{2}^{2}+(2s_{2}-1)s_2|\nabla_{s_2}u|_{2}^{2}+\displaystyle
\int_{\mathbb{R}^{d}} W(x) u^{2}dx+\displaystyle
\int_{\mathbb{R}^{d}} \langle \nabla W(x),x\rangle
u^{2}dx\\
&+[(d+1)-\frac{d\alpha}{2}]d\displaystyle
\int_{\mathbb{R}^{d}}\widetilde{G}(u)dx\nonumber\\
&=
(2s_{1}-1)s_1|\nabla_{s_1}u|_{2}^{2}+(2s_{2}-1)s_2|\nabla_{s_2}u|_{2}^{2}+\displaystyle
\int_{\mathbb{R}^{d}} W(x) u^{2}dx+\displaystyle
\int_{\mathbb{R}^{d}} \langle \nabla W(x),x\rangle
u^{2}dx\nonumber\\
&+[(d+1)-\frac{d\alpha}{2}][s_1|\nabla_{s_1}u|_{2}^{2}+s_2|\nabla_{s_2}u|_{2}^{2}-\displaystyle
\int_{\mathbb{R}^{d}}W(x)u^2]\nonumber\\
&\leq
[(d+2s_2)-\frac{d\alpha}{2}][s_1|\nabla_{s_1}u|_{2}^{2}+s_2|\nabla_{s_2}u|_{2}^{2}]+\displaystyle
\int_{\mathbb{R}^{d}} \langle \nabla
W(x),x\rangle u^{2}dx\nonumber\\
&+(\frac{d\alpha}{2}-d-1)\displaystyle
\int_{\mathbb{R}^{d}} W(x) u^{2}dx \nonumber\\
&\leq \displaystyle \int_{\mathbb{R}^{d}}Y(x)u^{2}dx-s_1
(\frac{d\alpha}{2}-d-2s_2)[|\nabla_{s_1}u|_{2}^{2}+|\nabla_{s_2}u|_{2}^{2}]\\
&\leq -[s_1
(\frac{d\alpha}{2}-d-2s_2)-\frac{\sigma_3}{s_1}][|\nabla_{s_1}u|_{2}^{2}+|\nabla_{s_2}u|_{2}^{2}]\nonumber\\
&<0\nonumber.
\end{align}
Thus, $\mathcal{P}_{a}^{+}=\mathcal{P}_{a}^{-}=\emptyset$.
Furthermore, by Proposition \ref{psi}, one can see that it is a
natural constraint of $J|_{S_a}$.\\
This ends the proof.
\end{proof}
\begin{Cor}\label{cor1}
Suppose that $(G_1)-(G_3)$ and $(V_1)-(V_3)$ hold. Then:\\
(i) For any $u\in H^{s_1,s_2}(\R^{d})\setminus \{0\}$, there exists
an unique $t_u>0$ such that $t_u*u\in \mathcal{P}$. Moreover,
\[\Psi_{u}(t_u)=J(t_u*u)=\max\limits_{t>0}J(t*u).\]
(ii) We have
\[C_a:=\inf\limits_{u\in\mathcal{P}_a}J(u)=\inf\limits_{u\in S_a}\max\limits_{t>0} J(t*u)>0.\]
\end{Cor}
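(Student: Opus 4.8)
The plan is to study the fiber map $\Psi_u(t)=J(t*u)$, whose derivative satisfies $\Psi_u'(t)=\frac{1}{t}P(t*u)$ by Proposition \ref{PP}, so that by Proposition \ref{psi} the dilations $t$ with $t*u\in\mathcal{P}$ are precisely the critical points of $\Psi_u$. For part (i), fix $u\neq 0$, put $a=|u|_2^2$ and write $T(u)=|\nabla_{s_1}u|_2^2+|\nabla_{s_2}u|_2^2$. I would first establish existence of a critical point by examining the two ends, mirroring Lemma \ref{max}. Using the scaling identities $|\nabla_{s_i}(t*u)|_2^2=t^{2s_i}|\nabla_{s_i}u|_2^2$, the change of variables $\int_{\R^d}W(x/t)u^2dx=\int_{\R^d}W(x)(t*u)^2dx$ together with $(V_2)$ to absorb the potential term, and Remark \ref{rem} to control $\int_{\R^d}\widetilde{G}(t*u)dx$, one obtains
$$P(t*u)\geq (s_1-\sigma_2)t^{2s_1}|\nabla_{s_1}u|_2^2+(s_2-\sigma_2)t^{2s_2}|\nabla_{s_2}u|_2^2-C\left(t^{\frac{d(\alpha-2)}{2}}+t^{\frac{d(\beta-2)}{2}}\right).$$
Since $(V_2)$ forces $\sigma_2<s_1$, and since $2s_1<2s_2<\frac{d(\alpha-2)}{2}<\frac{d(\beta-2)}{2}$, the positive powers dominate for $t$ small and the negative ones for $t$ large; hence $P(t*u)>0$ near $0$ and $P(t*u)\to-\infty$ as $t\to+\infty$, and the intermediate value theorem yields $t_u>0$ with $t_u*u\in\mathcal{P}$.

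For uniqueness and the maximality statement, I would show that every critical point of $\Psi_u$ is a strict local maximum. If $t_0$ is critical, then $v:=t_0*u\in\mathcal{P}_a$, and the identity $\Psi_v(s)=\Psi_u(st_0)$ gives $\Psi_u''(t_0)=t_0^{-2}\Psi_v''(1)$, which is negative by Lemma \ref{-}. Two distinct strict local maxima would force an interior local minimum between them, contradicting $\Psi_u''<0$ at every critical point; hence $t_u$ is unique. Since in addition $\Psi_u(t)\to 0$ as $t\to 0^+$ (every term vanishes) and $\Psi_u(t)\to-\infty$ as $t\to+\infty$, the sole critical point is the global maximum, so $J(t_u*u)=\max_{t>0}J(t*u)$.

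Part (ii) splits into a variational identity and a positivity estimate. For the identity I would note that $u\mapsto t_u*u$ maps $S_a$ onto $\mathcal{P}_a$: indeed $t_u*u\in\mathcal{P}$ with $|t_u*u|_2^2=a$, and any $v\in\mathcal{P}_a$ satisfies $t_v=1$ by uniqueness, hence is its own image. Combining this with $\max_{t>0}J(t*u)=J(t_u*u)$ from (i) and taking infima in both directions gives $\inf_{u\in S_a}\max_{t>0}J(t*u)=\inf_{v\in\mathcal{P}_a}J(v)=C_a$.

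The positivity $C_a>0$ is the main obstacle, since the potential cannot simply be discarded as in Lemma \ref{Ipositive}. I would argue uniformly on $\mathcal{P}_a$. First, $(V_1)$ gives $J(u)\geq\frac{1-\sigma_1}{2}T(u)-\int_{\R^d}G(u)dx$. Next, from $P(u)=0$, the bound $s_1|\nabla_{s_1}u|_2^2+s_2|\nabla_{s_2}u|_2^2\leq s_2 T(u)$, the control of $\int_{\R^d}W(x)u^2dx$ by $(V_2)$, and the inequality $(\tfrac{\alpha}{2}-1)G\leq\widetilde{G}$ from Remark \ref{rem}, one gets $d(\tfrac{\alpha}{2}-1)\int_{\R^d}G(u)dx\leq(s_2+\sigma_2)T(u)$, whence
$$J(u)\geq\left[\frac{1-\sigma_1}{2}-\frac{2(s_2+\sigma_2)}{d(\alpha-2)}\right]T(u).$$
The precise range of $\sigma_2$ imposed in $(V_2)$, namely $\sigma_2<\frac{d(\alpha-2)(1-\sigma_1)}{4}-s_2$, is exactly what renders the bracketed constant $c_0$ strictly positive. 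Finally Lemma \ref{lem4.2} furnishes $T(u)\geq\delta_a^2>0$ on $\mathcal{P}_a$, so $J(u)\geq c_0\delta_a^2>0$ uniformly, giving $C_a\geq c_0\delta_a^2>0$ and completing the proof.
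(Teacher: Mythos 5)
Your argument is correct and is essentially the paper's own proof: existence of a zero of $t\mapsto P(t*u)$ from its positivity for small $t$ and divergence to $-\infty$ for large $t$ (via $(V_2)$ and Remark \ref{rem}), uniqueness and maximality via Lemma \ref{-} plus the interior-minimum contradiction, the identification $C_a=\inf\limits_{u\in S_a}\max\limits_{t>0}J(t*u)$ read off directly from part (i), and positivity of $C_a$ from $P(u)=0$, $(V_1)$--$(V_2)$, Remark \ref{rem} and Lemma \ref{lem4.2}, exactly as in the paper. Two minor remarks: the conclusion $P(t*u)\to-\infty$ requires the matching \emph{upper} bound on $P(t*u)$, not the lower bound you displayed (it follows from the same two-sided inequalities in $(V_2)$ and Remark \ref{rem}, so this is only a presentational gap); and your explicit rescaling identity $\Psi_u''(t_0)=t_0^{-2}\Psi_{t_0*u}''(1)$ is a welcome clarification of the paper's tacit application of Lemma \ref{-} at critical points $t_0\neq 1$, just as your combined estimate with the $s_2$-based constant $\frac{1-\sigma_1}{2}-\frac{2(s_2+\sigma_2)}{d(\alpha-2)}$ is the correct (smaller) one, avoiding the slip in the paper's final display.
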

\begin{proof}
(i) Let $a:=|u|_{2}^{2}$, then $|\nabla _{s_{1}}u|_{2},|\nabla
_{s_{2}}u|_{2}>0$. From \eqref{m1} and assumptions $(G_1)-(G_2)$,
for $t$ small enough, we find that
\begin{align*}
\Psi'_{u}(t)&=s_1t^{2s_1-1}|\nabla_{s_1}u|_{2}^{2}+s_2t^{2s_2-1}|\nabla_{s_2}u|_{2}^{2}-\frac{1}{t}\displaystyle
\int_{\mathbb{R}^{d}}W(\frac{x}{t})u^{2}dx\\
&-\frac{d}{t^{d+1}}\displaystyle
\int_{\mathbb{R}^{d}}\widetilde{G}(t^{\frac{d}{2}}u(x))dx\\
&\geq
(s_1-\sigma_2)[t^{2s_1-1}|\nabla_{s_1}(u)|_{2}^{2}+t^{2s_2-1}|\nabla_{s_2}(u)|_{2}^{2}]\\
&-(\frac{\beta}{2}-1)\frac{d}{t^{d+1}}\displaystyle
\int_{\mathbb{R}^{d}}G(t^{\frac{d}{2}}u(x))dx \nonumber\\
&\geq (s_1-\sigma_2)[t^{2s_1-1}|\nabla_{s_1}(u)|_{2}^{2}+t^{2s_2-1}|\nabla_{s_1}(u)|_{2}^{2}]\\
&-(\frac{\beta}{2}-1)\frac{d}{t^{-\frac{\alpha d}{2}+
d+1}}\displaystyle \int_{\mathbb{R}^{d}}G(u(x))dx>0.
\end{align*}
Here, we used the fact that, $\alpha>2+\frac{4s_2}{d}$. Hence, there
exists some $t_0>0$ such that $\Psi_u$ increases in $(0,t_0)$. On
the other hand, for $t$ large enough, we have
\begin{align*}
\Psi_u(t)&=\frac{t^{2s_1}}{2}|\nabla_{s_1}u|_2^2+\frac{t^{2s_2}}{2}|\nabla_{s_2}u|_2^2+\frac{1}{2}\int_{\R^d}V(\frac{x}{t})u^2dx
-t^{-d}\int_{\R^d}G(t^{\frac{d}{2}}u(x))dx\\
&\leq\frac{t^{2s_1}}{2}|\nabla_{s_1}u|_2^2+\frac{t^{2s_2}}{2}|\nabla_{s_2}u|_2^2+\frac{1}{2}\int_{\R^d}V(\frac{x}{t})u^2dx
-t^{\frac{d\alpha}{2}-d}\int_{\R^d}G(u(x))dx,
\end{align*}
which implies that $\displaystyle \lim_{t\rightarrow +\infty}
\Psi_{u}(t)=-\infty.$ So there exists some $t_1>t_0$ such that
$$\Psi_{u}(t_1)=\displaystyle \max_{t>0}\Psi_{u}(t).$$
Hence, $\Psi'_{u}(t_1)=0$. It follows, from Proposition
\ref{psi}, that $t_1*u\in \mathcal{P}.$ \\
In what follows we show that $t_1$ is the unique maximum of
$\Psi_u$. Otherwise, suppose that there exists another $t_2>0$ such
that $t_2*u\in \mathcal{P}$. Then, by Lemma \ref{-}, we have that
$t_1$ and $t_2$ are two strict local maxima of $\Psi_u$. Without
loss of generality, we assume that $t_1<t_2$. Then there exists some
$t_3\in (t_1,t_2)$ such that
$$\Psi_{u}(t_3)=\displaystyle \min_{t\in [t_1,t_2]}\Psi_u(t).$$
So, $\Psi'_{u}(t_3)=0$ and $\Psi'_{u}(t_3)\geq 0$. This
 reaches to a contradiction with Lemma
\ref{-}.\\
(ii) From assertion (i), it follows directly that
\[C_a:=\inf\limits_{u\in\mathcal{P}_a}J(u)=\inf\limits_{u\in S_a}\max\limits_{t>0} J(t*u)>0.\]
It remains to show that $C_a>0.$ Let $u\in \mathcal{P}_a.$ In light
of assumptions $(G_1)-(G_3)$ and $(V_2)$, we have
\begin{align}\label{cc1}
d\frac{\alpha-2}{2}\displaystyle\int_{\mathbb{R}^{d}}G(u)dx&\leq d
\displaystyle\int_{\mathbb{R}^{d}}\widetilde{G}(u)dx=s_1|\nabla_{s_{1}}u|_{2}^{2}+s_2|\nabla_{s_{2}}u|_{2}^{2}-\displaystyle\int_{\mathbb{R}^{d}}W(x)u^{2}dx\\
&\leq
(s_1+\sigma_{2})|\nabla_{s_{1}}u|_{2}^{2}+(s_2+\sigma_{2})|\nabla_{s_{2}}u|_{2}^{2}.\nonumber
\end{align}
Then, using \eqref{cc1}, we infer that
\begin{align}\label{cc2}
J(u)&=\frac{1}{2}|\nabla_{s_1}u|_2^2+\frac{1}{2}|\nabla_{s_2}u|_2^2+\frac{1}{2}\int_{\R^d}V(x)u^2dx-\int_{\R^d}G(u)dx\\
&\geq
\frac{1-\sigma_1}{2}(|\nabla_{s_{1}}u|_{2}^{2}+|\nabla_{s_{2}}u|_{2}^{2})-\int_{\R^d}G(u)dx\nonumber\\
&\geq
(\frac{1-\sigma_1}{2}-\frac{2(s_1+\sigma_2)}{d(\alpha-2)})|\nabla_{s_{1}}u|_{2}^{2}+(\frac{1-\sigma_1}{2}-\frac{2(s_2+\sigma_2)}{d(\alpha-2)})(|\nabla_{s_{1}}u|_{2}^{2}
+|\nabla_{s_{2}}u|_{2}^{2})|\nabla_{s_{2}}u|_{2}^{2}\nonumber\\
&\geq
(\frac{1-\sigma_1}{2}-\frac{2(s_1+\sigma_2)}{d(\alpha-2)})(|\nabla_{s_{1}}u|_{2}^{2}+|\nabla_{s_{2}}u|_{2}^{2})\nonumber.
\end{align}
On the other hand, from conditions $(V_1)$ and $(V_2)$, we can
deduce that
\begin{equation}\label{cc3}
\frac{1-\sigma_1}{2}-\frac{2(s_1+\sigma_2)}{d(\alpha-2)}>0 \ \
\mbox{and} \ \
\frac{1-\sigma_1}{2}-\frac{2(s_2+\sigma_2)}{d(\alpha-2)}>0.
\end{equation}
It follows, from \eqref{cc2}, \eqref{cc3} and Lemma \ref{lem4.2},
that
$$C_a\geq (\frac{1-\sigma_1}{2}-\frac{2(s_1+\sigma_2)}{d(\alpha-2)})
\delta_{a}^{2}>0.
$$ This completes the proof.
\end{proof}
\begin{Cor}\label{cor2}
Suppose that the assumptions of Corollary \ref{cor1} are fulfilled.
Then, $J|_{\mathcal{P}_a}$ is coercive, i.e.,
\[\lim\limits_{\substack{u\in \mathcal{P}_a,\\ |\nabla_{s_1} u|_{2}^{2}+|\nabla_{s_2} u|_{2}^{2}\rightarrow \infty}} J(u)=+\infty.\]
\end{Cor}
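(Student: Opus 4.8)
The plan is to reduce coercivity to a pointwise lower bound on $J$ over $\mathcal{P}_a$ that was already produced, in slightly disguised form, inside the proof of Corollary \ref{cor1}. Specifically, I would exhibit a constant $C_0>0$, independent of $u$, such that
$$J(u)\ge C_0\left(|\nabla_{s_1}u|_2^2+|\nabla_{s_2}u|_2^2\right),\qquad \forall\, u\in\mathcal{P}_a.$$
Granting this, the conclusion is immediate: along any $u\in\mathcal{P}_a$ with $|\nabla_{s_1}u|_2^2+|\nabla_{s_2}u|_2^2\to\infty$, the right-hand side tends to $+\infty$, hence so does $J(u)$, which is exactly the asserted coercivity.

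To produce the bound I would rerun the computation \eqref{cc1}--\eqref{cc3} without discarding the gradient factor at the last step. First, for $u\in\mathcal{P}_a$ the constraint $P(u)=0$, combined with $(G_2)$ and the control of $\int_{\R^d}W(x)u^2dx$ furnished by $(V_2)$, gives, as in \eqref{cc1},
$$d\,\tfrac{\alpha-2}{2}\int_{\R^d}G(u)\,dx\le (s_1+\sigma_2)|\nabla_{s_1}u|_2^2+(s_2+\sigma_2)|\nabla_{s_2}u|_2^2.$$
Inserting this into $J(u)$ and absorbing the potential term through $(V_1)$, namely $\tfrac12\int_{\R^d}V(x)u^2dx\ge -\tfrac{\sigma_1}{2}(|\nabla_{s_1}u|_2^2+|\nabla_{s_2}u|_2^2)$, I reach precisely \eqref{cc2}, that is
$$J(u)\ge\Big(\tfrac{1-\sigma_1}{2}-\tfrac{2(s_1+\sigma_2)}{d(\alpha-2)}\Big)|\nabla_{s_1}u|_2^2+\Big(\tfrac{1-\sigma_1}{2}-\tfrac{2(s_2+\sigma_2)}{d(\alpha-2)}\Big)|\nabla_{s_2}u|_2^2.$$
I would then set $C_0:=\min\Big\{\tfrac{1-\sigma_1}{2}-\tfrac{2(s_1+\sigma_2)}{d(\alpha-2)},\ \tfrac{1-\sigma_1}{2}-\tfrac{2(s_2+\sigma_2)}{d(\alpha-2)}\Big\}$, which is strictly positive by \eqref{cc3}.

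I do not expect any genuine obstacle here: the only quantitative input is the positivity $C_0>0$, which is exactly \eqref{cc3} and is forced by the smallness conditions on $\sigma_1$ in $(V_1)$ and on $\sigma_2$ in $(V_2)$ (in particular $\sigma_2<\tfrac{d(\alpha-2)(1-\sigma_1)}{4}-s_2$ keeps the $s_2$-coefficient positive). The entire content of the corollary is thus that the lower bound established in Corollary \ref{cor1}(ii) actually grows linearly in the squared Gagliardo seminorms, so that $J|_{\mathcal{P}_a}$ diverges as those seminorms do. This yields the stated coercivity.
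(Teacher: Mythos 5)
Your proposal is correct and matches the paper's own proof, which likewise invokes the chain of inequalities \eqref{cc1}--\eqref{cc3} from Corollary \ref{cor1} to get $J(u)\geq C_0\left(|\nabla_{s_1}u|_{2}^{2}+|\nabla_{s_2}u|_{2}^{2}\right)$ on $\mathcal{P}_a$ with $C_0>0$, whence coercivity is immediate. Your only refinement is stating the constant explicitly as the minimum of the two coefficients (the $s_2$ one, since $s_1<s_2$), which is slightly more careful than the paper's wording but not a different argument.
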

\begin{proof}
 By the same argument used in the proof of Corollary \ref{cor1}, we
have
\begin{align*}
J(u)\geq
\left(\frac{1-\sigma_1}{2}-\frac{2(s_1+\sigma_2)}{d(\alpha-2)}\right)\left(|\nabla_{s_{1}}u|_{2}^{2}+|\nabla_{s_{2}}u|_{2}^{2}\right)
\rightarrow +\infty,
\end{align*}
as  $u\in \mathcal{P}_{a}$ and $|\nabla_{s_1}
u|_{2}^{2}+|\nabla_{s_2} u|_{2}^{2}\rightarrow
\infty$.\\
 This ends the proof.
\end{proof}

\subsection{Proof of  Theorem \ref{mainnthm} concluded}

Now, we are ready to conclude the proof of Theorem \ref{mainnthm}. We
start by proving point $(1)$, that is, the nonexistence of
normalized solution.
\begin{Pro}\label{mainthm}
Suppose that assumptions $(G_1)-(G_3)$ and $(V_1)-(V_3)$ are
satisfied.  Then, problem \eqref{maineq} has no nontrivial solution
$u\in H^{s_1,s_2}(\R^{d})$ for $\lambda \leq 0$.
\end{Pro}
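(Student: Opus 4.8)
The plan is to argue by contradiction. Suppose $u\in H^{s_1,s_2}(\R^d)$ is a \emph{nontrivial} solution of \eqref{maineq} with $\lambda\le 0$; I will show this forces $u\equiv 0$. The two ingredients are already recorded in the proof of Lemma \ref{P}: testing \eqref{maineq} against $u$ gives \eqref{p1}, and the Pohozaev identity (testing against $x\cdot\nabla u$) gives \eqref{p2}. To keep the bookkeeping transparent I would write $A_i:=|\nabla_{s_i}u|_2^2\ge 0$ and $B:=|u|_2^2>0$ (positive since $u\neq 0$), and keep $\int_{\R^d}Vu^2\,dx$ and $\int_{\R^d}Wu^2\,dx$ as they stand, where $W=\tfrac12\langle\nabla V,x\rangle$ as in $(V_2)$.

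First I would solve \eqref{p2} for $d\int_{\R^d}G(u)\,dx$ and couple it with \eqref{p1}. From \eqref{p1} one has $\int_{\R^d}g(u)u\,dx=A_1+A_2+\int_{\R^d}Vu^2\,dx+\lambda B$, and the upper bound $g(u)u\le\beta G(u)$ of $(G_2)$ gives, after integrating, $\int_{\R^d}g(u)u\,dx\le \beta\int_{\R^d}G(u)\,dx$, i.e. $\int_{\R^d}G(u)\,dx\ge\tfrac1\beta\int_{\R^d}g(u)u\,dx$. Substituting the expression for $d\int_{\R^d}G(u)\,dx$ coming from \eqref{p2} and rearranging yields, with $c_i:=\tfrac{d}{2}-s_i-\tfrac{d}{\beta}$,
\begin{align*}
&\Big(\tfrac{d}{2}-s_1-\tfrac{d}{\beta}\Big)A_1 + \Big(\tfrac{d}{2}-s_2-\tfrac{d}{\beta}\Big)A_2 \\
&\quad + \Big(\tfrac{d}{2}-\tfrac{d}{\beta}\Big)\Big(\int_{\R^d} V u^2\,dx + \lambda B\Big) + \int_{\R^d} W u^2\,dx \ge 0.
\end{align*}

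Now comes the decisive sign analysis. Because $\beta<\tfrac{2d}{d-2s_1}$ from $(G_2)$, one checks $\tfrac{d}{\beta}>\tfrac{d-2s_1}{2}$, so $c_1,c_2<0$ with $|c_i|=s_i-\tfrac{(\beta-2)d}{2\beta}$ and $0<|c_1|<|c_2|$; moreover $\tfrac{d}{2}-\tfrac{d}{\beta}=\tfrac{(\beta-2)d}{2\beta}>0$. Since $\lambda\le 0$ gives $\lambda B\le 0$, and $(V_1)$ gives $\sup V=0$, hence $V\le 0$ and $\int_{\R^d}Vu^2\,dx\le 0$, the middle term above is $\le 0$. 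Dropping it and moving the seminorm terms to the right gives
\begin{align*}
\int_{\R^d} W u^2\,dx \;\ge\; |c_1|\,A_1 + |c_2|\,A_2 \;\ge\; 0.
\end{align*}

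Finally I would invoke $(V_2)$. Since the left-hand side is nonnegative, $\int_{\R^d}Wu^2\,dx=\big|\int_{\R^d}Wu^2\,dx\big|\le\sigma_2(A_1+A_2)$, so $(|c_1|-\sigma_2)A_1+(|c_2|-\sigma_2)A_2\le 0$. The hypothesis $\sigma_2<s_1-\tfrac{(\beta-2)d}{2\beta}=|c_1|\le|c_2|$ built into $(V_2)$ makes both coefficients strictly positive, forcing $A_1=A_2=0$; in particular $|\nabla_{s_1}u|_2=0$, so $u$ is a.e.\ constant, and the only constant in $L^2(\R^d)$ is $0$, whence $u\equiv 0$ — contradicting nontriviality. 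The main obstacle is exactly this last chain of sign estimates: one must verify that the two structural upper bounds, $\beta<\tfrac{2d}{d-2s_1}$ in $(G_2)$ and $\sigma_2<s_1-\tfrac{(\beta-2)d}{2\beta}$ in $(V_2)$, conspire to make each $|c_i|-\sigma_2$ positive, while $\lambda\le 0$ and $V\le 0$ serve only to kill the mixed $(\,\int Vu^2+\lambda B\,)$ term. Note that only $(G_1)$–$(G_2)$ and $(V_1)$–$(V_2)$ are actually used; the monotonicity hypotheses $(G_3)$ and $(V_3)$ play no role in the nonexistence part.
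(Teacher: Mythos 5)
Your proof is correct and takes essentially the same route as the paper: both combine the identity obtained by testing against $u$ (\eqref{p1}) with the Pohozaev identity (\eqref{p2}), and then use $g(u)u\le\beta G(u)$ from $(G_2)$, $V\le 0$ from $(V_1)$, and the bound $\left|\int_{\R^d}Wu^2\,dx\right|\le\sigma_2\left(|\nabla_{s_1}u|_2^2+|\nabla_{s_2}u|_2^2\right)$ together with the key condition $\sigma_2<s_1-\frac{(\beta-2)d}{2\beta}$ from $(V_2)$. The only cosmetic difference is the final elimination: you conclude that the Gagliardo seminorms vanish (hence $u$ is a constant, so $u\equiv 0$ in $L^2$), whereas the paper isolates $(s_1-\sigma_2)\lambda|u|_2^2\ge\tau\int_{\R^d}G(u)\,dx$ with $\tau=d-\left(\sigma_2+\frac{d-2s_1}{2}\right)\beta>0$ and concludes $\int_{\R^d}G(u)\,dx=0$; both hinge on exactly the same sign condition.
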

\begin{proof}
Let $u$ be a solution of \eqref{maineq}. Thus, multiplying
\eqref{maineq} by $u$ and $x\cdot \nabla u$ respectively, and then
integrating by parts, we get
 \begin{equation}\label{p1}
|\nabla_{s_{1}}u|_{2}^{2}+|\nabla_{s_{2}}u|_{2}^{2}+\displaystyle
\int_{\mathbb{R}^{d}}V(x)|u|^{2}dx+\lambda
|u|_{2}^{2}-\int_{\mathbb{R}^{d}}g(u)udx=0
 \end{equation}
 and
 \begin{equation}\label{p2}
\frac{2s_1-d}{2}|\nabla_{s_{1}}u|_{2}^{2}+\frac{2s_2-d}{2}|\nabla_{s_{2}}u|_{2}^{2}-\frac{d}{2}\int_{\mathbb{R}^{d}}V(x)|u|^{2}dx-\int_{\mathbb{R}^{d}}W(x)
u^{2}dx-\frac{d\lambda}{2}|u|_{2}^{2}+d\int_{\mathbb{R}^{d}}G(u)dx=0.
 \end{equation}
 From \eqref{p2}, we obtain
 \begin{align*}
 \frac{2s_1-d}{2}|\nabla_{s_{1}}u|_{2}^{2}+\frac{2s_2-d}{2}|\nabla_{s_{2}}u|_{2}^{2}
 &=\frac{d}{2}\int_{\mathbb{R}^{d}}V(x)|u|^{2}dx+\int_{\mathbb{R}^{d}}W(x)
|u|^{2}dx+\frac{d\lambda}{2}|u|_{2}^{2}-d\int_{\mathbb{R}^{d}}G(u)dx\\
&\leq
\sigma_2|\nabla_{s_{1}}u|_{2}^{2}+\sigma_2|\nabla_{s_{2}}u|_{2}^{2}+\frac{d}{2}\int_{\mathbb{R}^{d}}V(x)|u|^{2}dx+\frac{d\lambda}{2}|u|_{2}^{2}
-d\int_{\mathbb{R}^{d}}G(u)dx.
 \end{align*}
 Thus,
 \[\frac{d\lambda}{2}|u|_{2}^{2}\geq d\int_{\mathbb{R}^{d}}G(u)dx-\left(\sigma_2+\frac{d-2s_1}{2}\right)|\nabla_{s_{1}}u|_{2}^{2}
 -\left(\sigma_2+\frac{d-2s_1}{2}\right)|\nabla_{s_{2}}u|_{2}^{2}
 +\frac{d}{2}\int_{\mathbb{R}^{d}}V(x)|u|^{2}dx.\]
By \eqref{p1}, we obtain
 \begin{align*}
 \frac{d\lambda}{2}|u|_{2}^{2}\geq d\int_{\mathbb{R}^{d}}G(u)dx+(s_2-s_1)|\nabla_{s_{2}}u|_{2}^{2}-(s_1-\sigma_2)\int_{\mathbb{R}^{d}}V(x)|u|^{2}dx
 \\+ \left(\sigma_2+\frac{d-2s_1}{2}\right)\lambda |u|_{2}^{2}
 -\left(\sigma_2+\frac{d-2s_1}{2}\right)\int_{\mathbb{R}^{d}}g(u)udx,
 \end{align*}
 Then by $\int_{\mathbb{R}^{d}}V(x)|u|^{2}dx\leq 0$, we have
  \begin{align*}
(s_1-\sigma_2)\lambda|u|_{2}^{2}\geq d\int_{\mathbb{R}^{d}}G(u)dx
 -\left(\sigma_2+\frac{d-2s_1}{2}\right)\int_{\mathbb{R}^{d}}g(u)udx\geq \tau\int_{\mathbb{R}^{d}}G(u)dx,
 \end{align*}
 where $\tau= d-\left(\sigma_2+\frac{d-2s_1}{2}\right)\beta $. By $\sigma_2<s_1-\frac{(\beta-2)d}{2\beta}$, we have $\tau>0$. Therefore, if $\lambda\leq 0$, we have $\int_{\mathbb{R}^{d}}G(u)dx=0$, then $u$ is trivial.
\end{proof}

 \begin{Rek}
Although there are only trivial solution $u\in H^{s_1,s_2}$ of
\eqref{maineq} provided $\lambda\leq 0$. There may be some
nontrivial solutions in $W^{s_1,p}\cap W^{s_2,q}$, for some $p,q\in
(2,\frac{2d}{d-2s_1})$. For example, \cite{Evequoz} showed the
existence result for nonlinear Helmholtz equation by dual
variational methods.
 \end{Rek}

Now, recall that from Corollary \ref{cor1}, we have
\[C_a:=\inf\limits_{u\in\mathcal{P}_a}J(u)=\inf\limits_{u\in S_a}\max\limits_{t>0}
J(t*u)>0.\]
 We consider the following constrained minimizing problem:
\begin{align*}
\inf\{  J(u): |u|_2^2=a, P(u)=0\}.
\end{align*}
Let $\{u_n\}\subset \mathcal{P}_a$ be a minimal sequence of $J$,
i.e. $J(u_n)\rightarrow C_a>0$.\\
\textbf{Claim $1$.} We have $$C_a<m_a.$$ Indeed, invoking Theorem
\ref{attained}, we can assume that $m_a$ is attained by $v_a\in
S_a$. Therefore, in view of $(V_1)$, one can see that there exists
$C>0$ and a domain $\Omega \subset \R^d$, such that,
\[-\int_\Omega V\left(\frac{x}{t_{v_{a}}}\right)|v_a|^2(x)dx\geq C|\Omega|>0,\]
where $t_{v_{a}}$ is given in Corollary \ref{cor1}. Thus, again by
Corollary \ref{cor1}, we infer that
\begin{align*}
C_a&\leq \max_{t>0} J(t*v_a)=J(t_{v_a}*v_a)=I(t_{v_a}*v_a)+\frac{1}{2}\int_{\R^d}V\left(\frac{x}{t_{v_{a}}}\right)|v_a|^2(x)dx\\
&<I(t_{v_a}*v_a)+\frac{1}{2}\int_{\Omega}V(x)|v_a|^2(x)dx
<I(t_{v_a}*v_a)-C|\Omega|\\
&\leq \max_{t>0} I(t*v_a)=I(v_a)=m_a,
\end{align*}
which implies that $C_a<m_a.$ \\
From Corollary \ref{cor2}, we know that $\{u_n\}$ is bounded in
$H^{s_1,s_2}(\R^{d})$, and so, $u_n\rightharpoonup u$ in
$H^{s_1,s_2}(\R^d)$.\\
\textbf{Claim $2$.} The weak limit $u$ is nontrivial, that is, $u\neq 0$.\\
If we assume that $u=0$, we would have by
Br$\acute{\text{e}}$zis-Lieb lemma and assumption $(V_1)$, that
\begin{align*}
I(u_n)=C_a+o(1),\ \
\Psi'_{\infty,u_n}(1)=o(1).
\end{align*}
Then by the uniqueness, there exists $t_n=1+o(1)$, such that
\[t_n*u_n\in \mathcal{P}_{\infty, a}.\]
Thus, we have
\[m_a\leq I(t_n *u_n)=I(u_n)+o(1)=C_a+o(1),\]
which reach to a contradiction with claim $1$. Hence, $ u\neq 0$.\\
From the fact that $\{u_n\}$ is bounded in $H^{s_1,s_2}(\R^d)$, and
$I(u_n)$ is bounded in $\R$, we can deduce that
$\lambda_n:=-\frac{\langle J'(u_n),u_n\rangle}{a}$ is also bounded
in $\R$. Then, we can assume that for some subsequence
$\lambda_{n}\rightarrow \lambda_a\in \R$.\\
\textbf{Claim $3$.} We must have $\lambda_a>0$.\\
 Since $\{u_n\}\subset
\mathcal{P}_a$, we have $P(u_n)=0$, and so,
\begin{align}\label{1}
s_1|\nabla_{s_1}u_n|_2^2+s_2|\nabla_{s_2}u_n|_2^2-
\int_{\R^d}W(x)|u_n|^2dx-d\int_{\R^d}\Big(\frac{1}{2}g(u_n)u_n-G(u_n)\Big)dx=0
\end{align}
On the other hand, in light of $(V_2)$, one has
\begin{align}\label{2}
\frac{\beta-2}{2}d\int_{\R^d} G(u_n)dx&\geq d\int_{\R^d}\Big(\frac{1}{2}g(u_n)u_n-G(u_n)\Big)dx=d\int_{\R^d}\widetilde{G}(u_n)dx\\
&=s_1|\nabla_{s_1}u_n|_2^2+s_2|\nabla_{s_2}u_n|_2^2- \int_{\R^d}W(x)|u_n|^2dx\nonumber\\
&\geq (s_1-\sigma_2)
|\nabla_{s_1}u_n|_2^2+(s_2-\sigma_2)|\nabla_{s_2}u_n|_2^2.\nonumber
\end{align}
Consequently, by combining \eqref{1} and \eqref{2}, we obtain
\begin{align}\label{3}
\lambda_n|u_n|_2^2&=-\langle J'(u_n),u_n\rangle = -|\nabla_{s_1}u_n|_2^2-|\nabla_{s_2}u_n|_2^2-\int_{\R^d}V(x)u_n^2dx+\int_{\R^d}g(u_n)u_ndx\\
&\geq -|\nabla_{s_1}u_n|_2^2-|\nabla_{s_2}u_n|_2^2+\int_{\R^d}g(u_n)u_ndx\nonumber\\
&=\frac{s_2-s_1}{s_1}|\nabla_{s_2}u_n|_2^2-\frac{1}{s_1}\int_{\R^d}W(x)|u_n|^2dx-\frac{d}{s_1}\int_{\R^d}\widetilde{G}(u_n)dx+\int_{\R^d}g(u_n)u_ndx\nonumber\\
&\geq \frac{s_2-s_1}{s_1}|\nabla_{s_2}u_n|_2^2+\Big(\frac{d}{s_1}-\frac{d-2s_1}{2s_1}\beta\Big)\int_{\R^d} G(u_n)dx-\frac{\sigma_2}{s_1}(|\nabla_{s_1}u_n|_2^2+|\nabla_{s_2}u_n|_2^2)\nonumber\\
&\geq
\Big(\frac{2\beta(s_1-\sigma_2)}{(\beta-2)d}-1\Big)|\nabla_{s_1}u_n|_2^2+
\Big(\frac{2\beta(s_2-\sigma_2)}{(\beta-2)d}-1\Big)|\nabla_{s_2}u_n|_2^2.\nonumber
\end{align}
Recall that from assumption $(V_2)$, we have
$$\sigma_2<s_1-\frac{(\beta-2)d}{2\beta}<s_2-\frac{(\beta-2)d}{2\beta}.$$
It follows, from this fact and \eqref{3}, that
\[\lambda_n|u_n|_2^2\geq C(|\nabla_{s_1}u_n|_2^2+|\nabla_{s_2}u_n|_2^2),\]
for some $C>0$. Thus, in light of Lemma \ref{lem4.2} and claim $2$,
there exists $ \delta>0$ such that $\lambda_n a>\delta$ for all
$n\in \N$. This implies that, up to a subsequence, we may assume
$\lambda_n\rightarrow \lambda_a>0$. This proves claim $3$.\\
 Consequently, $u$ solves the following equation
$$(-\Delta)^{s_1}u(x)+(-\Delta)^{s_2}u(x)+\lambda_a u(x)+V(x)u(x)=g(u(x)).$$
\textbf{Claim $4$.} We have $J(u)>0.$\\
 By $(V_2)$, $(G_2)$, $(G_3)$ and
Lemma \ref{P}, one has
\begin{align}\label{4}
(s_1+\sigma_2)|\nabla_{s_1}u |_2^2+(s_2+\sigma_2)|\nabla_{s_2}u |_2^2&\geq s_1|\nabla_{s_1}u |_2^2+s_2|\nabla_{s_2}u |_2^2-\int_{\R^d}W(x)|u(x)|^2dx\\
&=d\int_{\R^d}\widetilde{G}(u)dx\nonumber\\
&\geq \frac{d(\alpha-2)}{2}\int_{\R^d}G(u)dx.\nonumber
\end{align}
Again, from $(V_1)$ and $(V_2)$, we can remark that
\begin{equation}\label{5}
\frac{1-\sigma_1}{2}>\frac{2(s_2+\sigma_2)}{d(\alpha-2)}>\frac{2(s_1+\sigma_2)}{d(\alpha-2)}.
\end{equation}
Then, in view of \eqref{4}, \eqref{5} and $(V_1)$ , we obtain
\begin{align*}
J(u)&=\frac{1}{2}|\nabla_{s_1}u|_2^2+\frac{1}{2}|\nabla_{s_2}u|_2^2+\frac{1}{2}\int_{\R^d}V(x)u^2dx-\int_{\R^d}G(u)dx\\
&\geq \frac{1-\sigma_1}{2}(|\nabla_{s_1}u|_2^2+ |\nabla_{s_2}u|_2^2)-\int_{\R^d}G(u)dx\\
&\geq
\Big(\frac{1-\sigma_1}{2}-\frac{2(s_1+\sigma_2)}{d(\alpha-2)}\Big)|\nabla_{s_1}u|_2^2
+\Big(\frac{1-\sigma_1}{2}-\frac{2(s_2+\sigma_2)}{d(\alpha-2)}\Big)|\nabla_{s_2}u|_2^2.
\end{align*}
This proves, from claim $2$ and Lemma
\ref{lem4.2} that $J(u)>0$. \\
\textbf{Claim $5$.} We show that $|u|_2^2=a$.\\ Assume that
$b:=|u|_2^2 \neq a$. Put $c:=a-b$, due to the norm is weakly lower
semi-continuous, we have $c\in (0,a)$. Set $\phi_n:=u_n-u$. Then, by
$(V_1)$, we get
\[\int_{\R^d} V(x)|\phi_n|^2dx\rightarrow 0, \ \text{as}\ n\rightarrow \infty.\]
Moreover, by using the Br$\acute{\text{e}}$zis-Lieb lemma, we have $|\phi_n|_2^2=c+o(1)$, $I(\phi_n)+J(u)=C_a+o(1)$, and $\Psi_{\infty, \phi_n}'(1)=o(1)$. If $\liminf\limits_{n\rightarrow \infty}
(|\nabla_{s_1}\phi_n|_2^2+|\nabla_{s_2}\phi_n|_2^2)=0$, then by $$s_1|\nabla_{s_1}\phi_n|_2^2+s_2|\nabla_{s_2}\phi_n|_2^2=d\int_{\R^d}\widetilde{G}(\phi_n)dx+o(1),$$ we have $\liminf\limits_{n\rightarrow \infty}\int_{\R^d}\widetilde{G}(\phi_n)dx=0$. Similar to \eqref{3}, by using the Br$\acute{\text{e}}$zis-Lieb lemma, we have
\[\lambda_n |u_n|_2^2-\lambda_a|u|_2^2= -|\nabla_{s_1}\phi_n|_2^2-|\nabla_{s_2}\phi_n|_2^2 +\int_{\R^d}g(\phi_n)\phi_ndx+o(1).\]
Thus, we have $\lambda_a c=0$, which is impossible. Therefore, we have
\[\liminf_{n\rightarrow \infty}
(|\nabla_{s_1}\phi_n|_2^2+|\nabla_{s_2}\phi_n|_2^2)>0.\] By the uniqueness, there exists $t_n=1+o(1)$ such
that $t_n*\phi_n\in \mathcal{P}_{\infty, |\phi_n|_2^2}$. Then, by
Lemmas \ref{decc} and \ref{con}, we have
\[m_{|\phi_n|_2^2}\leq I(t_n*\phi_n)=I(\phi_n)+o(1)=C_a-J(u)+o(1).\]
Therefore,
\[m_a\leq m_c\leq \lim\limits_{n\rightarrow \infty} m_{|\phi_n|_2^2}\leq C_a-J(u)\leq m_a-J(u).\]
Hence, using claim $4$, we reach to a contradiction.\\
Claim $5$ leads to the fact that  $u_n\rightarrow u$ in $L^2(\R^d)$,
and $u\in \mathcal{P}_a$.  By the fractional Gagliardo-Nirenberg inequality, see \cite{hajeiej2}, we have
\[ |u_n-u|_p\leq B^\frac{A}{p}|\nabla_{s_1}u_n-\nabla_{s_1}u|_2^{\frac{A}{p}}\cdot |u_n-u|_2^{\frac{p-A}{p}},  \]
where $A=\frac{(p-2)d}{2s_1}$, $1\leq p<\frac{2d}{d-2s_1}$, and $B$ is the best constant give by
\[B=2^{-2s_1}\pi^{-s_1} \frac{\Gamma((d-2s_1)/2)}{\Gamma((d+2s_1)/2)}\left(\frac{\Gamma(d)}{\Gamma(d/2)}\right)^{2s_1/d}.\]
Since $2+\frac{4s_2}{d}<\alpha<\beta<\frac{2d}{d-2s_1}$, we have
 $\int_{\R^d}G(u_n)dx
\rightarrow \int_{\R^d}G(u)dx$.  Thus,
\[C_a\leq J(u)\leq \liminf_{n\rightarrow \infty} J(u_n)=C_a,\]
which implies $J(u)=C_a$. Moreover,
$|\nabla_{s_1}u_n|_2^2\rightarrow |\nabla_{s_1}u |_2^2$,
$|\nabla_{s_2}u_n|_2^2\rightarrow |\nabla_{s_2}u |_2^2$, as
$n\rightarrow \infty$. This shows that $u_n\rightarrow u$ in $H^{s_1,s_2}(\R^{d})$, and $u\in \mathcal{P}_a$ attains $C_a$.
\\
Then by Proposition \ref{p4.3}, there exists $\lambda\in
\R^{+}$ such that $(\lambda, u)$ solves
 \[(-\Delta)^{s_1}u(x)+(-\Delta)^{s_2}u(x)+\lambda u(x)+V(x)u(x)=g(u(x)),\]
with $\int_{\mathbb{R}^d} |u(x)|^2dx=a$.\\
This ends the proof.

\section{Proof of Theorem \ref{mainnthm2}}
We show the proof of Theorem \ref{mainnthm2} in this section. Fist, we introduce some notations here.
Recall that
\[J(u)=\frac{1}{2}|\nabla_{s_1}u|_2^2+\frac{1}{2}|\nabla_{s_2}u|_2^2+\frac{1}{2}\int_{\R^d}V(x)u^2dx-\int_{\R^d}G(u)dx.\]
This functional is defined on the following Hilbert space
\[H_V^{s_1,s_2}(\R^d):=\{u\in H^{s_1,s_2}(\R^d): \int_{\R^d} V(x)|u|^2dx<\infty\},\]
with the norm
\[\|u\|_V^2:=\int_{\R^d}\left(|\nabla_{s_1}u|^2+|\nabla_{s_2}u|^2+ V(x)|u|^2\right)dx,\]
which is induced by the following inner product
\[\langle u,v\rangle_V:=\frac{1}{2}\sum\limits_{i=1}^{2}\int_{\R^d\times \R^d} \frac{(u(x)-u(y))v(x)}{|x-y|^{d+2s_i}}dxdy+\int_{\R^d}  V(x)u(x)  v(x) dx,\]
Thus we have
\[J(u)=\frac{1}{2}\|u\|_V-\int_{\R^d}G(u)dx, \ J'(u)v=\langle u,v\rangle_V-\int_{\R^d}g(u)vdx.\]

\begin{Lem}\label{cptemb}
Assume $(V_1')$, $(V_2')$ are satisfied, then $H_V^{s_1,s_2}(\R^d)$ is continuously embedded in $L^p(\R^d)$, for any $p\in [2,2^*_{s_2}]$. Moreover, $H_V^{s_1,s_2}(\R^d)$ is compactly embedded in $L^p(\R^d)$, for any $p\in [2,2^*_{s_2})$.
\end{Lem}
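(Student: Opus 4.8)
The plan is to handle the two assertions in turn: the continuous embedding follows by comparing $\|\cdot\|_V$ with the standard norm, while the compactness is extracted from the coercivity encoded in $(V'_2)$ together with the local compactness of the fractional Sobolev embedding. For the continuous embedding, note that $(V'_1)$ gives $V(x)\ge V_0>0$ for every $x$, so that
\[\|u\|_V^2=|\nabla_{s_1}u|_2^2+|\nabla_{s_2}u|_2^2+\int_{\R^d}V(x)|u|^2dx\ge \min\{1,V_0\}\,\|u\|^2.\]
Hence $H_V^{s_1,s_2}(\R^d)$ is continuously embedded in $H^{s_1,s_2}(\R^d)$, and composing with the embedding $H^{s_1,s_2}(\R^d)\subset L^p(\R^d)$ for $p\in[2,2^*_{s_2}]$ recalled in Section $2$ yields the first claim.

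For the compact embedding, let $\{u_n\}$ be bounded in $H_V^{s_1,s_2}(\R^d)$, say $C:=\sup_n\|u_n\|_V^2<\infty$. By the continuous embedding it is bounded in $H^{s_1,s_2}(\R^d)$, so up to a subsequence $u_n\rightharpoonup u$. I would first prove $u_n\to u$ in $L^2(\R^d)$ and then pass to $L^p$ for $p\in(2,2^*_{s_2})$ by interpolation. The $L^2$-convergence splits into a local part and a tail part. On each ball $B_R$ the sequence enjoys the compact Rellich-type embedding of the fractional Sobolev space into $L^2(B_R)$ valid on bounded domains, so a diagonal argument over $R\to\infty$ gives $u_n\to u$ in $L^2(B_R)$ for every $R$.

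The decisive step, and the one I expect to be the main obstacle, is the uniform smallness of the tails, which is exactly where $(V'_2)$ enters. Given $\varepsilon>0$, choose $M>0$ with $C/M<\varepsilon$. On $\{V>M\}$ one controls the mass directly by the potential,
\[\int_{\{V>M\}}|u_n|^2dx\le \frac{1}{M}\int_{\R^d}V(x)|u_n|^2dx\le \frac{C}{M}<\varepsilon,\]
uniformly in $n$. On the complementary set, assumption $(V'_2)$ ensures $\mu(\{V\le M\})<\infty$, hence $\mu(\{V\le M\}\setminus B_R)\to 0$ as $R\to\infty$; using the boundedness of $\{u_n\}$ in $L^{2^*_{s_2}}(\R^d)$ together with Hölder's inequality,
\[\int_{\{V\le M\}\setminus B_R}|u_n|^2dx\le |u_n|_{2^*_{s_2}}^2\,\mu\!\left(\{V\le M\}\setminus B_R\right)^{1-2/2^*_{s_2}},\]
which is uniformly small once $R$ is large. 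Therefore $\int_{\R^d\setminus B_R}|u_n|^2dx$ is uniformly small; since $u\in L^2(\R^d)$ its own tail is small as well, and combining this with the local convergence forces $u_n\to u$ in $L^2(\R^d)$. Finally, for $p\in(2,2^*_{s_2})$ I would interpolate, writing $\tfrac1p=\tfrac{\theta}{2}+\tfrac{1-\theta}{2^*_{s_2}}$ with $\theta\in(0,1)$, so that
\[|u_n-u|_p\le |u_n-u|_2^{\theta}\,|u_n-u|_{2^*_{s_2}}^{1-\theta}\longrightarrow 0,\]
because the first factor tends to $0$ by the $L^2$-convergence just established while the second stays bounded by the continuous embedding. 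This delivers the compact embedding into $L^p(\R^d)$ for every $p\in[2,2^*_{s_2})$ and completes the argument.
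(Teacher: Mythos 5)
Your proof is correct. Note that the paper states this lemma without giving any proof at all, so there is no argument of the authors to compare against; your write-up supplies exactly the standard reasoning such a statement implicitly relies on: the continuous embedding from $V\ge V_0>0$, the tail splitting into $\{V>M\}$ (controlled by $\frac{1}{M}\int V|u_n|^2\,dx$) and $\{V\le M\}\setminus B_R$ (controlled via H\"older and the finite-measure hypothesis $(V'_2)$), local Rellich compactness of the fractional embedding on balls, and interpolation between $L^2$ and $L^{2^*_{s_2}}$ to reach all $p\in(2,2^*_{s_2})$. This is the classical Bartsch--Wang-type compactness argument for coercive potentials, correctly adapted to the mixed fractional setting, and it fills a genuine gap in the paper's exposition.
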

Equivalently, we consider the following constrained minimizing problem:
\[E_a=\inf\{J(u):u\in S_a\}.\]
Let $\{u_n\}\subset S_a$ be a minimizing sequence of $J$ with respect to $E_a$, i.e. $J(u_n)\rightarrow E_a$, and $|u_n|_2^2=a$. \\
\textbf{Claim $1$.} $\{u_n\}$ is bounded in $H_V^{s_1,s_2}(\R^d)$.
\\
Thus, going to a subsequence if necessary, $u_n\rightharpoonup u$ in
$H_V^{s_1,s_2}(\R^d)$. By Lemma \ref{cptemb}, we can extract a subsequence such that $u_n\rightarrow u$ in $L^p(\R^d)$, $2\leq p<2^*_{s_2}$. \\
\textbf{Claim $2$.}
$\lim\limits_{n \rightarrow +\infty} \int_{\R^{d}}G(u_n)dx=\int_{\R^{d}}G(u)dx,$
and
$\lim\limits_{n \rightarrow +\infty} \int_{\R^{d}}g(u_n) u_ndx=\int_{\R^{d}}g(u)udx.$
\\
Therefore, we have
\[J(u)\leq \liminf_{n\rightarrow \infty}J(u_n)=E_{a },\ |u|_2^2=\lim_{n\rightarrow \infty}|u_n|_2^2=a.\]
Thus, $u\in S_a$, and $J(u)=E_a$, which means $E_a$ is attained. Let
$$\lambda_n=-\frac{1}{a}\langle J'(u_n),u_n\rangle=\frac{1}{a}\left(\int_{\R^d} g(u_n)u_n dx-\|u_n\|_V\right).$$
Similar to the proof of Theorem \ref{mainnthm}, we have $\{\lambda_n\}$ is bounded and for some subsequence $\lambda_n\rightarrow \lambda_a>0$. Take $\lambda=\lambda_a$, we have $(\lambda ,u)$ solves
 \[(-\Delta)^{s_1}u(x)+(-\Delta)^{s_2}u(x)+\lambda  u(x)+V(x)u(x)=g(u(x)),\]
with $\int_{\mathbb{R}^d} |u(x)|^2dx=a$.\\
This ends the proof.


\end{document}